\newtheorem{theorem}{Theorem}[section]
\newtheorem*{convention*}{Standing assumption}
\newtheorem{proposition}[theorem]{Proposition}
\newtheorem{lemma}[theorem]{Lemma}
\newtheorem{corollary}[theorem]{Corollary}
\theoremstyle{definition}
\newtheorem{definition}[theorem]{Definition}
\newtheorem{example}[theorem]{Example}
\newtheorem{remark}[theorem]{Remark}
\newtheorem{question}{Question}
\renewcommand{\a}{\alpha}
\renewcommand{\b}{\beta}
\newcommand{\e}{\varepsilon}
\newcommand{\cR}{\mathscr{R}}
\newcommand{\sU}{\mathscr{U}}
\newcommand{\cU}{\mathcal{U}}
\newcommand{\cC}{\mathscr{C}}
\newcommand{\cA}{\mathscr{A}}
\newcommand{\cB}{\mathscr{B}}
\newcommand{\cD}{\mathscr{D}}
\newcommand{\cP}{\mathcal{P}}
\newcommand{\cF}{\mathcal{F}}
\newcommand{\ZZ}{\mathbb{Z}}
\newcommand{\NN}{\mathbb{N}}
\newcommand{\RR}{\mathbb{R}}
\newcommand{\PP}{\mathbb{P}}
\newcommand{\bra}[1]{{\left(#1\right)}}
\newcommand{\brabig}[1]{\big( #1 \big)}
\newcommand{\braBig}[1]{\Big( #1 \Big)}
\newcommand{\bbra}[1]{ { \left\{ #1 \right\} } } 
\newcommand{\abs}[1]{\left|#1\right|}
\newcommand{\norm}[1]{\left\lVert #1 \right\rVert}
\newcommand{\ceil}[1]{\left\lceil #1 \right\rceil}
\newcommand{\floor}[1]{\left\lfloor #1 \right\rfloor}
\newcommand{\ffrac}[2]{#1/#2}
\DeclareMathOperator*{\EE}{{\mathbb{E}}}
\DeclareMathOperator{\id}{id}
\DeclareMathOperator{\Sym}{Sym}
\newcommand{\bb}
{\mathbf}
\newcommand{\cFm}{{ \mathcal{F}_{\! \curlywedge} }}
\newcommand{\uni}{{\mathrm{uni}}}
\newcommand{\f}{{f}}
\begin{document}

\author{Jakub Konieczny}
\title{On consecutive sums in permutations}

\begin{abstract}
 	We study the number of values taken by the sums $\sum_{i=u}^{v-1} a_i$, where $a_1,a_2,\dots,a_n$ is a permutation of $1,2,\dots,n$ and $1 \leq u < v \leq n+1$. In particular, we show that for a random choice of a permutation, with high probability there are $\bra{ \ffrac{(1+e^{-2})}{4} +o(1)} n^2$ such sums. This answers an old question of Erd\H{o}s and Harzheim. We also obtain non-trivial bounds on the maximum possible number of distinct sums, ranging over all permutations of $1,2,\dots,n$. We close with some questions concerning the minimal possible number of distinct sums.
\end{abstract}

\maketitle

\section{Introduction}

\subsection{} For an integer-valued sequence $a = (a_i)_{i=1}^n $, we let $S(a)$ denote the set of all sums of consecutive terms of $a$:
\[ S(a) = \left\{ \sum_{i = u}^{v-1} a_i \ : \ u,v \in \NN, \ 1 \leq u < v \leq n+1\right\}.\]
In this paper, we investigate the number of distinct consecutive sums, that is, the cardinality of $S(a)$. We are primarily interested in the case where $a \in \Sym([n])$, that is, $a$ is a permutation of the set $[n] := \{1,2,\dots,n\}$ for some integer $n \geq 1$. A trivial upper bound $\abs{S(a)} \leq \binom {n+1}{2}$ follows from counting the number of choices of $u$ and $v$ (or, incidentally, from computing $\max S(a) = \sum_{i=1}^n i$). Shorter sequences with maximal number of distinct consecutive sums were investigated by Hegyv{\'a}ri \cite{Hegyvari}, see subsection \ref{ssec:Hegyvari} for details. Distinct sums of pairs of consecutive entries were studied by Freud \cite{Freud-1983}.

Erd\H{o}s and Harzheim considered $\abs{S(a)}$ in special case where $a = \id_n$, meaning that $a_i = i$ for all $1 \leq i \leq n$ \cite{Erdos-1977-27}. They observed that  
\begin{equation}\label{eq:S=o(n2)}
|S(\id_n)|/n^2 \to 0 \text{ as } n \to \infty.
\end{equation}
	 Motivated by \eqref{eq:S=o(n2)}, Erd\H{o}s asked \cite{Erdos-1977-27} if the analogue of \eqref{eq:S=o(n2)} remains true for all permutations.
\begin{question}\label{question:Erdos-Harzheim}
	Is it true that for any $\e > 0$ there exists $n_0 = n_0(\e)$ such that for any $n \geq n_0$ and for any $a \in \Sym([n])$ we have $\abs{S(a)} \leq \e n^2$?
\end{question}	 

\subsection{} We remark that, because of the elementary formula 
\begin{equation}\label{eq:elementary}
\sum_{i=u}^{v-1} i = \frac{1}{2}(v-u)(v+u-1),
\end{equation}
the problem of estimating $\abs{S(\id_n)}$ is closely connected to the problem of estimating $\abs{ [n] \cdot [n]}$, which is known as the \emph{multiplication table problem} and has been extensively studied. Here, we use the standard notation $A \cdot B = \{ab \ : \ a \in A,\ b \in B\}$. To make this connection explicit, note that \eqref{eq:elementary} implies that ${S(\id_n)} \subset [2n] \cdot [2n]$. Conversely, an elementary computation shows that $S(\id_n) \supset \big[\floor{n/2}\big] \cdot \big[\floor{n/2}\big] \cap (2\NN+1)$. 

The first proof that $\abs{[n] \cdot [n] }/n^2 \to 0$ as $n \to \infty$ is due to Erd\H{o}s \cite{Erdos-1955}, with further quantitative improvements  by the same author \cite{Erdos-1960}. The exact asymptotics were obtained by Ford \cite{Ford-2008}, who showed that
\begin{equation}
\abs{ [n] \cdot [n] } = \Theta\bra{ n^2 (\log n)^{-\mathcal{E}} (\log \log n)^{-3/2}},
\end{equation}
	 where $\displaystyle \mathcal{E}=1-\frac{{1+\log \log 2}}{\log 2} = 0.086\dots$. A later result of the same author \cite{Ford-2019} shows that the same asymptotic remain valid if we restrict to odd integers:
\begin{equation}
\abs{ [n] \cdot [n] \cap (2\NN+1)} = \Theta\bra{ n^2 (\log n)^{-\mathcal{E}} (\log \log n)^{-3/2}}.
\end{equation}
It follows that
\begin{equation}
|S(\id_n)| = \Theta\bra{ n^2 (\log n)^{-\mathcal{E}} (\log \log n)^{-3/2}}.
\end{equation}

\subsection{} The first goal of this paper is to show that the answer to Question \ref{question:Erdos-Harzheim} is an emphatic \emph{``no''}. Without further ado, we present a simple counterexample.

\begin{proposition}\label{I:prop:cexple-basic}
	For any $n \geq 1$ there exists $a \in \Sym([n])$ such that $\abs{S(a)} \geq \frac{1}{4}n^2. $
\end{proposition}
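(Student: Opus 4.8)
The plan is to exhibit one explicit permutation and count distinct interval sums by a congruence argument modulo $n+1$. Take the ``zigzag'' permutation
\[
a = (1,\, n,\, 2,\, n-1,\, 3,\, n-2,\, \dots),
\]
that is, $a_i = (i+1)/2$ for odd $i$ and $a_i = n+1-i/2$ for even $i$. Set $P_0 = 0$ and $P_j = \sum_{i=1}^{j} a_i$, so that $S(a) = \{ P_v - P_u : 0 \le u < v \le n \}$, and it suffices to produce $\tfrac14 n^2$ distinct differences of partial sums. The point of the zigzag is that the entries in positions $2k-1$ and $2k$ sum to $k + (n+1-k) = n+1$, which yields the clean formulas $P_{2k} = k(n+1)$ and $P_{2k-1} = (k-1)(n+1) + k$.

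Next I would restrict attention to two families of intervals. An ``odd-to-odd'' interval $\{2j-1,\dots,2k-1\}$ has sum $P_{2k-1} - P_{2j-2} = (k-j)(n+1) + k$, and an ``even-to-even'' interval $\{2j,\dots,2k\}$ has sum $P_{2k} - P_{2j-1} = (k-j+1)(n+1) - j$. Assume first $n = 2m$, and let $j,k$ range over $1 \le j \le k \le m$. Reducing modulo $n+1$, an odd-to-odd sum is congruent to $k \in \{1,\dots,m\}$ and an even-to-even sum is congruent to $-j$, i.e. to $n+1-j \in \{m+1,\dots,2m\}$; these residue ranges are disjoint, so the two families never coincide, and within a single residue class the quotient by $n+1$ recovers the remaining parameter, so each family consists of pairwise distinct sums. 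Hence $|S(a)| \ge 2\binom{m+1}{2} = m(m+1) = \tfrac14 n(n+2) \ge \tfrac14 n^2$.

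For odd $n = 2m+1$ the same permutation works: there is an additional odd position $2m+1$ with $P_{2m+1} = m(n+1) + (m+1)$, the odd-to-odd family now runs up to $k = m+1$ and occupies the extra residue class $m+1$ — still disjoint from the even-to-even classes $\{m+2,\dots,2m+1\}$ — and recounting gives $|S(a)| \ge \binom{m+2}{2} + \binom{m+1}{2} = (m+1)^2 > \tfrac14 n^2$. I expect the only real work to be bookkeeping: checking that each family is genuinely collision-free (the reduction mod $n+1$ makes this transparent, since the residue pins down one index and the quotient the other) and that the two families are disjoint, together with the minor parity split. Conceptually there is no obstacle once the zigzag permutation is singled out — everything flows from the two-line partial-sum identity — which is presumably why the author advertises it as ``the simplest example''.
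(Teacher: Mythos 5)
Your proof is correct and uses essentially the same argument as the paper: the same zigzag permutation, the same restriction to intervals with endpoints of equal parity, and the same observation that the sum determines $(l,k)$ (equivalently, the quotient and residue mod $n+1$), which in turn determine the interval. The paper phrases the uniqueness more tersely while you make the residue-class disjointness and parity bookkeeping explicit, but there is no substantive difference.
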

\begin{proof}
	Let $a \in \Sym([n])$ be the permutation $1,n,2,n-1,3,n-2,\dots$, that is, $a_{i} = (i+1)/2$ if $i$ is odd and $a_{i} = n+1-i/2$ if $i$ is even. In particular, for each odd $i$ with $1 \leq i < n$ we have $a_i + a_{i+1} = n+1$. 
	
	Let $\widetilde S \subset S(a)$ be the set of the consecutive sums with odd length, that is
\[ \tilde S = \left\{ \sum_{i = u}^{v-1} a_i \ : \ 1 \leq u < v \leq n+1,\ v-u \equiv 1 \bmod 2 \right\}.\]
	We will show that each $s \in \tilde S$ has a unique representation as $s = \sum_{i = u}^{v-1} a_i$ where $1 \leq u < v \leq n+1$ and $v-u \equiv 1 \bmod 2$, and hence 
	\[\abs{S(a)} \geq |\tilde S| =  \ceil{\frac{n+1}{2} }\cdot \floor{ \frac{n+1}{2} } \geq \frac{n^2}{4}.\]
Pick any $s \in \tilde S$. For each representation $s = \sum_{i = u}^{v-1} a_i$  ($1 \leq u < v \leq n+1,\ v-u \equiv 1 \bmod 2$) we have $s = (n+1)l + k$, where $l = (v-u-1)/2$ and $k = a_{v-1}$ if $u \equiv 1 \pmod{2}$ and $k = a_u$ if $u \equiv 0 \pmod{2}$. Suppose that $s = \sum_{i = u'}^{v'-1} a_i$ was another such representation, and let $k',l'$ be defined accordingly. Then $l' = l = \floor{s/(n+1)}$, and hence also $k = k'$. Let $w$ be the integer with $1 \leq w \leq n$ and $a_w = k$. Then $u' \equiv u \equiv w \bmod 2$. Hence, either $u' = u = w$ or $v'-1 = v-1 = w$. In either case, it follows that $u' = u$ and $v' = v$.  
\end{proof}

The constant $\frac{1}{4}$ in Proposition \ref{I:prop:cexple-basic} can be improved using a randomised variant of the construction above. Likewise, the constant $\frac{1}{2}$ in the trivial upper bound $\abs{S(a)} \leq (\frac{1}{2} + o(1))n^2$ can be improved using a somewhat more involved optimization argument, which is perhaps the most novel contribution in this paper. Here and elsewhere, $o(1)$ is shorthand for a quantity which tends to $0$ as $n \to \infty$.

\begin{theorem}\label{thm:INT:A} \label{I:prop:cexple-improved} \label{I:prop:upper-bound}
	Let $n \geq 1$ be an integer. Then 
	\begin{equation}\label{eq:INT:010}
	(c_1+o(1)) n^2 \leq \max_{a \in \Sym([n])} \abs{S(a)} \leq (c_2+o(1))n^2 
	\end{equation}	 
	where $\displaystyle c_1 = \frac{3}{2}-\frac{2}{\sqrt{e}} = 0.286\dots$ and $\displaystyle c_2 = \frac{1}{4} + \frac{\pi}{16} = 0.446\dots$.
\end{theorem}

The upper and lower bound in \eqref{eq:INT:010} are proved in Sections \ref{section:LOW} and \ref{section:EXT-up} respectively. It would be surprising if either of the constants $c_1,c_2$ in Theorem \ref{thm:INT:A} was optimal. However, we expect that the maximal number of distinct consecutive sums in a permutation of a given size should have somewhat regular asymptotics, which prompts us to pose the following question.

\begin{question}
	Does there exist a constant $c > 0$ such that for all $n \geq 1$ we have 	
	\begin{equation}\label{eq:INT:011}
	\max_{a \in \Sym([n])} \abs{S(a)} = (c+o(1))n^2,
	\end{equation}	
	and if so, what is the value of $c$?
\end{question}

\subsection{} While the results mentioned above answer the original question of Erd\H{o}s, they do not say what happens for a ``typical'' permutation. Our next result shows that the answer to the Question \ref{question:Erdos-Harzheim} is still negative ``on average'', in a rather strong sense.

\begin{theorem}\label{thm:INT:B}
	Let $n \geq 1$ be an integer and let $\bb a$ be a permutation of $[n]$ chosen uniformly at random. Put $\displaystyle c= \frac{{1+e^{-2}}}{4} = 0.283\dots$. Then, for each $\delta > 0$,
	\begin{equation}\label{eq:INT:700}
\PP\bra{ \abs{ \abs{S(\bb a)} - cn^2 } > \delta n^2} = o(1).
	\end{equation}	
\end{theorem}

The conclusion of Theorem \ref{thm:INT:B} can be more briefly stated by saying that $\abs{S(\bb a)}/n^2$ converges to $c$ in probability as $n \to \infty$. In particular, since $\abs{S(\bb a)}/n^2$ is bounded, it follows that
	\begin{equation}\label{eq:INT:701}
\displaystyle \EE(\abs{S(\bb a)}) = (c + o(1)) n^2.
	\end{equation}	
The proof of Theorem \ref{thm:INT:B} is carried out in Section \ref{section:EXC}, dealing with the expected value of $\abs{S(\bb a)}$, and  Section \ref{section:HM}, dealing with the second moment $\EE \abs{S(\bb a)}^2$.

\subsection{}\label{ssec:Hegyvari} To close this section, we remark that a similar problem was investigated by Hegyv{\'a}ri in \cite{Hegyvari}. Instead of a permutation of $[n]$, Hegyv{\'a}ri considers shorter sequences $a = (a_i)_{i = 1}^k$ where $k \leq n$ and $a_i \in [n]$ for all $i$. He then investigates the largest value of $k$, say $k_{\max}(n)$, for which there exists a sequence $a$ of length $k$, like above, such that \emph{all} consecutive sums $\sum_{i=u}^{v-1} a_i$ are distinct ($1 \leq u < v \leq k+1)$.
 He shows that
\begin{equation}\label{eq:INT:720}
	(1/3+o(1))n \leq k_{\max}(n) \leq (2/3+o(1))n.
\end{equation}
Note that any sequence taking values in $[n]$ that has distinct consecutive sums in particular has no repeated entries, and hence can be extended to a permutation. It follows that
\[
	\max_{b \in \Sym([n])} \abs{S(a)} \geq \binom{k_{\max}(n)+1}{2} \geq \bra{\frac{1}{18}+o(1)}n^2,
\]
hence Hegyv{\'a}ri's result also yields negative answer to Question \ref{question:Erdos-Harzheim} and an analogue of Proposition \ref{I:prop:cexple-basic} with a slightly worse constant $\ffrac{1}{18}$ in place of $\ffrac{1}{4}$. Conversely, the upper bound in Theorem \ref{thm:INT:A} implies the bound 
\[ k_{\max}(n) \leq \bra{\sqrt{\ffrac{\pi}{8} + \ffrac{1}{2}} + o(1)}n = (0.944\dots + o(1)) n,\]
which is non-trivial but significantly worse than Hegyv{\'a}ri's upper bound.

\subsection*{Notation} 
We let $\NN = \{1,2,\dots\}$ and $\NN_0 = \NN \cup \{0\}$. For $n \geq 1$, $[n] = \{1,2,\dots,n\}$. For a sequence $a =(a_i)_{i\in I}$ and a set $J \subset I$, $a|_J = (a_j)_{j \in J}$ denotes the restriction of $a$ to $J$. We use the standard shorthand $e(t) = e^{2\pi i t}$ for $t \in \RR$. We consistently use the convention where random variables are denoted with a boldface font.

Throughout the paper, we work in the regime $n \to \infty$. For two expressions $X = X(n)$ and $Y=Y(n)$  we write $X = O(Y)$ if there exists a constant $C > 0$ (independent of $n$) such that $\abs{X} \leq C Y$. If the constant $C$ is allowed to depend on a parameter $M$, we write $X = O_M(Y)$. Likewise, we write $X = \Omega(Y)$ if there exists a constant $c > 0$ such that $\abs{X} \geq c Y$. If $X = O(Y)$ and $X = \Omega(Y)$, we write $X = \Theta(Y)$. We write $X = o(Y)$ if $Y > 0$ for $n$ large enough and $\ffrac XY \to 0$ as $n \to \infty$, and similarly $X = \omega(Y)$ if $X > 0$ for $n$ large enough and $\ffrac XY \to \infty$ as $n \to \infty$.
Expressions such as $O(X)$ and $o(X)$ are often used to denote unspecified functions with the asymptotic behaviour as just described. For instance, $o(1)$ and $\omega(1)$ denote quantities which tend to $0$ and $+\infty$ respectively as $n \to \infty$.

\subsection*{Acknowledgements} The author wishes to thank Ben Green to pointing out this problem, and for much advice during the work on it. The author is also grateful to Sean Eberhard, Freddie Manners,  Przemek Mazur, Rudi Mrazovi\'{c} and Aled Walker for many fruitful discussions. Finally, the author thanks Christian Elsholtz, 
Fan Chung, Norbert Hegyv\'{a}ri and Jozsef Solymosi for helpful comments.

The author acknowledges the generous
support from the Clarendon Fund and SJC Kendrew Fund during the work on this paper. During the revision process of this paper the author also received funding from ERC grant ErgComNum 682150, Foundation for Polish Science (FNP) and l'Universit\'{e} de Lyon. 
\section{Average number of sums}\label{section:EXC}

\subsection{}

In this section we study the asymptotic behaviour of $\EE\abs{S(\bb a)}$ where $\bb a$ is a random permutation of $[n]$ and $n \to \infty$. Throughout, $n \geq 1$ denotes an integer, $\bb a$ denotes an element of $\Sym([n])$ chosen uniformly at random, and all instances of $o(\cdot)$ notation correspond to the limit $n \to \infty$. 

\begin{proposition}\label{EXP:prop:main}
	Let $n$ and $\bb a$ be as introduced above. Then
\begin{equation}\label{eq:THM-A}	
	\EE(\abs{S(\bb a)}) = (c + o(1)) n^2
\end{equation}
where $c = \ffrac{(1+e^{-2})}{4}$. 
\end{proposition}

Much of the work done here will be reused in Section \ref{section:HM}, and hence some of the results are stat ed in a stronger form than immediately necessary.

\subsection{}

One of our basic tools is the Hoeffding inequality. We will mostly use the slightly less well-known variant of it, pertaining to random variables sampled from a finite set without replacement.
 
\begin{theorem}[Hoeffding \cite{Hoeffding-1963}]\label{EXC:thm:Hoeffding}
	Let $\alpha, \beta \in \RR$ and let $\bb x_1, \bb x_2, \dots, \bb x_k$ be a sequence of random variables taking values in $[\alpha,\beta]$. Put $\bb x = \sum_{i=1}^k \bb x_i$ and $\mu = \EE \bb x$. Suppose that either \begin{inparaenum}[(i)]
\item $\bb x_1, \bb x_2, \dots, \bb x_k$ are independent, or 
\item $\bb x_1, \bb x_2, \dots, \bb x_k$ are sampled without replacement from a finite set.
\end{inparaenum}
Then for each $t > 0$ it holds that
	\begin{equation}\label{EXP:eq:01}
	\PP\bra{ \abs{\bb x - \mu } \geq t } \leq 2\exp\bra{- \frac{2 t^2}{k(\b-\a)^2}}.
	\end{equation}
\end{theorem}

As the first application, we record a concentration inequality for the sums $\sum_{i=u}^{v-1} \bb a_i$, where $1 \leq u < v \leq n+1$. Note that the following bound is non-vacuous as soon as the parameter $\tau$ is significantly larger than $\sqrt{\log n}$. 

\begin{corollary}\label{EXC:cor:Hoeffding}
	Let $n \geq 1$ be an integer and let $\bb a$ be a random element of $\Sym([n])$, as introduced above. Then for any $\tau > 0$, the probability that there exist $u,v$ with $1 \leq u < v \leq n+1$ such that 
	\begin{equation}\label{eq:288:1}
	\abs{ \sum_{i=u}^{v-1} \bb a_i - \frac{(v-u)(n+1)}{2} } \geq \tau n  \sqrt{v-u}
	\end{equation}
is less than or equal to $2 n^2 \exp(-2\tau^2 )$.
\end{corollary}
\begin{proof}
By Theorem \ref{EXC:thm:Hoeffding}, for each pair $u,v$ with $1 \leq u < v \leq n+1$, the probability that \eqref{eq:288:1} holds is at most $2 \exp(-2\tau^2 )$. It remains to notice that the number of such pairs $u,v$ does not exceed $n^2$, and apply the union bound.
\end{proof}

\subsection{}\label{SSec:proof-of-thm-B}

In order to estimate $\EE \abs{S(\bb a)}$, we will use the basic identity
\[
	\EE \abs{S(\bb a)} = \sum_{s=1}^{\binom{n+1}{2}} \PP( s \in S(\bb a)
\]
and estimate  the probabilities $\PP( s \in S(\bb a))$ separately for different values of $s$ with $1 \leq s \leq \binom{n+1}{2}$. For technical reasons, we will restrict our attention to consecutive sums that start at positions which are not too close to the origin and also not too far to the right; this is made precise in Proposition \ref{EXC:prop:P-s-in-S} below. To keep track of these restrictions, we introduce a parameter $\e > 0$; because it will appear at various points in the argument we make the following global definition. For the sake of concreteness we could have made a specific choice, such as $\e(n) = 1/\log n$, but we believe that not making this choice improves the exposition.
\begin{convention*}
	Throughout this section $\e = \e(n) > 0$ denotes a positive quantity (dependent on $n$) with $\e = n^{-o(1)}$ and $\e = o(1)$; i.e. $\frac{\log 1/\e(n)}{\log n} \to 0$ and $\e(n) \to 0$ as $n \to \infty$. All implicit error terms are allowed to depend on the choice of $\e$.
\end{convention*}

Given an index $u$ with $1 \leq u \leq n$ and a permutation $a \in \Sym([n])$, we let $S_u(a)$ denote the set of sums starting at $u$, that is
\begin{equation}\label{eq:EXC:001}
S_u( a) := \left\{ \sum_{i=u}^{v-1} a_i \ : \ u < v \leq n+1\right\}.
\end{equation}
Note that one expects, at least heuristically, that the sum $s = \sum_{i=u}^{v-1} a_i$ should be close to to $(u-v)(n+1)/2$, and hence $v$ should be approximately $u + 2s/(n+1)$. Motivated by this observation, we define the set of restricted sums $S'(a)$ by declaring for an integer $s = \sigma \binom{n+1}{2}$, $0 < \sigma \leq 1$, that 
\begin{equation}\label{eq:EXC:002}
s \in S'(a) \iff
s \in S_u(a) \text{ for some } u \text{ s.t. } \e n \leq u \leq (1-\sigma-\e)n.
\end{equation}
We stress that this definition depends on the parameter $\e$ introduced above.

\begin{proposition}\label{EXC:prop:P-s-in-S}
	Let $n$, $\bb a$ and $\e$ be as introduced above. Let $s$ be an integer with $\e \binom{n+1}{2} \leq s \leq (1-\e)\binom{n+1}{2}$ and put $\sigma = s/\binom{n+1}{2}$. Then 
	\begin{equation}\label{EXC:eq:P-s-in-S}
	\PP\Big( s \not \in S'(\bb a)  \Big) = e^{-2+2\sigma} + o(1),
	\end{equation}
	where the error term is uniform with respect to $s$ (but may depend on the definition of $\e$).
\end{proposition}

\begin{proof}[Proof of Proposition \ref{EXP:prop:main}, assuming Proposition \ref{EXC:prop:P-s-in-S}]
	Summing \eqref{EXC:eq:P-s-in-S} over all integers $s$ with $\e \binom{n+1}{2} \leq s \leq (1-\e)\binom{n+1}{2}$ and applying Riemann approximation we obtain
	
	\begin{align*}
		\EE\abs{S'(\bb a)} &= \sum_{s} \PP(s \in S'(\bb a)) + O(\e n^2) \\
		&= \binom{n+1}{2} \int_{\e}^{1-\e} (1-e^{-2+2\sigma}) d \sigma + o( n^2) 
		\\ &= \bra{ \frac{1+e^{-2}}{4} + o(1)} n^2.
	\end{align*}
	
	We next estimate $\abs{S(\bb a) \setminus S'(\bb a) }$. For any $a \in \Sym([n])$ and any sum $s = \sigma \binom{n+1}{2} = \sum_{i=u}^{v-1}  a_i$ in $S( a) \setminus S'( a)$, either $u \leq \e n$ or $u > (1-\sigma - \e)n$. The former possibility accounts for $O(\e n^2)$ elements of $S( a) \setminus S'( a)$. In the latter case, either $v > (1-2\e)n$ (which again accounts for a contribution of $O(\e n^2)$) or $v- u < (\sigma - \e)n$, which implies that $s > (v-u)(n+1)/2 + \e \binom{n+1}{2}$. 
	By Corollary \ref{EXC:cor:Hoeffding}, the probability that there exist $u,v$ with $1 \leq u < v \leq n+1$ such that $\sum_{i=u}^{v-1} \bb a_i - \ffrac{(v-u)(n+1)}{2} > \e \binom{n+1}{2}$ is at most $2 n^2 \exp\bra{-\e^2 n/2}$. Hence, using the trivial bound $\abs{S(a)} \leq \binom{n+1}{2}$ for all $a \in \Sym([n])$, we obtain 
\[
	\EE \abs{S(\bb a) \setminus S'(\bb a) } = O(\e n^2) + O(n^4 \exp\bra{-\e^2 n/2}) = o(n^2),
\]
and \eqref{EXC:eq:P-s-in-S} follows from previous considerations.
\end{proof}

\subsection{}\label{SSec:inclusion-exclusion}

We devote the remainder of this section to proving Proposition \ref{EXC:prop:P-s-in-S}. To this end, we will use the truncated version of the inclusion--exclusion principle, also known as the Bonferroni inequalities. 

Let $1 \leq s \leq \binom{n+1}{2}$ be an integer, and put $\sigma = s/ \binom{n+1}{2}$, $u_{\min} = \ceil{\e n}$, $u_{\max} = \floor{(1-\e - \sigma)n}$. It follows from the inclusion-exclusion principle asserts that

\[ \PP( s \not \in S'(\bb a)) = \PP\left(\neg \bigvee_{u = u_{\min}}^{u_{\max}} s \in S'(\bb a)\right)
 = \sum_{M=0}^{\infty} (-1)^M \sum_{\abs{U} = M}  \PP\bra{ \bigwedge_{u \in U} s \in S_u(\bb a)},
\]
where the inner sum is taken over all sets of integers $U \subset [u_{\min},u_{\max}]$ with $\abs{U} = M$, and the probability of the empty conjunction is $1$ by convention.

Let $N \geq 0$ be an integer. If $N$ is even then it follows from the Bonferroni inequalities that
\begin{equation}\label{EXC:eq:39}
	\PP( s \not \in S'(\bb a)) \leq \sum_{M=0}^{N} (-1)^M \sum_{\abs{U} = M}  \PP\bra{ \bigwedge_{u \in U} s \in S_u(\bb a)},
\end{equation}
where the inner sum is again taken over all sets $U \subset [u_{\min},u_{\max}]$ with $\abs{U} = M$. Conversely, if $N$ is odd then the inequality is reversed:
\begin{equation}\label{EXC:eq:39x}
	\PP( s \not \in S'(\bb a)) \geq \sum_{M=0}^{N} (-1)^M\sum_{\abs{U} = M} \PP\bra{ \bigwedge_{u \in U} s \in S_u(\bb a)}.
\end{equation}
In order to prove Proposition \ref{EXC:prop:P-s-in-S} we will obtain a estimate each of the sums 
$\sum_{\abs{U} = M}\PP\bra{ \bigwedge_{u \in U} s \in S_u(\bb a)}$.

\subsection{}
The first step towards estimating the inner sums in \eqref{EXC:eq:39} and \eqref{EXC:eq:39x} is to obtain a uniform upper bound on the probabilities $\PP\bra{ \bigwedge_{u \in U} s \in S_u(\bb a)}$. This will allow us to eliminate a small proportion of summands for which more accurate estimates are difficult to obtain. In order to accommodate later applications in Section \ref{section:HM} we prove a statement that is slightly more general than what is currently required. 

\begin{lemma}\label{EXC:lem:kill-bad-U}
	Fix an integer $M \geq 1$ and let $n$, $\bb a$ and $\e$ be as introduced above. For $1 \leq k \leq M$, let $s_k$ and $u_k$ be integers with $1 \leq s_k \leq \binom{n+1}{2}$ and $\e n \leq u_k \leq n$. Assume further that $u_k \neq u_l$ for all $k,l$ with $k \neq l$ and $1 \leq l,k \leq M$. Then
	\begin{equation}\label{EXC:eq:40a}
		\PP\bra{ \bigwedge_{k=1}^M s_k \in S_{u_k}(\bb a)} = O_M\bra{ \bra{ \frac{ {\log n} }{\e n}}^M},
	\end{equation}
	where the implicit constant depends only on $M$.
\end{lemma}
\begin{proof}
	We may assume that $n$ is sufficiently large in terms of $M$, since otherwise the statement it trivially true. Put $C = 2M+6$. It follows from Corollary \ref{EXC:cor:Hoeffding} (applied with $\tau = \sqrt{(M+1)\log n}$) that the probability that there exist $u,v$ with $1 \leq u < v \leq n+1$ such that $v - u \geq C \log n$ and $\sum_{i=u}^{v-1} \bb a_i \leq n$ is less than $2n^{-M}$.
	
	For $j$ with $1 \leq j \leq n+1$ let $\cA_{j}$ denote the event that \begin{inparaenum}[(i)]
\item $s_k \in S_{u_k}(\bb a)$ for all $k$ with $1 \leq k \leq M$ and $u_k \geq j$ and
\item $\sum_{i=u}^{v-1} \bb a_i > n$ for all $u,v$ with $j \leq u < v \leq n+1$ and $v-u \geq C \log n$. 
\end{inparaenum}
Note that whether or not $\cA_{j}$ holds is fully determined by $\bb a_{i}$ for $i$ with $j \leq i \leq n$. Moreover,  $\cA_j$ implies $\cA_{j+1}$ for each $j$ with $1 \leq j \leq n$ and $\cA_{n+1}$ is vacuously true. Also,
	\begin{equation}\label{EXC:eq:90}	
	\PP\bra{ \bigwedge_{k=1}^M s_k \in S_{u_k}(\bb a)} \leq \PP(\cA_1) + 2n^{-M}.
	\end{equation}

	Consider a process where $\bb a_j$ are selected in the order of decreasing $j$. For each $j$ with $1 \leq j \leq n$, conditional on $\bb a_{j+1}, \dots, \bb a_{n}$, the value of $\bb a_j$ is uniformly distributed among the $j$ elements of $[n]$ which have not previously been selected. We will estimate $\PP(\cA_j)$ in terms of $\PP(\cA_{j+1})$. For each $j$ we have the trivial bound
	\begin{equation}\label{EXC:eq:91x}	
	\PP(\cA_j)/\PP(\cA_{j+1}) \leq 1.
	\end{equation}
	Suppose next that $j = u_k$ for some $1 \leq k \leq M$, so in particular $j \geq  \e n$. If $\cA_{j+1}$ holds then there at most $C \log n$ values of $v$ such that $s_k-n \leq \sum_{i=u_{k+1}}^v < s_k$, meaning that there are at most $C \log n$ possible values of $\bb a_j$ such that $s_k \in S_{u_k}(\bb a)$. It follows that
	\begin{equation}\label{EXC:eq:91}	
	\PP(\cA_j)/\PP(\cA_{j+1}) = \PP(\cA_j | \cA_{j+1}) \leq \frac{C \log n}{\e n}.
	\end{equation}
	Applying \eqref{EXC:eq:91} for all $j \in \{u_k\}_{k=1}^M$ and \eqref{EXC:eq:91x} for all remaining $j$ leads to
	\[ \PP(\cA_1) \leq  \bra{\frac{ {C \log n} }{\e n}}^M,\]
	which together with \eqref{EXC:eq:90} implies \eqref{EXC:eq:40a}.
\end{proof}

\subsection{}\label{SSec:P-wedge-s-in-S}
The probabilities $\PP\bra{ \bigwedge_{u \in U} s \in S_u(\bb a)}$ appearing in \eqref{EXC:eq:39} and \eqref{EXC:eq:39x} are easier to estimate under certain genericity assumptions on the index set $U$. 
We will say that a sequence of indices $(u_k)_{k=1}^M$ with $1 \leq u_k \leq n$ for all $k$ is \emph{well-separated} if $u_k$ are separated gaps of length $\sqrt{n}$, i.e,
\begin{equation}\label{EXC:cond:U-reg}
 \abs{u_k - u_l} \geq \sqrt{n} \text{ for all } k,l \text{ with } 1 \leq k < l \leq M.
\end{equation}
Accordingly, we say that a set $U = \{u_i\}_{i=1}^M$ with $\abs{U} = M$ is well-separated if \eqref{EXC:cond:U-reg} holds. The choice of $\sqrt{n}$ is somewhat arbitrary, but convenient in the applications.

For well-separated index sets we can obtain a tight estimate for the probabilities under investigation, from which Proposition \ref{EXP:prop:main} will ultimately follow.
\begin{proposition}\label{EXC:prop:P-wedge-s-in-S}
	Fix an integer $M \geq 1$ and let $n$, $\bb a$ and $\e$ be as introduced above. For $1 \leq k \leq M$ let $s_k = \sigma_k \binom{n+1}{2}$ be integers with $\e \leq \sigma_k \leq 1-\e$ and let $u_k$ with $\e n \leq u_k \leq (1-\sigma_k - \e) n$. Suppose further that the sequence $(u_k)_{k=1}^M$ is well-separated (c.f.\ \eqref{EXC:cond:U-reg}). Then 
\begin{equation}
	\PP\bra{ \bigwedge_{k=1}^M { s_k \in S_{u_k}(\bb a) } } = \bra{\ffrac{2}{n}}^M \bra{1+ O_M\bra{n^{-\frac{1}{40}}}},
\end{equation}
where the implicit error term depends only on $M$ and the choice of $\e$.
\end{proposition}

\begin{proof}[Proof of Proposition \ref{EXC:prop:P-s-in-S} assuming Proposition \ref{EXC:prop:P-wedge-s-in-S}]
	Let $N$ be a large even integer. From \eqref{EXC:eq:39} we have
	\begin{equation}\label{eq:EXC:349}
		\PP(s \not\in S'(\bb a)) \leq \sum_{M \leq N} (-1)^M \sum_{\abs{U} = M} \PP\bra{ \bigwedge_{u \in U} s \in S_u(\bb a) },
	\end{equation}
	where the inner sum runs over all sets of integers $U \subset [\e n, (1-\sigma-\e) n]$ with $\abs{U} = M$.
	Let $n'$ denote the number of integers $u$ with $\e n \leq u \leq (1-\sigma-\e)n$. Then
	\( n' = (1-\sigma-2\e)n + O(1) \) and the number of summands in the inner sum in \eqref{eq:EXC:349} is 
	\[\binom{n'}{M} = \frac{((1-\sigma)n)^M}{M!} +O_M(\e n^M).\]
The number of summands for which $U$ is not well-separated (i.e., \eqref{EXC:cond:U-reg} fails) is $O_M(n^{M-\frac{1}{2}})$, and by Lemma \ref{EXC:lem:kill-bad-U} each of these summands has size bounded by $\displaystyle O_M\braBig{\brabig{ \frac{ \log n}{\e n}}^M}$. Hence, the total contribution to \eqref{eq:EXC:349} from summands corresponding to sets $U$ that are not well-separated is bounded by 
\[
O_M\bra{n^{-\frac{1}{2}} \bra{ \frac{ \log n}{\e n}}^M} = O_M(n^{-\frac{49}{100}}).
\]
By Proposition \ref{EXC:prop:P-wedge-s-in-S}, all remaining summands are of the form
	\[\bra{\frac{2}{n}}^M \bra{1+O_M(n^{-\frac{1}{40}})}.\]
	Thus, the inner sum in \eqref{eq:EXC:349} can be estimated as
\begin{align*}
	\sum_{\abs{U} = M} \PP\bra{ \bigwedge_{u \in U} s \in S_u(\bb a) } &= 
	\bra{ \frac{(1-\sigma)^M}{M!}n^M + O_{M}\bra{\e n^M} } \cdot \bra{\frac{2}{n}}^M \bra{ 1 + O_M(n^{-\frac{49}{100}})}
	\\&= \frac{ (2-2\sigma)^M }{M!} + O_M(\e).
\end{align*}
	This leads to the upper bound (valid as long as $N \geq 4$)
	
\begin{align*}
\PP(s \not \in 	(\bb a)) &\leq \sum_{M=0}^N (-1)^M \frac{ (2-2\sigma )^M }{M!} + O_N(\e)
\leq e^{-2+2\sigma} + \frac{2^N}{N!} + O_N(\e).
\end{align*}		
 Letting $N \to \infty$ slowly with $n$ we conclude that $\PP(s \in S'(\bb a)) \leq  e^{-2+2\sigma} + o(1)$, where the error term is uniform with respect to the choice of $s$ and depends at most on our choice of $\e$.
	 
	 Running the same argument with $N$ odd we obtain the reversed bound $\PP(s \in S'(\bb a)) \geq  e^{-2+2\sigma} - o(1)$, and hence $\PP(s \not\in S'(\bb a)) = e^{-2+2\sigma}+o(1)$.
\end{proof}

\subsection{}
Our strategy of proof of Proposition \ref{EXC:prop:P-wedge-s-in-S} relies on showing that the events $ s_k \in S_{u_k}(\bb a) $ are approximately independent and then estimating their probabilities separately.  In order to accomplish this, we consider a somewhat convoluted procedure for selecting $\bb a$, which we now sketch out.

Let $m$ be a moderately large even integer ($m = 2 \floor{n^{{3}/{19}}}$ will turn out to be a convenient choice). For each $k$ with $1 \leq k \leq M$ we first select a (random) set $\bb A_k \subset [n]$ from which $\bb a_i$ will be selected for $u_k \leq i < u_k+m$. This is useful because when $\bb A_k$ are substantially larger than $m$ then the distribution of the sum $\sum_{i=u_k}^{u_k+m-1} \bb a_{i}$ (conditional on $\bb A_k$, for most choices of $\bb A_k$) can  be accurately approximated. In fact, this distribution is close to the distribution of the sum of $m$ independent random variables distributed uniformly on $[n]$. 

Suppose now that for some $k$ we have already selected the set $\bb A_k$, as well as $\bb a_i$ for $i \geq u_k+m$. Then the probability that $s_k \in S_{u_k}(\bb a)$ is very close to the probability that the sum of $m$ independent random variables distributed uniformly on $[n]$ belongs to the set  $s_k-S_{u_k+m}(\bb a)$. The latter probability is relatively easy to estimate and, under mild additional assumptions, is essentially independent of what set $\bb A_k$ and what entries $\bb a_i$ for $i \geq u_k+m$ were chosen. This yields the approximate independence mentioned above. 

We now put this plan into action.

\subsection{}

For a set $A \subset [n]$ and an integer $k$ with $1 \leq k \leq \abs{A}$, we let $\f_{A,k}\colon \ZZ \to [0,1]$ denote the probability mass function of the sum $\bb x = \bb x_1 + \bb x_2 + \dots \bb x_k$ of $k$ random variables $\bb x_1, \bb x_2, \dots, \bb x_k$ sampled from $A$ without replacement, that is,
\[\f_{A,k}(x) = \PP(\bb x = x) = \PP\bra{\sum_{i=1}^k \bb x_i = x}, \qquad (x \in \ZZ).\]
It will also be convenient to extend this definition to $k > \abs{A}$, setting $\f_{A,k}(x) = 0$ for all $x \in \ZZ$.
Likewise, we let $\f_{\uni,k} \colon \ZZ \to [0,1]$ denote the probability mass function of the sum $\bb y = \bb y _1 + \bb y _2 + \dots + \bb y _k$ of $k$ independent random variables $\bb y_i$ distributed uniformly on $[n]$.  For instance, $\f_{A,1} = \frac{1}{\abs{A}} 1_A$ and  $\f_{\uni,1} = \frac{1}{n} 1_{[n]}$, where $1_X$ denotes the indicator function of a set $X$.

Recall that for a bounded function $f \colon \ZZ \to \RR$, its supremum norm is given by $\norm{f}_{\infty} = \sup_{x \in \ZZ} \abs{f(x)}$.
For two functions $f,g \colon \ZZ \to \RR$ with finite supports, their convolution $f \ast g$ is given by \[f \ast g (x) = \sum_{y \in \ZZ} f(x-y) g(y),\qquad (x \in \ZZ).\] 

We will say that a non-empty set $A \subset [n]$ of size $\alpha n$ is \emph{uniform}, or $\sU(A)$ for short, if for each $l \geq 1$ it holds that
\begin{equation}\label{eq:EXC:def-of-U(A)}
		\norm{ \f_{\bb A,2l} - \f_{\uni,2l} }_{\infty} \leq 15 n^{-3/2} l^3 \alpha^{-2} \log n.
\end{equation} 
While we formally include all $l \geq 1$ in the definition, \eqref{eq:EXC:def-of-U(A)} is trivially true for sufficiently large $l$ since the expression on the right hand side is larger than $1$. The upper bound in \eqref{eq:EXC:def-of-U(A)} was chosen so that we have very good control on the probability that a random subset of $[n]$ is not uniform, given by the following result.
 
The following lemma shows that a random set of a given size is uniform with overwhelming probability.

\begin{lemma}\label{EXC:lem:f-A,2=uni}
\label{EXC:lem:f-A,m=uni}
	Let $n \geq 1$ be an integer, let $\alpha \in (0,1]$ be such that $\alpha n \in \NN$, and let $\bb A$ be a set chosen uniformly at random from all subsets of $[n]$ with cardinality $\alpha n$. Then 
\begin{equation}\label{eq:924:1}
	 \PP\left( \neg \sU(\bb A) \right) \leq \exp(-4\log^2 n + O(\log n)).
\end{equation}
\end{lemma}
\begin{proof}
	For each integer $l \geq 1$ we will show that 
	\begin{equation}\label{eq:924:2}
	\PP\left(	\norm{ \f_{\bb A,2l} - \f_{\uni,2l} }_{\infty} \geq 15 n^{-3/2} l^3 \alpha^{-2} \log n  \right) \leq \exp(-4\log^2 n + O(\log n)).
	\end{equation}
	Once this is accomplished, in order to prove the estimate \eqref{eq:924:1} it is enough to apply the union bound over all $l$ with $1 \leq l \leq n$ and to recall that the uniformity condition \eqref{eq:EXC:def-of-U(A)} is trivially true for $l > n$.
	
	We begin with the case $l = 1$. For each integer $x$ with $2 \leq x \leq 2n$ we will show that 
	\begin{equation}\label{eq:924:3}
	\PP\left(
		\abs{ 1_{\bb A} * 1_{\bb A}(x) - \alpha^2 1_{[n]} * 1_{[n]}(x) } \geq 7 \sqrt{n} \log n
	\right) \leq \exp(-4\log^2 n + O(1)).		
	\end{equation}
	Note that for $x$ with $x < 2$ or $x > 2n$ we have
	\[  1_{\bb A} * 1_{\bb A}(x) = \alpha^2 1_{[n]} * 1_{[n]}(x) = 0, \]
	so applying to \eqref{eq:924:3} the union bound over all $x$ with $2 \leq x \leq 2n$ we obtain 
		\begin{equation}\label{eq:924:4}
	\PP\left(	\norm{ \f_{\bb A,2} - \f_{\uni,2} }_{\infty} \geq 7 n^{-3/2} \alpha^{-2} \log n  \right) \leq \exp(-4\log^2 n + O(\log n)),
	\end{equation}
	which is marginally stronger than \eqref{eq:924:2} with $l = 1$. Since both $1_{\bb A} * 1_{\bb A}$ and $1_{[n]} * 1_{[n]}$ are symmetric with respect to $x = n+1$, it will suffice to consider $x$ with $2 \leq x \leq n+1$. We will also assume, for notational convenience, that $x$ is odd; the argument in the case where $x$ is even is fully analogous.
	
	Pick an odd integer $x$ with $2 \leq x \leq n+1$. Define $\bb B = \bb A \cap [1,x/2)$ and $\bb C = \bb A \cap (x/2,x-1]$. Then $1_{[n]} * 1_{[n]}(x) = x-1$ and $1_{\bb A} * 1_{\bb A}(x) = 2\abs{\bb B \cap (x- \bb C)}$. Note that $1_\bb A, 1_\bb B$ and $1_\bb C$ can be construed as sequences of random variables, sampled without replacement from a multiset containing $\alpha n$ copies of $1$ and $(1-\alpha) n$ copies of $0$. Hence, it follows from Hoeffding's inequality that 
	\begin{align}\label{eq:924:5}
		\PP\left( \abs{ \abs{\bb B} - \alpha (x-1)/2 } \geq \sqrt{x-1} \log n \right) &\leq 2 \exp(-4 \log^2 n), \\
		\PP\left( \abs{ \abs{\bb C} - \alpha (x-1)/2 } \geq \sqrt{x-1} \log n \right) \label{eq:924:6}
		&\leq 2 \exp(-4 \log^2 n).
	\end{align}
	
For any set of integers $C \subset (x/2,x-1]$ with $\abs{C} \leq \alpha n$, conditional on $\bb C = C$, $1_{\bb B}$ can again be construed as a sequence of random variables sampled without replacement, this time from a multiset containing $\alpha n - \abs{C}$ copies of $1$ and $(1-\alpha) n - (x-1)/2 + \abs{C}$ copies of $0$. Applying Hoeffding's inequality again, we conclude that
\begin{align*}
	\PP \left( \abs{
			\abs{\bb B \cap (x- \bb C)} - \frac{2\abs{\bb B}\abs{\bb C}}{x-1}
		} 
		\geq \sqrt{x-1} \log n	
	\middle| \bb C = C \right) &\leq 2 \exp\left( \frac{ -2(x-1) \log^2 n}{ \abs{C} }\right) \\	
	&\leq 2 \exp\left(-4 \log^2 n \right).
\end{align*}
Averaging over all the possible choices of $C$, we conclude that 
\begin{equation}\label{eq:924:8}
	\PP \left( \abs{
			\abs{\bb B \cap (x- \bb C)} - \frac{2\abs{\bb B}\abs{\bb C}}{x-1}
		} 
		\geq \sqrt{x-1} \log n \right) \leq 2 \exp\left(-4 \log^2 n \right).
\end{equation}

By the union bound applied to \eqref{eq:924:5}, \eqref{eq:924:6} and \eqref{eq:924:8}, with probability at least $1-6\exp(-4\log^2 n )$ we have the following chain of inequalities:
	
	\begin{align*}
		\abs{ 1_{\bb A} * 1_{\bb A}(x) - \alpha^2 1_{[n]} * 1_{[n]}(x) } 
		 &= \abs{2 \abs{\bb B \cap (x- \bb C)} - \alpha^2 (x-1) } 
		\\& \leq  \abs{ \frac{4 \abs{\bb B} \abs{\bb C}}{x-1} - \alpha^2 (x-1) } + 2\sqrt{x-1} \log n 
		\\& \leq 4 \log^2 n + (4 \alpha +2)\sqrt{x-1} \log n 
		\leq 7 \sqrt{n}\log n,
	\end{align*}
	where in the last transition we assume, as we may, that $n$ is sufficiently large. This finishes the proof of \eqref{eq:924:3} 	
	
	Consider now $l > 1$. We may assume without loss of generality that $l\leq \alpha n/2$, since the uniformity condition \eqref{eq:924:1} automatically holds is $l > \alpha n/2$. Let $l'$ be the integer such that $l' \floor{\alpha n/l} + (l-l') \ceil{\alpha n/l} = \alpha n$, and let $\bb A = \bb A_1 \cup \dots \cup \bb A_l$ be a partition of $\bb A$ into cells $\bb A_j$ ($1 \leq j \leq l$) with cardinalities $\abs{\bb A_j} = \floor{\alpha n/l}$ if $1 \leq j \leq l'$ and $\abs{\bb A_j} = \ceil{\alpha n/l}$ if $l' < j \leq l$, chosen uniformly at random. Note that each of the cells $\bb A_j$ ($1 \leq j \leq l$) contains at least two elements. If $A \subset [n]$ is a set with $\abs{A} = \alpha n$ then, conditional on $\bb A = A$, choosing a pair of elements from each of the cells $\bb A_j$ ($1 \leq j \leq l$) uniformly at random yields a $2l$-tuple with the same distribution as sampling $2l$ entries of $A$ without replacement, and hence 
	\[ \f_{A,2l} = \EE \left( \f_{\bb A_1,2} \ast \dots \ast \f_{\bb A_l,2} \ \middle| \ \bb A = A\right).\]
	As a consequence,
	\begin{equation}\label{eq:925:1}
		\f_{\bb A,2l} = \EE \left( \f_{\bb A_1,2} \ast \dots \ast \f_{\bb A_l,2} \ \middle| \ \bb A \right).
	\end{equation}	 

	It is a standard fact that if $f$ is a probability distribution then for any bounded sequence $g \colon \ZZ \to \RR$ one has $\norm{ f \ast g}_{\infty} \leq \norm{g}_\infty$. Hence, by a standard telescoping argument we find that 
	\begin{equation}\label{eq:925:2}
\norm{ \f_{\bb A_1,2} \ast \f_{\bb A_2,2} \ast \dots \ast \f_{\bb A_{l},2} - \f_{\uni,2l} }_\infty \leq \sum_{j=1}^{l} \norm{\f_{\bb  A_j,2} - \f_{\uni,2}}_\infty.
	\end{equation}	
For a set $B \subset [n]$ with cardinality $\beta n$, let $\cB(B)$ denote the statement 
	\[\norm{\f_{B,2}-\f_{\uni,2}}_\infty \geq 7 n^{-3/2} \beta^{-2} \log n.\]
	Combining \eqref{eq:925:2} and \eqref{eq:925:1},  can now estimate	
	\begin{align*}
	 \norm{  \f_{\bb A,2l} - \f_{\uni,2l} }_\infty 
	 &\leq \sum_{j=1}^{l} \EE \left( \norm{  \f_{\bb A_j,2} - \f_{\uni,2} }_\infty  \ \middle| \ \bb A \right) 
	 \\ & \leq 7 l' n^{1/2} \floor{\alpha n/l}^{-2} \log n +
	7 (l-l') n^{1/2} \ceil{\alpha n/l}^{-2} \log n +
	\sum_{j=1}^{l}\PP\left( \cB(\bb A_j )  | \bb A \right)
	\\ & \leq 14 n^{-3/2} \alpha^{-2} l^3 \log n +
	\sum_{j=1}^{l}\PP\left( \cB(\bb A_j )  | \bb A \right).
	\end{align*}

	For each integer $j$ with $1 \leq j \leq l$, the set $\bb A_j$ is uniformly distributed among all subsets of $[n]$ with cardinality $\floor{\alpha n/l}$ or $\ceil{\alpha n/l}$ (depending on the value of $j$), so it follows from \eqref{eq:924:4} that
	\begin{equation*}\label{eq:925:3}
\PP(\cB(\bb A_j)) \leq \exp(-4 \log^2 n + O(\log n)).
	\end{equation*}	
	By the Markov inequality, 
	\begin{equation}\label{eq:925:4}
 \PP \left( \PP( \cB(\bb A_j | \bb A ) \geq n^{-3/2} \alpha^{-2} l^2 \log n \right) \leq \exp( - 4 \log^2 n +O(\log n) ).
	\end{equation}	
	Inserting \eqref{eq:925:4} into the previously obtained estimate on $\norm{  \f_{\bb A,2l} - \f_{\uni,2l} }_\infty$ and applying the union bound over all $j$ with  $1 \leq j \leq l$ we conclude that
\[	\PP\left( 
	\norm{  \f_{\bb A,2l} - \f_{\uni,2l} }_\infty \geq 15 l n^{-3/2} l^3 \alpha^{-2} \log n 	
	\right)\leq \exp( - 4 \log^2 n +O(\log n) ),
\]
which is precisely \eqref{eq:924:2}.
\end{proof}

\subsection{}

We record some basic facts about sums of independent random variables uniformly distributed on $[n]$. The key insight, made precise in Lemma \ref{lem:EXC:P-x-in-R}, is that if $k \geq 2$ is an even integer then $\f_{\uni,k}$ is smooth enough to remove irregularities on the scale of roughly $n \sqrt{k}$. We begin with a simple lemma.

\begin{lemma}\label{EXC:lem:f_uni-basic}
	Let $n \geq 1$ be an integer, and let $k \geq 2$ be an even integer. Put $x_0 = k(n+1)/2$. Then
	\begin{enumerate}
	\item\label{EXC:item-90-A} $\f_{\uni,k}(x)$ is increasing for $x \leq x_0$, decreasing for $x \geq x_0$, and symmetric with respect to $x = x_0$;
	\item\label{EXC:item-90-B} $\f_{\uni,k}(x) \leq C/\sqrt{k}n$ for all integers $x$ where $C > 0$ is an absolute constant.
	\item\label{EXC:item-90-C} for each $t > 0$ it holds that $\displaystyle \sum_{\abs{y} \geq t} \f_{\uni,k}(x_0+y) \leq 2\exp(-2t^2/kn^2)$.
	\end{enumerate}
\end{lemma}
\begin{proof}
For \eqref{EXC:item-90-A}, proceed by induction on $k$. The case $k = 2$ is clear by direct inspection. It remains to notice that if the claim is true for $k$ and $l$ then it is also true for $k+l$ since $\f_{\uni,k+l} = \f_{\uni,k} \ast \f_{\uni,l}$ (see e.g.{} \cite[Thm.{} 1.6]{book-unimodular}).

To deal with \eqref{EXC:item-90-B}, we introduce the exponential sum 
\[\phi(t) := \frac{1}{n} \sum_{j=1}^n e( j t) = \frac{e(t)}{n} \frac{e(nt)-1}{e(t)-1}\]
 where $e(t) = e^{2 \pi i t}$, so that
 \[\f_{\uni,k}(x) = \int_{0}^1 e(-xt) \phi(t)^k dt \leq \int_0^k \abs{\phi(t)}^k dt.\]
We have the trivial bound $\abs{\phi(t)} \leq 1$ for each $t \in [0,1]$, as well as the identity $\abs{\phi(t)} = \abs{ \frac{\sin n t\pi}{n \sin t \pi}}$. We also recall a standard estimate $2 x \leq \sin \pi x \leq \pi x$ for $0 \leq x \leq 1/2$, which implies that $\abs{\phi(t)} \leq (2nt)^{-1}$. It follows that 
\begin{equation}\label{eq:392:1}
	\int_{1/2n}^{1-1/2n} \abs{\phi(t)}^k dt \leq 2 \int_{1/2n}^\infty (2nt)^{-k} dt = \frac{1}{(k-1)n}
\end{equation}

For $0 < t < 1/n$ we use slightly more careful estimates. It follows from elementary analysis that there exist constants $c_1, c_2 > 0$ such that \[\pi x e^{-c_1 x^2} \leq \sin \pi x \leq \pi x e^{-c_2 x^2} \text { for } 0 \leq x \leq 1/2\]
(one can take $c_2 = \ffrac{\pi^2}{6}$ and $c_1 = 4 \log(\pi/2)$). We may (increasing $C$ if necessary) assume without loss of generality that $n$ is large enough that $c_2 n^2 > 2c_1$, and hence 
\[ \abs{ \phi(t) } \leq \frac{\exp(-c_2 n^2t^2)}{\exp(-c_1 t^2)} \exp(-c_2 n^2 t^2/2). \] As a consequence,
\[
\int_{0}^{1/2n} \abs{ \phi(t) }^k dt \leq \int_{0}^\infty \exp(-c_2 k n^2 t^2/2) dt = \bra{\frac{ \pi }{ 2 c_2 k n^2}}^{1/2}.\]
The same estimate holds by essentially the same reasoning for the remaining integral $\int_{1-1/2n}^{1} \abs{ \phi(t) }^k dt$. The claim \eqref{EXC:item-90-B} now follows by combining the partial estimates obtained above.

Finally, \eqref{EXC:item-90-C} is a direct application of the Hoeffding inequality.
\end{proof}

\newcommand{\vart}{t}
\newcommand{\varT}{T}
\begin{lemma}\label{lem:EXC:P-x-in-R}
	Let $n,k,\vart,T \geq 1$ be integers with $k$ even and $T \geq 2t$, let $\rho \in [0,1]$ and let $R \subset \ZZ$. Put $x_0 := k(n+1)/2$ and 
\[
	\eta := \max_{y \in [x_0-T,x_0+T-t)} \abs{ \frac{\abs{R \cap [y,y+t)}}{t} - \rho }.
\]
	Assume further that $\eta < \rho$. Then
	\begin{equation}\label{eq:728:1}
	\abs{ \sum_{x \in R} \f_{\uni,k}(x) - \rho } \leq \eta + 2 \exp\left(-\frac{T^2}{kn^2}\right) + O\left(\frac{t \rho}{\sqrt{k}n}\right).
	\end{equation}
\end{lemma}
\begin{proof}
	For $i \in \ZZ$ put $x_i = x_0 + i t$. 
	If follows from Lemma \ref{EXC:lem:f_uni-basic}\eqref{EXC:item-90-A} that
	\[
		t\sum_{i = -\infty}^{\infty} \f_{\uni,k}(x_i) - t\f_{\uni,k}(x_0) \leq \sum_{x = -\infty}^{\infty}  \f_{\uni,k}(x_i) = 1 \leq t\sum_{i = -\infty}^{\infty} \f_{\uni,k}(x_i) + t f_{\uni,k}(x_0).
	\]
	In particular, it follows from Lemma \ref{EXC:lem:f_uni-basic}\eqref{EXC:item-90-B} that
	\begin{equation}\label{eq:728:2}
	\sum_{i = -\infty}^{\infty} \f_{\uni,k}(x_i)  = 1/t + O(1/\sqrt{k}n).
	\end{equation}
	Let $i_{\max} := \floor{T/t}$. Since $i_{\max}t \geq T/2$, it follows from Lemma \ref{EXC:lem:f_uni-basic}\eqref{EXC:item-90-C} that
	\begin{equation}\label{eq:728:3}
	\sum_{x \geq x_{i_{\max}} } \f_{\uni,k}(x) + \sum_{x \leq x_{-i_{\max}} } \f_{\uni,k}(x) \leq 2 \exp\left( - \frac{T^2}{kn^2} \right).
	\end{equation}
		Using piecewise monotonicity of $\f_{\uni,k}$ again, we observe that
	\begin{align*}
			\sum_{x \in R} \f_{\uni,k}(x) \leq& \sum_{i=-i_{\max}+1}^{0} \abs{ R \cap [x_{i-1}, x_{i})} \f_{\uni,k}(x_{i}) 
			\\ & + \sum_{i=0}^{i_{\max}-1} \abs{ R \cap [x_i, x_{i+1})} \f_{\uni,k}(x_{i}) + 2 \exp\left( - \frac{T^2}{kn^2} \right).
	\end{align*}
Using \eqref{eq:728:2}, we now conclude that
	\begin{align*}
			\sum_{x \in R} \f_{\uni,k}(x) &\leq 
			\sum_{i=-i_{\max}}^{i_{\max}} t (\rho+\eta) \f_{\uni,k}(x_{i}) + t (\rho+\eta) f(x_0)  + 2 \exp\left( - \frac{T^2}{kn^2} \right)
			\\ & \leq \rho  + \eta + O(t\rho/\sqrt{k}n)  + 2 \exp\left( - \frac{T^2}{kn^2} \right).
	\end{align*}
The estimate in the opposite direction,
	\begin{align*}
			\rho - \sum_{x \in R} \f_{\uni,k}(x) \leq  \eta + O(t\rho/\sqrt{k}n)  + 2 \exp\left( - \frac{T^2}{kn^2} \right).
	\end{align*}
 follows by a fully analogous reasoning, the only significant difference being that instead of \eqref{eq:728:2} we use the estimate
\begin{equation*}
	t \sum_{\abs{i} > i_{\max} } \f_{\uni,k}(x_i) \leq 2 \exp\left( - \frac{T^2}{kn^2} \right). \qedhere
	\end{equation*}
\end{proof}

\subsection{}
Another component of the proof of Proposition \ref{EXC:prop:P-wedge-s-in-S} is the observation that sums $\sum_{i=u}^{v-1} \bb a_i$ are close to their average values, provided that $v-u$ is larger than $\log n$. Moreover, this behaviour is preserved if $\bb a_i$ are chosen from a set $A \subset [n]$ that is sufficiently large and obeys certain mild additional assumptions. To make this precise, we introduce the following piece of notation.

Let $I \subset [n]$ be a set and let $b = (b_i)_{i \in I}$ be a sequence  indexed by $I$, taking values in $[n]$. We will say that $b$ is \emph{regular}, or $\cR(b)$ for short, if 
	\begin{equation} 
		\abs{ \sum_{i \in J} b_i - \abs{J} \ffrac{(n+1)}{2} } \leq \sqrt{\abs{J}} n \log n.
		\label{EXC:eq:R} 
	\end{equation}
	for each interval $J \subset I$.
It is a standard consequence of the Hoeffding inequality (cf.{} Corollary \ref{EXC:cor:Hoeffding}) that for any interval $I \subset [n]$,
\[ 
\PP\bra{ \neg \cR( (\bb a)_{i \in I} ) } \leq 2\exp(-2\log^2 n).
\]
Similarly, for a set $A \subset [n]$ we will say that $A$ is \emph{regular}, or $\cR(A)$ for short, if for an ordering $\bb b = (\bb b_j)_{j=1}^{\abs{A}}$ of $A$ chosen uniformly at random we have
	\begin{equation}\label{eq:EXC:def-R} 
\PP( \neg \cR(\bb b) ) \leq \exp(-\log^2 n).
	\end{equation} 
These definitions are set up so that a random set is regular with high probability, as shown by the following lemma.

\begin{lemma}\label{EXC:lem:P-R-uniform}
	Let $n \geq $ be an integer, as introduced above, let $\alpha \in (0,1]$ with $\alpha n \in \NN$, and let $\bb A$ be a subset of $[n]$ of size $\alpha n$, chosen uniformly at random. Then
\[	\PP( \neg \cR(\bb A) ) \leq \exp(-\log^2 n + O(\log n)). \]
\end{lemma}
\begin{proof}
	Let $\bb b = (\bb b_i)_{i=1}^{\alpha n}$ be an ordering of $\bb A$ chosen uniformly at random. Note that $\bb b$ has the same distribution as a sequence of $\alpha n$ entries sampled without replacement from $[n]$. Hence, by the Hoeffding inequality, for any $u,v$ with $1 \leq u < v \leq \alpha n +1$ we have
	\[
		\PP\bra{ \abs{ \sum_{i=1}^{v-1} \bb b_i - (v-u)(n+1)/2} \geq \sqrt{(v-u)} n \log n } \leq 2 \exp(-2\log^2 n).
	\]
By the union bound we now obtain 
\[ 
\PP( \neg \cR(\bb b)) \leq \exp(-2\log^2 n + O(\log n)).
\]
 It follows by the Markov bound that
\begin{align*}
	\PP( \neg \cR(\bb A)) 
	&= \PP\braBig{ \PP\bra{\neg \cR(\bb b) | \bb A} > \exp(-\log^2 n) }
	\\ &\leq \exp(\log^2 n) \EE\brabig{ \PP\bra{\neg \cR(\bb b) | \bb A}} 
	\leq \exp(-\log^2 n + O(\log n)). \qedhere
\end{align*}	
\end{proof} 

Later in the argument, we will also encounter concatenations of regular sequences. While that concatenation of regular sequences is not guaranteed to be regular, a slightly weaker statement is true. If $(I_{j})_{j=1}^M$ is a sequence of intervals, $J \subset I = \bigcup_{j=1}^{M} I_j$ is an interval and $(b_i)_{i \in I}$ is a sequence such that for each $j$, the restriction $b|_{I_j} = (b_i)_{i \in I_{j}}$ is regular then    
	\begin{equation} 
		\abs{ \sum_{j \in J}b_i - \abs{J} \ffrac{(n+1)}{2} } \leq \sqrt{M \abs{J}} n \log n.
		\label{EXC:eq:R2} 
	\end{equation}
This follows directly from the application of the definition of regularity to $J \cap I_j$ for each $1 \leq j \leq M$, and the inequality between the quadratic and arithmetic means.
\begin{lemma}\label{lem:EXC:regular-interval}
	Let $n \geq 1$ be an integer, as introduced above, and let $a = (a_i)_{i=1}^{k}$ be a sequence taking values in $[n]$. Assume that there exists a partition $[k] = \bigcup_{j=1}^M I_j$ such that $\cR(a|_{ I_j})$ holds for each $1 \leq j \leq M$. Let $K \subset \NN$ be a interval and assume that $\max K \leq \ffrac{k(n+1)}{2} - \sqrt{Mk} n\log n$  and $\abs{K} \geq M n \log^2 n$. Then	
	\begin{equation} 
\abs{K \cap S_1(b)} = \frac{2 \abs{K} }{n+1} + O\bra{M\sqrt{\abs{K}}\log n/\sqrt{n}}.		\label{EXC:eq:J/K} 
	\end{equation}
\end{lemma}
\begin{proof}
	Let $J$ denote the set of $v$ with $1 \leq v \leq k+1$ such that $\sum_{i=1}^{v-1} a_i \in K$. Note that $\abs{J} = \abs{K \cap S_1(b)}$ and $J$ is an interval. On one hand, by regularity we have
	\begin{equation}\label{eq:610:3}
		\abs{ \sum_{i \in J} a_i - \frac{(n+1)\abs{J}}{2} } \leq \sqrt{M \abs{J}} n \log n.
	\end{equation}
	As a consequence, through a series of elementary manipulations we obtain
	\begin{equation}\label{eq:610:1}
		\abs{ \sqrt{\abs{J}} - \sqrt{ \frac{2}{n+1}\sum_{i \in J} a_i + M \log^2 n} } \leq \sqrt{M} \log n.
	\end{equation}
	
	On the other hand, we note that the gaps between consecutive elements of $S_1(a)$ are at most $n$. Moreover,
	\[  \min S_1(a) \leq n < \min K +n \] 
and (by another application of regularity)
	\[ \max S_1(a) = \sum_{i=1}^k a_i \geq \frac{k(n+1)}{2} - \sqrt{Mk} n\log n \geq \max K.\]
As a consequence, $\min (S_1(a) \cap K) = \min K + O(n)$ and $\max (S_1(a) \cap K) = \max K + O(n)$, and hence 
	\begin{equation}\label{eq:610:2}
		\sum_{i \in J} a_i = \max (S_1(a) \cap K) - \min (S_1(a) \cap K) + O(n) = \abs{K} + O(n)
	\end{equation}
Inserting \eqref{eq:610:2} in \eqref{eq:610:1} leads to
	\begin{equation}\label{eq:610:4}
		\sqrt{\abs{J}} = \sqrt{ \frac{2\abs{K}}{n+1} } + O(\sqrt{M} \log n).
	\end{equation}
Taking squares in \eqref{eq:610:4} yields \eqref{EXC:eq:J/K}.
\end{proof}

\subsection{}
We now have gathered all the ingredients needed for the proof of Proposition \ref{EXC:prop:P-wedge-s-in-S}. Recall that for a sequence $a = (a_i)_{i\in I}$ and an index set $J \subset I$ the restriction of $a$ to $J$ is denoted by $a|_J = (a_i)_{i \in J}$.

\begin{proof}[Proof of Proposition \ref{EXC:prop:P-wedge-s-in-S}]
	Put $m := 2 \floor{n^{\frac{3}{19}}}$ and  $\a = \e/M$. Assume without loss of generality that $u_1 > u_2 > \dots > u_M$, and put $v_k := u_k+m$ for all $1 \leq k \leq M$. We assume for the sake of brevity that $\alpha n$ is an integer (the argument without this assumption is fully analogous) and that $n$ is sufficiently large in terms of $M$.
	
	For $k$ with $1 \leq k \leq M$ we define the sets
	\[I_k = \left[u_k,v_k\right) \cup \left((k-1)(\alpha n - m),k(\alpha n - m)\right]\] and we additionally put $I_0 = [n] \setminus\bigcup_{k=1}^M I_k$. The sets $I_k$ ($0 \leq k \leq M$) are pairwise disjoint subsets of $[n]$; this is the case since $v_{k} < u_{k+1}$ ($1 \leq k < M$) thanks to the well-separation assumption, and $v_M \leq n-\e n + m \leq n$, and $u_1 \geq \e n > M(\alpha n -m)$. Moreover, for each $1 \leq k \leq M$ the set $I_k$ has cardinality $\alpha n$ and consists of the union of $[u_k,v_k)$ and a remainder part contained in $[1,\e n)$. 
	
	We will, informally, think of $\bb a$ as being selected in two stages: first we select which entries appear in $\bb a|_{I_k}$ for $0 \leq k \leq M$, and second we choose the order in which they appear. Formally, for $0 \leq k \leq M$ we define the random sets $\bb A_k = \left\{ \bb a_i \ : \ i \in I_k \right\}$. By Lemma \ref{EXC:lem:f-A,m=uni} and Lemma \ref{EXC:lem:P-R-uniform} respectively, each of the conditions $\sU(\bb A_k)$ ($0 \leq k \leq M$) and $\cR(\bb A_k)$ ($0 \leq k \leq M$) holds with probability $1-n^{-\omega(1)}$. Hence, it will suffice to show that for any sets $A_k$ ($0 \leq k \leq M$) satisfying the conditions $\sU(A_k)$ and $\cR(A_k)$ mentioned above it holds that
\begin{equation}\label{EXC:eq:49}
	\PP\bra{ \bigwedge_{k=1}^M {s_k \in S_{u_k}(\bb a)} \ \middle| \ \bb A_k = A_k  \text{ for } 0 \leq k \leq M } = \bra{\frac{2}{n}}^M \bra{1+ O_M(n^{-\frac{1}{40}})}.
\end{equation}
For each $0 \leq k \leq M$, directly from the definition of $\cR(A_k)$ in \eqref{eq:EXC:def-R} we have
\begin{equation}\label{EXC:eq:941-a}
\PP\Big(\neg \cR( \bb a|_{I_k} ) \ \Big| \ \bb A_k = A_k\Big) = n^{-\omega(1)},
\end{equation}
and the same estimate remains valid if some of the entries of $\bb a$ outside of $I_k$ are specified. This will allow us to freely replace each event $s_k \in S_{u_k}(\bb a)$ in \eqref{EXC:eq:49} with the event $s_k \in S_{u_k}(\bb a) \wedge \cR(\bb a|_{I_k})$ at the cost of introducing a negligible error term of the order $n^{-\omega(1)}$.

We will next prove the following estimate: Let $1 \leq k \leq M$ and suppose that for all $l$ with $0 \leq l < k$, orderings $(a_i)_{i\in I_l}$ of $A_l$ are specified and $\cR\bra{(a_i)_{i\in I_l}}$ holds; then
	\begin{equation}\label{EXC:eq:50}
		\PP \left( s_k \in S_{u_k}(\bb a)  
		\ \middle| \
		 \begin{matrix}
		\bb A_k = A_k \text{ for } 0 \leq k \leq M\text{,} \\
		 \bb a|_{I_l} = a|_{I_l} \text{ for } 0 \leq l < k
		 \end{matrix} 
		 \right)   = \frac{2+O_M(n^{-\frac{1}{40}})}{n}.
	\end{equation}
Once this has been proved, the proposition follows by a standard inductive argument. More precisely, for $L = 0,1,\dots,M$ we prove that
\begin{equation}\label{EXC:eq:49A}
	\PP\bra{ \bigwedge_{k=1}^L {s_k \in S_{u_k}(\bb a)} \ \middle| \ \bb A_k = A_k  \text{ for } 0 \leq k \leq M } = \bra{\frac{2}{n}}^L \bra{1+ O_M(n^{-\frac{1}{40}})}.
\end{equation}
The case $L = 0$ is trivial, while the case $L = M$ is \eqref{EXC:eq:49}. The inductive step amounts to expressing the left hand side of \eqref{EXC:eq:49A} as the weighted sum of probabilities that take the form \eqref{EXC:eq:50} with $k = L$, but without the guarantee that the conditions $\cR\bra{(a_i)_{i\in I_l}}$ hold for $l < L$. Where possible, we estimate the summands using \eqref{EXC:eq:50}, and we bound the remainder using \eqref{EXC:eq:941-a}. This, together with the inductive assumption, yields the formula \eqref{EXC:eq:49A}.

We proceed to the proof of \eqref{EXC:eq:50}. Fix $k$ ($1 \leq k \leq M$) and the values of $a_i$ for $i \in \bigcup_{l=0}^{k-1} I_l$. 
Under the conditions in \eqref{EXC:eq:50}, $s_k \in S_{u_k}(\bb a)$ if and only if $s_k \in \sum_{i=u_k}^{v_k-1} \bb a_i + S_{v_k}(a)$, which in turn is equivalent to $\sum_{i=u_k}^{v_k-1} \bb a_i \in s_k -  S_{v_k}(a)$. (Note that $\sum_{i=u_k}^{v_k-1} \bb a_i \leq mn < \e \binom{n+1}{2} \leq s_k$.) Put $R = s_k -  S_{v_k}(a)$ and let $\bb x$ be the sum of $m$ independent variables uniformly distributed on $[n]$. 
By the definition of $\sU(A_k)$ in \eqref{eq:EXC:def-of-U(A)} we have
	\begin{equation}\label{EXC:eq:51}
\PP\left( \sum_{i=u_k}^{v_k-1} \bb a_i \in R \ \middle| \ \bb A_k = A_k \right) = \PP( \bb x \in R ) + O\bra{n^{-\frac{3}{2}} m^{3} \e^{-2} \log n}.
	\end{equation}
	
Put $t := \floor{m^{\frac{1}{3}}n}$ and note that $M n \log^2 n \leq t \leq s_k/2$. By Lemma \ref{lem:EXC:regular-interval}, for each interval $K \subset \left[0, s_k \right]$ of length $t$, we have
\[\abs{K \cap R } = \frac{2 t}{n+1}+ O(\eta t),
\text{ where } \eta = \eta(t) = \frac{M\log n}{\sqrt{t n}}.\]
(Here we use the fact that $s_k \leq (n-u_k)(n+1)/2 - \e \binom{n+1}{2}$.) 
Applying Lemma \ref{lem:EXC:P-x-in-R} and noting that $\eta < 2/(n+1)$, we conclude that
	\begin{equation}\label{EXC:eq:52}
	\PP( \bb x \in R) = \frac{2}{n+1} + O(\eta) + O\bra{\exp\bra{-\frac{s_k^2}{4mn^2}} } + O\bra{\frac{t}{\sqrt{m} n^2}}.
	\end{equation}
The exponential error term is much smaller than the other two, and hence can be disregarded. The two remaining error terms both have size $O_M(m^{-\frac{1}{6}} n^{-1} \log n)$. Combining \eqref{EXC:eq:51} and \eqref{EXC:eq:52} we conclude that the probability on the right hand side of \eqref{EXC:eq:50} is equal to
	\begin{align*}\label{EXC:eq:55}
	\frac{2}{n+1} 
	+ O\bra{n^{-\frac{3}{2}} m^{3} \e^{-2} \log n} 
	+ O_M\bra{n^{-1} m^{-\frac{1}{6}} \log n}
	= \frac{2}{n} + O_{M}(n^{-\frac{39}{38}+o(1)}).
	\end{align*}
This implies  \eqref{EXC:eq:50}, even with a slightly better error term.	
\end{proof} 
\section{Higher moments}\label{section:HM}

\subsection{} 
In this section, we obtain an asymptotic formula for the second moment $\EE\abs{S(\bb a)}^2$, where like in the previous section $\bb a$ is an element of $\Sym([n])$ chosen uniformly at random and $n$ is a large integer. As a consequence, we prove the concentration around the mean for $\abs{S(\bb a)}$. By a standard application of the second moment method, 
\begin{equation}\label{eq:HM:001}
	\PP\bra{ \abs{\abs{S(\bb a)}^{\phantom{|}\!\!\!} - \EE\abs{S( \bb a)}} > \delta n^2 } \leq \frac{\EE{\abs{S(\bb a)}^2} - (\EE\abs{S(\bb a)})^2}{\delta^2 n^4}
\end{equation}
for any $\delta > 0$. 
Hence, Theorem \ref{thm:INT:B} will follow directly from the following result.

\begin{proposition}\label{HM:prop:E(S(a)^p)}
	Let $n$ and $\bb a$ be as introduced above. Then 
\begin{equation}\label{eq:HM:002}	
	\EE \abs{S(\bb a)}^2 = (c^2 + o(1)) n^{4},
\end{equation}
where $c = \ffrac{(1+e^{-2})}{4}$. 
\end{proposition}

Because $\abs{S(\bb a)}/n^2$ is bounded for $a \in \Sym([n])$, the concentration around the mean, as stated in Theorem \ref{thm:INT:B}, implies the asymptotic formula for the higher moments $\EE \abs{S(\bb a)}^p$ for all $p\geq 1$, namely 
\[ \EE \abs{S(\bb a)}^p = (c^p +o(1))n^{2p}.\]
This formula also follows directly from a slight adaptation of the argument we give here.

We will argue along the similar lines as in Section \ref{section:EXC}. The only missing ingredient we need in order to compute the higher moments of $\abs{S(\bb a)}$ is an analogue of Lemma \ref{EXC:lem:kill-bad-U} which is applicable to sequences of indices $u_1,u_2,\dots,u_M$ that include repeated entries. Proof of this result is less trivial than it might appear at first, and occupies the most of this section. The following example hints at the source of complications.

\begin{example}\label{exple:EXC:001}
	Pick $u_1,u_2,u_3$ and $s_1,s_2,s_3$ with $\frac{1}{3}n \leq u_1,u_2,u_3 \leq \frac{2}{3}n$ and $\frac{1}{10} \binom{n+1}{2} \leq s_1,s_2,s_3 \leq \frac{2}{10} \binom{n+1}{2}$ satisfying the following constraints: 
	
	\begin{align*}
	u_1 = u_3 < u_2, \quad s_3 = s_1 + s_2, \quad
	\EE \braBig{\sum_{i=u_1}^{u_2-1} \bb a_i } = (u_2-u_1)(n+1)/2 = s_1.
\end{align*} 
	In analogy with Lemma \ref{EXC:lem:kill-bad-U}, one might expect that $\PP\bra{\bigwedge_{k=1}^3 s_k \in S_{u_k}(\bb a)} \leq n^{-3+o(1)}$. However, in order for $\bigwedge_{k=1}^3 s_k \in S_{u_k}(\bb a)$ to hold, it is sufficient that 
	\[ s_1 = \sum_{i=u_1}^{u_2-1} \bb a_i \ \text{ and  } \ s_2 \in S_{u_2}(\bb a).\]
However, one can check (since we only use this example as a motivation, we omit proof) that 
	\[ \PP\braBig{ s_1 = \sum_{i=u_1}^{u_2-1} \bb a_i } = n^{-\frac{3}{2} + o(1)} \ \text{ and  } \ \PP\bra{ s_2 \in S_{u_2}(\bb a)} = n^{-1+o(1)}.\]
Additionally, the two events are approximately independent, and hence
	\[ \PP\braBig{ s_1 = \sum_{i=u_1}^{u_2-1} \bb a_i \text{ and  }  s_2 \in S_{u_2}(\bb a)} = n^{-\frac{5}{2}+o(1)},\]
contrary to the expectation based on Lemma \ref{EXC:lem:kill-bad-U}.
\end{example}

\subsection{} 

As Example \ref{exple:EXC:001} suggests, the direct generalisation of Lemma \ref{EXC:lem:kill-bad-U} is not possible; instead we prove an averaged version.  As before, we will only apply the following lemma in the case when $s_k$ take at most two distinct values, but this constraint does not significantly simplify the reasoning. Throughout this section, we let $\e = \e(n) > 0$ denote a positive real, subject to the same constraints as introduced in Section \ref{section:EXC}.

\begin{lemma}\label{EXC:lem:kill-bad-U-2}
	Fix an integer $M \geq 1$. Let $n,\bb a$ and $\e$ be as introduced above, and for $1 \leq k \leq M$ let $s_k = \sigma_k \binom{n+1}{2}$ be integers with $\e \leq \sigma_k \leq 1-\e$. Let $\cU$ denote the  set of all increasing sequences $u= (u_k)_{k=1}^M$ such that $\e n \leq u_k \leq (1-\sigma_k - \e)n$ for all $k$ with $1 \leq k \leq M$, and there exist $k,l$ with $1 \leq k < l \leq M$ such that $u_k = u_l$. Then
	\begin{equation}\label{EXC:eq:71}
		\sum_{u \in \cU} \PP\bra{ \bigwedge_{k=1}^M s_k \in S_{u_k}(a)} = O_M\bra{ \ffrac{\bra{ \frac{\log n}{\e}}^M}{n}},
	\end{equation}
	where the implicit constant depends only on $M$.
\end{lemma}

Once we prove the above lemma, we will have all the tools necessary to compute the second moment of $\abs{S(\bb a)}$. The argument is very similar to the one we used to compute $\EE \abs{S(\bb a)}$. In places where the arguments are virtually identical, we give only the outline, and refer the Reader to the relevant parts of Section \ref{section:EXC} for details. 

\begin{proof}[Proof of Proposition \ref{HM:prop:E(S(a)^p)} assuming Lemma \ref{EXC:lem:kill-bad-U-2}]
We will show that for any integers $s_1,s_2$ with $s_1 \neq s_2$ and $s_j = \sigma_j \binom{n+1}{2}$, $\e \leq \sigma_j \leq 1-\e$ it holds that
\begin{equation}\label{EXC:eq:61}
	\PP\brabig{  s_1,s_2 \not \in S'(\bb a)} = e^{-2+2\sigma_1} e^{-2+2\sigma_2} + o(1).
\end{equation}
where $S'(\bb a)$ is defined as in \eqref{eq:EXC:002} in Section \ref{section:EXC} and the error term is uniform with respect to the choice of $s_1,s_2$. Once this is established, by Proposition \ref{EXC:prop:P-s-in-S} it also follows that
\[\PP\brabig{ s_1,s_2 \in S'(\bb a)} = (1-e^{-2+2\sigma_1}) (1-e^{-2+2\sigma_2}) + o(1).\]
and hence by a Riemann integral approximation argument similar to the one in the proof of Proposition \ref{EXP:prop:main}, 

\begin{align*}
	\EE\abs{S'(\bb a)}^2 &= \sum_{s_1,s_2} \PP\brabig{ s_1,s_2 \in S'(\bb a)} + o(n^4)
	\\ &= n^4 \int_{\e}^{1-\e}\int_{\e}^{1-\e} (1-e^{-2+2\sigma_1}) (1-e^{-2+2\sigma_2}) d \sigma_1 d\sigma_2 + o(n^4) 
	\\&= (c^2 + o(1))n^4,
\end{align*} 
where the sums run over all $s_1,s_2$ as specified above, and $c = \ffrac{1 + e^{-2}}{4}$. Finally, we note that almost surely $\abs{S(\bb a)} = \abs{S'(\bb a)} + o(n^2)$, as we have already shown in subsection \ref{SSec:proof-of-thm-B}. Hence, it remains to prove \eqref{EXC:eq:61}.

Using the inclusion--exclusion formula like in subsection \ref{SSec:inclusion-exclusion}, we may rewrite the probability in \eqref{EXC:eq:61} as
\begin{equation}\label{EXC:eq:62}
	\sum_{M_1=0}^\infty (-1)^{M_1} \sum_{\abs{U_1} = M_1} \sum_{M_2=0}^\infty (-1)^{M_2} \sum_{\abs{U_2} = M_2} \PP\bra{ \bigwedge_{j=1}^2 \bigwedge_{k=1}^{M_j} { s_j \in S_{u_{j,k}}(\bb a) }},
\end{equation}
where the inner sums are taken over all choices of $U_{j} = \{u_{j,k}\}_{k=1}^{M_j} \subset [n]$ such that $\e n < u_{j,k} < (1-\e-\sigma_j) n$ ($j \in \{1,2\}$). 
Let $N$ be a large even integer, and put $N_1 = N$ and 
\[	
	N_2(M_1) = 
	\begin{cases}
		N & \text{ if } M_1\equiv 0 \pmod{2} \\
		N+1 & \text{ if } M_1 \equiv 1 \pmod{2}.		
	\end{cases}
\] 
Then the sum in \eqref{EXC:eq:62} is bounded from above by the truncated sum
\begin{equation}\label{EXC:eq:62a}
	\sum_{M_1=0}^{N_1} (-1)^{M_1}
	\sum_{M_2=0}^{N_2(M_1)} (-1)^{M_2} 
	\sum_{\substack{ \abs{U_1} = M_1 \\ \abs{U_2} = M_2}}
	\PP\bra{ \bigwedge_{j=1}^2 \bigwedge_{k=1}^{M_j} { s_j \in S_{u_{j,k}}(\bb a) }}.
\end{equation}

With the same definitions, for odd values of $N$ the expression in \eqref{EXC:eq:62a} gives a lower bound for \eqref{EXC:eq:62}. Hence, like in subsection \ref{SSec:P-wedge-s-in-S}, to find asymptotics for the sum \eqref{EXC:eq:62}, it will suffice to find asymptotics for each of the innermost sums in \eqref{EXC:eq:62a}.

Fix $M_1,M_2$ and consider one such sum. Put $M = M_1+M_2$. Using Lemma \ref{EXC:lem:kill-bad-U-2}, we may disregard the contribution $n^{-1+o(1)}$ coming from $U_1, U_2$ such that $U_1 \cap U_2 \neq \emptyset$, and using Lemma \ref{EXC:lem:kill-bad-U} we may disregard the contribution $n^{-\frac{1}{2}+o(1)}$ coming from $U_1,U_2$ with $U_1 \cap U_2 = \emptyset$ such that $U_1 \cup U_2$ is not well-separated (as defined in \eqref{EXC:cond:U-reg}). For remaining $U_1,U_2$, by Proposition \ref{EXC:prop:P-wedge-s-in-S} we have
\begin{equation}\label{EXC:eq:63}
	\sum_{\substack{ \abs{U_1} = M_2 \\ \abs{U_1} = M_2 }}
	\PP\bra{ \bigwedge_{j=1}^2 \bigwedge_{k=1}^{M_j} \bra{ s_j \in S_{u_{j,k}}(\bb a) }}
 = \bra{ \frac{2}{n} }^{M}\bra{1 + O_M\bra{n^{-\frac{1}{40}}}}.
\end{equation}
The number of choices of $U_1$ and $U_2$ for given values of $M_1,M_2$ is 
\[ \frac{(1-\sigma_1)^{M_1}(1-\sigma_2)^{M_2}}{M_1! M_2!}n^{M}+O_M\bra{\e n^M }.\]
Thus the inner sum in \eqref{EXC:eq:62a} is, up to an error of size $O_M\bra{\e}$, equal to
\begin{equation}\label{EXC:eq:64}
\bra{ \frac{2}{n} }^{M} \frac{(1-\sigma_1)^{M_1}(1-\sigma_2)^{M_2}}{M_1!M_2!}n^{M} 
= \frac{(2-2\sigma_1)^{M_1}(2-2\sigma_2)^{M_2}}{M_1!M_2!}.
\end{equation}
Thus, for any large even integer $N$ the sum in \eqref{EXC:eq:62a} is bounded from above by
\begin{equation}\label{EXC:eq:65}
	\sum_{M_1=0}^{\infty} \sum_{M_2=0}^{\infty} (-1)^{M_1+M_2}
	\frac{(2-2\sigma_1)^{M_1} (2-2\sigma_2)^{M_2} }{M_1! M_2!}
	+ \frac{2^N}{N!} + O_N(\e).
\end{equation}
The sum in \eqref{EXC:eq:65} is simply the Taylor expansion of $e^{-2+2\sigma_1}e^{-2+2\sigma_2}$, and letting $N \to \infty$ slowly with $n$ we may combine the two error terms into an error term $o(1)$ (uniformly in $s_1,s_2$). It follows that
\begin{equation}\label{EXC:eq:61x}
	\PP\brabig{  s_1,s_2 \not \in S'(\bb a)} \leq e^{-2+2\sigma_1} e^{-2+2\sigma_2} + o(1).
\end{equation}
Repeating the same argument for $N$ odd, we obtain the corresponding inequality in reverse direction, which combined with \eqref{EXC:eq:61x} yields \eqref{EXC:eq:61} and finishes the proof.
\end{proof}

\subsection{}
 We devote the remainder of this section to the proof of Lemma \ref{EXC:lem:kill-bad-U-2}. Fix $M \geq 1$, and let $s_k$ $(1 \leq k \leq M$) be as in the formulation of the lemma. From this point, we allow all implicit constants to depend on $M$. 

In order to record the possible linear dependencies between the sums $\sum_{i=u}^{v-1} \bb a_i$ ($1 \leq u < v \leq n+1$) and the target values $s_k$ ($1 \leq k \leq M$) we introduce a structure which we call a ``type graph'' (it is a graph which encodes the type of dependency).

Recall that a $\ZZ$-labelled directed graph $G$ consists of the following data:
\begin{enumerate}
	\item the vertex set $V = V_G$;
	\item the set of edges $E = E_G \subset V^2$;
	\item the edge labels $r = r_G \in \ZZ^{E}$.
\end{enumerate}
We will be particularly interested in graphs whose vertex set is a subset of $\NN$. In this case, we define the total edge length of $G$ as  
\begin{equation}\label{eq:def-LG}
L_G = \sum_{(k,l) \in E} \abs{k-l}.
\end{equation}

\begin{definition}
	A \emph{type graph} $G$ is a $\ZZ$-labelled directed graph such that
{\begin{enumerate}
	\item the vertex $V = V_G$ is the initial segment $[N]$ for some integer $N = N_G$;
	\item the edges are directed so that if $(k,l) \in E$ then $k < l$;
	\item the edge labels $r = r_G \in \ZZ^{E}$ take the form $r_{e} = \sum_{k=1}^M q_{e,k} s_k$ for some coefficients $q_{e,k} \in \ZZ$ ($e \in E_G,\ 1\leq k \leq M$) obeying the constraint
\begin{equation}
 \sum_{k=1}^M \abs{q_{e,k}} \leq 2^{N^2-L} \ \text{ for each } \ e \in E.
 \end{equation}
\end{enumerate}
}\end{definition}
The significance of the last condition will become clear in the course of the argument. When there is no risk of confusion, we omit the subscript $G$ in $N_G$, $E_G$, $r_G$ and $L_G$, and write simply $N,E,r$ and $L$.

For a type graph $G$, a permutation $a \in \Sym([n])$ is a permutation of $[n]$ and a sequence $w = (w_k)_{k =1}^N$ taking values in $[n+1]$, we introduce the event $\cA_G(a,w)$, specified by
\begin{equation}\label{EXC:eq:78}
	\cA_G(a,w) \iff \text{ $w$ is increasing and } \sum_{i=w_k}^{w_l-1} a_i = r_{{k,l}}, \text{ for all } \bra{k,l} \in E.
\end{equation}

Two type graphs $G$ and $H$ are said to be \emph{equivalent} if they define the same events and additionally have the connected components. More precisely, $G \sim H$ if $N_G = N_H = N$, the events $\cA_G(a,w)$ and $\cA_H(a,w)$ are equivalent for each $a \in \Sym([n])$ and $w \in [n+1]^N$, and the graphs $([N],E_G)$ and $([N],E_H)$ (or, strictly speaking, the induced undirected graphs) have the same connected components. This clearly defines an equivalence relation.
We say that a type graph $G$ is \emph{satisfiable} if it is possible to satisfy the corresponding event $\cA_G$, i.e., if there exist $a \in \Sym([n])$ and $w \in [n+1]^N$  such that $\cA_G(a,w)$ holds. The property of being satisfiable is preserved under equivalence. Type graphs which are not satisfiable will not play a significant role.  
We will call a type graph $G$ \emph{minimal} if it is satisfiable and has minimal total edge length $L_G$ within its equivalence class. Each satisfiable equivalence class has at least one minimal element.

\begin{example}\label{exple:EXC:002}
	Let $(u_k)_{k=1}^{3}$ and $(s_k)_{k=1}^3$ be as in Example \ref{exple:EXC:001}. Consider the type graph $G$ with $N = 3$, $E = \{ (1,2), (2,3), (1,3) \}$ and labels $r_{1,2} = s_1$, $r_{2,3} = s_2$, $r_{1,3} = s_3$. Then for $w = (u_1,u_2,v)$ the event $\cA(a, w)$ is equivalent to the system of equations
\begin{equation}\label{eq:EXC:010}
		\sum_{i=u_1}^{u_2-1} a_i = s_1,\qquad \sum_{i=u_2}^{v-1} a_i = s_2,\qquad \sum_{i=u_1}^{v-1} a_i = s_3. 
\end{equation}
	Hence, $\bigwedge_{k=1}^3 s_k \in S_{u_k}(\bb a)$ holds if and only if $\cA_G(\bb a, (u_1,u_2,v))$ holds for some $v$ with $u_2 < v \leq n+1$.
	Note that any of three equations in \eqref{eq:EXC:010} can be eliminated, whence $G$ is equivalent to any of the type graphs obtained by removing one of the edges. If $G'$ is described in the same manner as $G$ but with $s'_3 \neq s_1' + s_2'$ then $G'$ is not satisfiable.
\end{example}

\begin{remark}
	Recall that the definition of equivalence of type graphs includes the requirement that if $G \sim H$ then $G$ and $H$ have the same connected components. Although we believe that in most cases of interest this follows from the requirement that the conditions $\cA_G$ and $\cA_H$ are equivalent, there are some pathological examples when this is not the case. For instance, if $G$ is a type graph with $E_G = \{(1,3),(2,4)\}$ and $r_{1,3} = 3,\ r_{2,4} = 4$, and $H$ is the type graph obtained by adding to $G$ the edge $(2,3)$ with $r_{2,3} =  1$ then one can check that the conditions $\cA_G$ and $\cA_H$ are equivalent, while the connected components of $G$ and $H$ are different. Problems also arise when $N = n+1$, since the edge $(1,n+1)$ with label $r_{1,n+1} = \binom{n+1}{2}$ may be freely added without altering the corresponding condition. 
	
In order to avoid the complications described above, we simply include the equality of connected components in the definition of equivalence. Alternatively, one could pose the definition of equivalence of type graphs $G$ and $H$ requiring not only that the conditions $\cA_G$ and $\cA_H$ are equivalent, but also that the systems of equations in \eqref{EXC:eq:78} defining $\cA_G(a,w)$ and $\cA_H(a,w)$ for a given choice of $w$ are equivalent as, say, systems of polynomial equations in variables $(a_i)_{i=1}^n$. 
\end{remark}

\subsection{}
We record several basic properties of minimal type graphs. 
\begin{lemma}\label{lem:HM:001}
	With the above notation, if $G$ is a minimal type graph then the underlying graph $([N_G],E_G)$ is a union of pairwise disjoint paths.
\end{lemma}
\begin{proof}
We will show that each vertex is the initial point and the end point of at most one edge. Once this is accomplished, it is easy to see that $([N],E)$ takes the required form. We will only deal with end points since the statement for initial points if fully analogous.

Suppose for the sake of contradiction that $E$ contains the edges $(k,m)$ and $(l,m)$ for some $k,l$ with $1 \leq k < l < m \leq N$. Consider the type graph obtained from $G$ by removing the edge $(k,m)$ and adding the edge $(k,l)$ with label $r_{k,l} = r_{k,m} - r_{l,m}$. (If $G$ already contains the edge $(k,l)$ then necessarily $r_{k,l} = r_{k,m} - r_{l,m}$ since $G$ is satisfiable; in this case we simply remove the edge $(k,m)$.) Note that $L_H \leq L_G-1$, so $r_{k,l}$ defined above is a feasible label (that is, it satisfies \eqref{eq:def-LG}). The events defined by $G$ and $H$ are equivalent, and $G$ and $H$ have the same connected components. Hence, $G \sim H$, which contradicts the minimality of $G$ and finishes the argument.
\end{proof}

The following lemma elucidates the connection between the events $\cA_G$ for a type graph $G$ and the events $s_k \in S_{u_k}(\bb a)$ appearing in \eqref{EXC:eq:71}, already hinted at in Example \ref{exple:EXC:002}. The length of a path is the number of edges it contains.

\begin{lemma}\label{lem:HM:002}
	With the above notation, if $(u_k)_{k=1}^M \in \cU$, $a \in \Sym([n])$ and $s_k \in S_{u_k}(a)$ for each $1 \leq k \leq M$, then there exists a minimal type graph $G$ with $N_G < 2M$ and an increasing sequence $(w_l)_{l=1}^{N_G}$ such that $\cA_G(a,w)$ holds, $w_1 = u_1$, $\{w_l\}_{l=1}^{N_G} \supset \{u_k\}_{k=1}^M$, and $G$ contains a path of length $\geq 2$.
\end{lemma}
\begin{proof}
	Since for each $k$ with $1 \leq k \leq M$ we have $s_k \in S_{u_k}(a)$, it follows that there exists $v_k$ with $u_k < v_k \leq n+1$ such that $\sum_{i=u_k}^{v_k-1} a_i = s_{k}$. Let $w = \{w_l\}_{l=1}^{N}$ be the increasing sequence obtained by arranging $u_k$ and $v_k$ ($1 \leq k \leq M)$ in an increasing order, that is $\{w_l\}_{l=1}^{N} = \{u_k\}_{k=1}^M \cup \{v_k\}_{k=1}^M$. Note that $N < 2M$ since $u \in \cU$.
	We define $E$ and $r$ by letting $\bra{l,m} \in E$ if and only if $(w_l,w_m) = (u_k,v_k)$ for some $1 \leq k \leq M$, and putting $r_{{l,m}} = s_k$ for such $k$. In principle, one edge $(l,m)$ may correspond to several values of $k$, but the fact that $s_k = \sum_{i=u_k}^{v_k-1} a_i = \sum_{i=w_l}^{w_m-1} a_i$ implies that the definition of $r_{l,m}$ is well posed. 
	
	It follows directly from the above construction that $\cA_G(a,w)$ holds for the type graph $G$ specified by $N,E$ and $r$. Moreover, $G$ has $M$ edges and strictly fewer than $2M$ vertices, so at least one of its connected component has size $\geq 3$. It remains to replace $G$ with a minimal type graph in the same equivalence class and recall Lemma \ref{lem:HM:001}.	
\end{proof}

\subsection{} We are are now ready to finish the argument.

\begin{proof}[Proof of Lemma \ref{EXC:lem:kill-bad-U-2}]
	Since the number of type graphs $G$ with strictly fewer that $2M$ vertices is $O_M(1)$, it follows from Lemma \ref{lem:HM:002} that that in order to prove Lemma \ref{EXC:lem:kill-bad-U-2}, it will suffice to check that for each minimal type graph $G = ([N],E,r)$ that has fewer than $2M$ vertices and contains a path of length at least $2$ we have
	\begin{equation}\label{EXC:eq:75}
		\PP\bra{ \exists w \in [n+1]^N \text{ s.t. } \cA_G(\bb a, w) \text{ and } w_1 \geq \e n } 
		= O_M\bra{ \ffrac{\bra{ \frac{\log n}{\e}}^M}{n}}.
	\end{equation}

Recall that by Lemma \ref{lem:HM:001}, $G$ is a union of disjoint paths. Let $F \subset [N]$ denote the set of the initial points of these paths, i.e., an integer $l$ with $1 \leq l \leq N$ belongs to $F$ if and only if  $(k,l) \not \in E$ for all $k$ with $1 \leq k \leq N$. Note that $1 \in F$. By the union bound, in order to prove \eqref{EXC:eq:75} it will suffice to show that for each $v = (v_k)_{k \in F} \in [n+1]^F$ with $v_1 \geq \e n$ we have
	\begin{equation}\label{EXC:eq:75A}
		\PP\bra{ \exists w \in [n+1]^N \text{ s.t. } \cA_G(\bb a, w) \text{ and } w|_{F} = v} 
		= O_M\bra{ \ffrac{\bra{ \frac{\log n}{\e}}^M}{n^{\abs{F}+1}}}.
	\end{equation}

Intuitively speaking, we will now select the entries $\bb a_i$ in the order of increasing $i$, starting with $i_0 := \floor{ \e n}$. At ``time'' $t$ with $0 \leq t < n$, we select $\bb a_{i(t)}$ where $t_{\max} := n - i_0$ and
\[
i(t) := 
\begin{cases}
i_0 + t &\text{ if } 0 \leq t \leq t_{\max};\\
t-t_{\max}  &\text{ if } t_{\max} < t \leq n.
\end{cases}
\]
The entries selected at times $t > t_{\max}$ do not play an important role in the argument.
To make this idea precise, for each $t$ with $0 \leq t \leq n$, let $\cF_t$ denote the $\sigma$-algebra generated by $\bb a_{i(t')}$ with $0 \leq t' < t$. In particular $\cF_{0}$ is trivial and $\bb a$ is $\cF_n$-measurable. 

We next introduce a sequence of random variables $\bb w = (\bb w_k)_{k=1}^N$ with the property that, if $\cA_G(\bb a, w)$ holds for some $w \in [n+1]^N$ with $w|_{F} = v$, then $\bb w = w$. We define $\bb w_k$ for $k = 1,2,\dots,N$ inductively. If $k \in F$, we set $\bb w_k = v_k$. Otherwise, there exists exactly one $l$ with $1 \leq l < k$ such that $(l,k) \in E$, and we put
	\begin{equation}\label{EXC:eq:75B}
	\bb w_k = 
		\min \left\{ v \ : 1 \leq v \leq n+1, \ \sum_{j = \bb w_l}^{v-1} \bb a_j \geq r_{l, k} \right\},
	\end{equation}
	where the minimum of an empty set is defined to be $\infty$. In order to prove \eqref{EXC:eq:75A}, it will suffice to show that 
	\begin{equation}\label{EXC:eq:75Z}
		\PP\bra{ \cA_G(\bb a, \bb w) } = O_M\bra{ \ffrac{\bra{ \frac{\log n}{\e}}^M}{n^{\abs{F}+1}}}.
	\end{equation}
Although in general there is no guarantee that $\bb w$ is increasing, for each edge $(l,k) \in E$ we have $\bb w_l < \bb w_k$. It follows from a simple inductive argument that the event $\bb w_k \leq i(t)$ is $\cF_{t}$-measurable for each $k,t$ with $1 \leq  k \leq N$ and $0 \leq t \leq t_{\max}+1$. 

In order to avoid the need to write out increasingly convoluted formulas, for an edge $e = (k,l) \in E$ and integer $t$ with $0 \leq t \leq t_{\max}+1$ we introduce the shorthand
\begin{equation}\label{EXC:eq:75C}
	\bb \Sigma(e,t) = 
	\begin{cases}
	r_{k,l} - \sum_{j= \bb w_l}^{i(t-1)}  \bb a_{j} &\text{ if } \bb w_l < i(t),\\
	\infty &\text{ if } \bb w_l \geq i(t).
	\end{cases}		
	\end{equation}
Hence, $\bb \Sigma(e,t) > 0$ if and only $\bb w_k > i(t)$, and $\sum_{i=\bb w_l}^{\bb w_k -1} \bb a_i = r_{k,l}$ if and only if it is possible to choose $t$ so that $\bb \Sigma(e,t) = 0$. As a function of $t$, $\bb \Sigma(e,t)$ is decreasing for each $e \in E$. Additionally, $\bb \Sigma(e,t)$ is $\cF_t$-measurable for each $e \in E$ and $0 \leq t \leq t_{\max}+1$.

Let $\bb X$ be the set consisting of those $t$ with $0 \leq t \leq t_{\max} + 1$ for which these exists $e \in E$ with $\bb \Sigma(e,t) = 0$. Hence, if $\cA_G(\bb a, \bb w)$ holds, then $\bb X = \{ \bb w_{k} - i_0 \ : \ k \in [N] \setminus F \}$ and in particular $\abs{\bb X} = \abs{E}$; we will estimate the probability of the latter event. For each $t \in \bb X$ there exists $e \in E$ with 
\[ \bb \Sigma(e,t-1) = \bb a_{i(t-1)}+\bb \Sigma(e,t) = \bb a_{i(t-1)} \in [n].\]
This motivates us to introduce the set $\bb Y$, consisting of those $t$ with $0 \leq t \leq t_{\max} + 1$ for which these exists $e \in E$ with $\bb \Sigma(e,t) \in [n]$. In particular, $\bb X \subset \bb Y + 1$. For each $t$, the events $t \in \bb X$ and $t \in \bb Y$ are $\cF_t$-measurable. Intuitively, $\bb Y$ is the set of those ``times'' when we have a chance to add another element to $\bb X$ in the next step. Hence, one can hope to obtain estimates of the size of $\bb X$ in terms of the size of $\bb Y$.

For each $t$ with $0 \leq t \leq t_{\max}$, conditional on $\cF_t$, $\bb a_{i(t)}$ is chosen uniformly at random from a set of $\geq \e n$ possible values, and hence, using the union bound over all possible choices of the edge $e \in E$ which could satisfy $\bb \Sigma(e,t+1) = 0$ we obtain
\begin{equation}\label{EXC:eq:900}
		\PP(t+1 \in \bb X \ | \ \cF_t) \leq 2M 1_{\bb Y}(t)/\e n.
\end{equation}

We next show that \eqref{EXC:eq:900} implies that for each $h \geq \abs{E}$ we have the estimate 
\begin{equation}\label{EXC:eq:901}
	\PP\brabig{ \abs{\bb X} = \abs{E} \wedge \abs{\bb Y} \leq h } = O_{M} \braBig{ \brabig{\frac{h}{\e n}}^{\abs{E}} }. 
\end{equation}
In order to prove \eqref{EXC:eq:901}, define for each set $J \subset \NN$ the random set $\bb Y_J \subset \bb Y$ obtained by selecting from $\bb Y$ the elements at positions in $J$. More precisely, if $\bb Y = \{t_i \, : \, 1 \leq i \leq \abs{\bb Y}\}$ with $t_1 < t_2 < \dots$, then $\bb Y_J := \{t_i \ : \ 1 \leq i \leq \abs{\bb Y},\ i \in J\}$. By the union bound, for any integer $x \geq 0$ we have
\begin{equation}\label{EXC:eq:902}
	\PP \braBig{ \abs{\bb X} = x \wedge \abs{\bb Y} \leq h }
	\leq \sum_{\substack{J \subset [h],\ \abs{J} = x}} \PP\braBig{ \bb X = (\bb Y_J + 1) \wedge \ \max J \leq \abs{\bb Y} \leq h }. 
\end{equation}
The number of summands in the above estimate is $\binom{h}{x} \leq h^{x}$. It follows from repeated application of \eqref{EXC:eq:901} that each of the probabilities on the right hand side of \eqref{EXC:eq:902} is at most $(2M/\e n)^{x}$. It remains to set $x = \abs{E}$.

We next show that $\bb Y$ is very unlikely to have cardinality much larger than $\log n$. 
Let $k \geq 1$ be an integer. If $\abs{\bb Y} > k\abs{E}$ then there exist $u,v$ with $1 \leq u < v \leq n+1$ and $v-u = k$ such that $\sum_{i=u}^{v-1} \bb a_i < n$. Hence, by Hoeffding's inequality (following the same argument as in Lemma \ref{EXC:lem:kill-bad-U})
\begin{equation}\label{EXC:eq:903}
	\PP\brabig{ \abs{\bb Y} > (2\abs{E}+6)\abs{E} \log n } = O(1/n^{\abs{E}}). 
\end{equation}
Combining \eqref{EXC:eq:901} and \eqref{EXC:eq:903} we conclude that
\begin{equation}\label{EXC:eq:910}
	\PP\brabig{ \cA_G(\bb a, \bb w ) } 
	\leq \PP\brabig{ \abs{\bb X} = \abs{E} }
	= O_{M} \bra{ \bra{ \frac{\log n}{\e n}}^{\abs{E}} }.
\end{equation}

Lastly, we exploit the assumption that $G$ contains a path of length $\geq 2$. It implies that $\abs{E} \geq \abs{F} + 1$, and in general we have $\abs{E}+ \abs{F} = N < 2M$. Hence, as long as $n$ is large enough, we have
\begin{equation}\label{EXC:eq:911}
	\bra{\frac{\log n}{\e n}}^{\abs{E}} \leq \bra{\frac{\log n}{\e n}}^{\abs{F}+1} \leq \bra{\frac{\log n}{\e}}^{M}/n^{\abs{F}+1},
\end{equation}
and \eqref{EXC:eq:75Z} follows by combining \eqref{EXC:eq:910} and \eqref{EXC:eq:911}.
\end{proof} 
\section{Lower bound}\label{section:LOW}

\subsection{} In this section we give a lower bound for the maximal possible number of distinct consecutive sums corresponding to a permutation of a given size. Throughout, as usual, $n$ denotes a positive integer and all instances of the $o(\cdot)$ notation refer to the limit $n \to \infty$.

It follows from Theorem \ref{thm:INT:B} that
\begin{equation}\label{eq:LOW:001}
	\max_{a \in \Sym([n])} \abs{S(a)} \geq \bra{ \frac{1+e^{-2}}{4} +o(1)} n^2,
\end{equation}
which is asymptotically better than the bound from Proposition \ref{I:prop:cexple-basic}. Presently, we prove a slightly stronger estimate
\begin{equation}\label{eq:LOW:002}
\max_{a \in \Sym([n])} \abs{S\bra{a})} \geq \bra{ \frac{3}{2}-\frac{2}{\sqrt{e}} + o(1)} n^2,
\end{equation}
which is the lower bound in Theorem \ref{thm:INT:A}.

We note that the numerical values of the two constants appearing in \eqref{eq:LOW:001} and \eqref{eq:LOW:002} are rather close: 
\[ \frac{3}{2}-\frac{2}{\sqrt{e}} = 0.286\dots, \qquad \frac{1+e^{-2}}{4} = 0.283\dots. \]
Our main goal here is to show that \eqref{eq:LOW:001} is not optimal, and we anticipate that further improvements are possible.

\subsection{}
In order to prove \eqref{eq:LOW:002}, we consider a randomised variant of the construction used in the proof of Proposition \ref{I:prop:cexple-basic}. We keep the constraint that sums of pairs of consecutive entries in the permutation should be constant, but we no longer insist on any particular ordering of these pairs. More precisely, throughout this section we let $\bb a$ denote a permutation of $[n]$ chosen uniformly at random subject to the constraint that 
\begin{equation}\label{eq:LOW:502}
\bb a_{i} + \bb a_{i+1} = n+1
\end{equation} 
for each odd integer $i$ with $1 \leq i < n$. Note that if $n$ is odd then necessarily $\bb a_{n} = (n+1)/2$; in this case, it will be convenient to additionally let $\bb a_{n+1}$ denote $(n+1)/2$, so that \eqref{eq:LOW:502} holds also for $i = n$.

\begin{remark}\label{rmrk:LOW:001}
The alert Reader will have noticed that we did not compute $\abs{S(a)}$ for the permutation $a$ appearing in the proof of Proposition \ref{I:prop:cexple-basic}; we merely obtained a lower bound. Finding precise estimates on $\abs{S(a)}$ is simple but mundane, and it does not lead to better bounds than \eqref{eq:LOW:002}, so we do not pursue this issue further.
\end{remark}

We will obtain an asymptotic formula for the number of distinct consecutive sums corresponding to the  the random permutation $\bb a$ we have just defined. The estimate \eqref{eq:LOW:002} is a direct consequence of the following result.

\begin{proposition}\label{prop:LOW:001}
	Let $n$ and $\bb a$ be as introduced above. Then
	\begin{equation}\label{eq:LOW:003}
		\EE \abs{S\bra{\bb a}} = \bra{ \frac{3}{2}-\frac{2}{\sqrt{e}} + o(1)} n^2.
	\end{equation}
\end{proposition}

We follow a strategy of proof which is roughly analogous to the proof of Theorem \ref{EXC:prop:P-s-in-S}, although the argument is significantly simpler.

We derive Proposition \ref{prop:LOW:001} from a statement concerning the probability that a single putative sum belongs to $S(\bb a)$. This is analogous to how Proposition \ref{EXP:prop:main} is derived from Proposition \ref{EXC:prop:P-s-in-S}. For an integer $s$ and a rational number $x$, we say that $s$ is divisible by $x$ if $x \neq 0$ and $s/x$ is an integer.

\begin{proposition}\label{LOW:prop:P-s-in-S}
Let $n$ and $\bb a$ be as introduced above. Let $s$ be an integer with $1 \leq s \leq \binom{n+1}{2}$ and put $\sigma = s/\binom{n+1}{2}$. If $s$ is divisible by $(n+1)/2$ then $s \in S(\bb a)$. Otherwise, 
	\begin{equation}\label{eq:LOW:004}
	\PP(s \not \in S(\bb a)) = e^{-\ffrac{(1-\sigma)}{2}} \sigma + o(1).
	\end{equation}
\end{proposition}

\begin{proof}[Proof of Proposition \ref{prop:LOW:001} assuming Proposition \ref{LOW:prop:P-s-in-S}]
It follows from Proposition \ref{LOW:prop:P-s-in-S} that
\begin{align*}
	\EE \abs{S(\bb a)} 
	 &= n^2\bra{ \frac{1}{2} + o(1) }\int_0^{1} \bra{1 - e^{-\ffrac{(1-\sigma)}{2}}\sigma} d\sigma 
	\\&= n^2 \bra{\frac{3}{2}-\frac{2}{\sqrt{e}} + o\bra{1} }.\qedhere
\end{align*}
\end{proof}

\subsection{} We devote the remainder of this section to proving Proposition \ref{LOW:prop:P-s-in-S}. From this point, fix the integer $s$ with $1 \leq s \leq \binom{n+1}{2}$. We can write $s$ in the form $s = l(n+1) + k$, where $0 \leq l < n/2$ and $1 \leq k < n+1$. We also put $\lambda = 2 l/n \in [0,1)$ and $\kappa = k/(n+1) \in [0,1)$. Note that, with the notation from Proposition \ref{LOW:prop:P-s-in-S}, $\sigma = s/\binom{n+1}{2} = \lambda + O(1/n)$. We will frequently use the basic observation that for any integers $u,v$ with $1 \leq u < v \leq n+1$ we have
\begin{equation}\label{eq:LOW:005}
\sum_{i=u}^{v-1} \bb a_i = 
\begin{cases}
	(v-u)(n+1)/2 & \text{ if } u \equiv 1 \bmod 2 \text{ and } v \equiv 1 \bmod 2, \\
	(v-u)(n+1)/2 + \bb a_{u} - \bb a_{v} & \text{ if } u \equiv 0 \bmod 2 \text{ and } v \equiv 0 \bmod 2, \\
	(v-u-1)(n+1)/2 + \bb a_u & \text{ if } u \equiv 0 \bmod 2 \text{ and } v \equiv 1 \bmod 2, \\
	(v-u+1)(n+1)/2 - \bb a_{v} & \text{ if } u \equiv 1 \bmod 2 \text{ and } v \equiv 0 \bmod 2.
\end{cases}
\end{equation}

We first address the case where $s$ is divisible by $(n+1)/2$. If $s$ is divisible by $n+1$ then $s = \sum_{i=1}^{2l} \bb a_i$ so $s \in S(\bb a)$. If $s$ is divisible by $(n+1)/2$ but not by $n$ then $n$ is necessarily odd and $s = \sum_{i=n-2l}^n \bb a_i$, so again $s \in S(\bb a)$. Hence, from now on we may assume that $s$ is not divisible by $(n+1)/2$, meaning that $k \neq 0,(n+1)/2$.

It follows from \eqref{eq:LOW:005} that if $s = \sum_{i=u}^{v-1} \bb a_i$ for some integers $u,v$ then $v-u \in \{ 2l, 2l+1, 2l+2\}$. Let $n_1 := \ceil{n/2}-l$, and for each $u$ with $1 \leq u \leq n_1$ let $\cA_u$ denote the event that $s$ is the sum of $2l$ consecutive entries of $\bb a$ starting at $2u$, that is
\begin{equation}\label{eq:LOW:010}
	\cA_u \iff s = \sum_{i=2u}^{2u+2l-1} \bb a_i \iff \bb a_{2u} - \bb a_{2u+2l} = k.
\end{equation}
Likewise, let $n_2 :=  \ceil{n/2}-l-1$ and for $u$ with $1 \leq u \leq n_2$ let $\cB_u$ be the analogously defined event for sums of length $2l+2$, that is
\begin{equation}\label{eq:LOW:011}
	\cB_u \iff s = \sum_{i=2u}^{2u+2l+1} \bb a_i \iff \bb a_{2u} - \bb a_{2u+2l+2} = k-(n+1).
\end{equation}
Finally, note that if $s$ is the sum of $2l+1$ consecutive entries of $\bb a$, that is, $s = \sum_{i=u}^{u+2l} \bb a_i$ for some $u$, then either $u$ is even and $a_u = k$ or $u$ is odd and $a_{u+2l} = k$.  We let
$\cC$ denote the event that $s$ is the sum of $2l+1$ consecutive entries of $\bb a$, that is $\cC$ holds if and only if the (unique) integer $i$ with $\bb a_i = k$ satisfies either $i \equiv 0 \bmod{2}$ and $1 \leq i \leq n-2l$, or $i \equiv 1 \bmod{2}$ and $2l < i \leq n$. Note that the latter condition is always true if $n$ is odd and $s$ is an odd multiple of $(n+1)/2$. 

The events introduced above are defined so that  $s \in S(\bb a)$ if and only if at least one of $\cA_u$ ($1 \leq u \leq n_1$), $\cB_u$ ($1 \leq u \leq n_2$) and $\cC$ is true. An elementary computation shows that
\begin{align*}
	\PP( \cA_u) &= \kappa/n + O(1/n^2)  & (1 \leq u \leq n_1), \\
	\PP( \cB_u) &= (1-\kappa)/n + O(1/n^2) & (1 \leq u \leq n_2),  \\
	\PP(\cC) &= 1-\lambda + O(1/n),
\end{align*}
We will derive Proposition \ref{LOW:prop:P-s-in-S} from the following approximate independence condition, much like we derived Proposition \ref{EXC:prop:P-s-in-S} from Proposition \ref{EXC:prop:P-wedge-s-in-S}.

\begin{proposition}\label{prop:LOW:101} Fix integers $K,L \geq 0$ and put $M = K+L$.
Let $n$, $\bb a$ and $s$ be as introduced above. Then
\begin{equation}\label{eq:LOW:020}
	\sum_{\abs{I} = K} \sum_{\abs{J}=L} \PP\braBig{ \bigwedge_{i \in I} \cA_i \wedge \bigwedge_{j \in J} \cB_j } = \frac{\kappa^K (1-\kappa)^L (1-\lambda)^M}{K! L! 2^M} + O_{M}\bra{\ffrac{1}{n}},
\end{equation}
where the sums are taken over all sets $I \subset [n_1]$, $J \subset[n_2]$ with cardinalities $K$ and $L$ respectively. Likewise,
\begin{equation}\label{eq:LOW:021}
	\sum_{\abs{I} = K} \sum_{\abs{J}=L} \PP\braBig{ \bigwedge_{i \in I} \cA_i \wedge \bigwedge_{j \in J} \cB_j \wedge \cC } 
	= \frac{\kappa^K (1-\kappa)^L (1-\lambda)^{M+1}}{K! L! 2^M} + O_{M}\bra{\ffrac{1}{n}}.
\end{equation}
Above, the constants implicit in the error terms depend only on $M$.
\end{proposition}
\begin{proof}[Proof of Proposition \ref{LOW:prop:P-s-in-S} assuming Proposition \ref{prop:LOW:101}] Let $N \geq 0$ be and odd integer.  Applying the Bonferroni inequality and substituting the bounds from Proposition \ref{prop:LOW:101} we obtain
\begin{align*}
\PP\bra{s \not \in S(\bb a)} &
\leq \sum_{M=0}^{N} \bra{-1}^{M}  \sum_{K+L = M} \sum_{\abs{I} = K}  \sum_{\abs{J} = L}  \PP\bra{\bigwedge_{i \in I} \cA_i \wedge \bigwedge_{j \in J} \cB_j} 
	\\& - 
	\sum_{M=0}^{N-1} \bra{-1}^{M}  \sum_{K+L = M} \sum_{\abs{I} = K}  \sum_{\abs{J} = L}  \PP\bra{\bigwedge_{i \in I} \cA_i \wedge \bigwedge_{j \in J} \cB_j \wedge \cC} 
\\ &= \sum_{M=0}^{N} \bra{-1}^{M} \sum_{K+L = M} \frac{\kappa^K (1-\kappa)^L (1-\lambda)^M}{K! L! 2^M} 
 \\ &- \sum_{M=0}^{N-1} \bra{-1}^{M} \sum_{K+L = M} \frac{\kappa^K (1-\kappa)^L (1-\lambda)^{M+1}}{K! L! 2^M} 
+ O_N\bra{\ffrac{1}{n}} 
 \\& = 
 \sum_{M=0}^{N} \bra{-1}^{M}  \frac{\lambda \bra{1-\lambda}^{M} }{2^M M!}
  + O\bra{\ffrac{1}{N!}} + O_N\bra{\ffrac{1}{n}} 
 \\& = e^{-\ffrac{(1-\lambda)}2} \lambda + O\bra{\ffrac{1}{N!}} + O_N\bra{\ffrac{1}{n}}, 
\end{align*}
where as usual we use the notation $O_N\bra{\cdot}$ to signify that the implicit constant is allowed to depend on $N$. Letting $N \to \infty$ slowly with $n$, we conclude that
\begin{equation}\label{eq:LOW:120}
\PP\bra{s \not \in S(\bb a)} \leq e^{-\ffrac{(1-\lambda)}{2}}\lambda + o(1),
\end{equation}
with the error term uniform with respect to the choice of $s$. A symmetric argument yields the inequality in the reverse direction.
\end{proof}

\subsection{} In order to finish the proof of Proposition \ref{prop:LOW:001} it remains to prove Proposition \ref{prop:LOW:101}. This task is naturally separated into two steps. Firstly, we obtain a uniform bound on the probabilities in \eqref{eq:LOW:020} and \eqref{eq:LOW:021}. Secondly, we obtain a more accurate estimate under suitable genericity conditions. We only prove \eqref{eq:LOW:020}; the proof of \eqref{eq:LOW:021} is analogous and requires no new ideas.

In the remainder of this section, we will use notation from Proposition \ref{prop:LOW:001}. Also, let $I \subset [n_1]$ and $J \subset [n_2]$ be sets with $\abs{I} = K$ and $\abs{J} = L$ and let 
\[ I = \{i_1 < i_2 < \dots < i_K\} \subset [n_1], \qquad J = \{j_1 < j_2 < \dots < j_L\} \subset [n_2] \]
be their increasing enumerations. We also put $i_m' := i_m + l$ ($1 \leq m \leq K$), $j_m' := j_m + l+1$ ($1 \leq m \leq L$) and
\[ I' := \{i'_1 < i_2' < \dots < i_K'\}, \qquad J' := \{j_1' < j_2' < \dots < j_L'\}.\]
Recall that for $i \in [n_1]$ and $j \in [n_2]$, the events $\cA_i$ and $\cB_j$ are determined by $\bb a_{2i}, \bb a_{2i+2l}$ and $\bb a_{2j}, \bb a_{2j+2l+2}$ respectively.
It will be convenient to encode the possible relations between the entries of $\bb a$ relevant for different events among $\cA_i$ ($i \in I$) and $\cB_j$ ($j \in J$) by introducing an (undirected) graph $G = G_{I,J}$ on the vertex set $V = I \cup J \cup I' \cup J'$ with edge set $E$ consisting of all pairs$\{i_m,i_m'\}$ ($1 \leq m \leq K$) and $\{j_m,j_m'\}$ ($1 \leq m \leq L$). 

\begin{lemma}\label{lem:LOW:002}
	With notation introduced above, \begin{equation}\label{eq:LOW:031}
	\PP\braBig{ \bigwedge_{i \in I} \cA_i \wedge \bigwedge_{j \in J} \cB_j } \leq 1/n^M + O_M(1/n^{M+1}).
\end{equation}
\end{lemma}
\begin{proof}
If $\PP\brabig{ \bigwedge_{i \in I} \cA_i \wedge \bigwedge_{j \in J} \cB_j } = 0$ then we are done, so suppose this is not the case. Then $I \cap J = \emptyset$, since if $\cA_u$ and $\cB_u$ both hold for some $u$ then 
\[
	s = \sum_{i=2u}^{2u+2l} \bb a_i =  \sum_{j=2u}^{2u+2l+2} \bb a_j, 
\]
which is absurd. By the same token, $I' \cap J' = \emptyset$. It follows that the corresponding graph $G = G_{I,J} = (V,E)$ is the union of a number of paths with $M$ edges in total. 

Let $F \subset V$ denote the set of endpoints (i.e.\ largest elements) of the paths comprising $G$. Given any sequence $(c_i)_{i \in F}$ taking values in $[n]$, one can construct a sequence $(c_i')_{i \in V \setminus F}$ such that if $\bb a_{2i} = c_i$ for all $i \in F$ and $\bigwedge_{i \in I} \cA_i \wedge \bigwedge_{j \in J} \cB_j$ holds then $\bb a_{2i} = c_i'$ for all $i \in V \setminus F$. Note also that $\abs{ V \setminus F} = M$. It follows that for each $c \in [n]^F$ with no repeating entries,
\[
\EE \bra{\PP\braBig{ \bigwedge_{i \in I} \cA_i \wedge \bigwedge_{j \in J} \cB_j } \ \middle| \ (\bb a_{2i})_{i \in F} = c} \leq 1/(n-\abs{F})^M.
\]
Since $c$ was arbitrary, we conclude that
\begin{align*}
	\PP\braBig{ \bigwedge_{i \in I} \cA_i \wedge \bigwedge_{j \in J} \cB_j } \leq 1/(n-\abs{F})^M,	
\end{align*}
from which \eqref{eq:LOW:031} readily follows.
\end{proof}

\begin{lemma}\label{lem:LOW:003}
	With notation introduced above, if the sets $I,I',J,J'$ are pairwise disjoint and $n,n+1 \not \in I' \cup J'$ then
\begin{equation}\label{eq:LOW:131}
	\PP\braBig{ \bigwedge_{i \in I} \cA_i \wedge \bigwedge_{j \in J} \cB_j } = \bra{\frac{\kappa}{n}}^K \bra{\frac{1-\kappa}{n}}^L + O_M(1/n^{M+1}).
\end{equation}
\end{lemma}
\begin{proof}
	Note that the disjointness of $I,I',J,J'$ is equivalent to the statement that $G_{I,J}$ is a union of disjoint paths of length $1$. For each integer $m$ with $1 \leq m \leq K$ and for each set $F \subset V \setminus \{i_m,i_m'\}$ and each sequence $c \in [n]^F$ with no repeating entries it holds that
\begin{equation}\label{eq:LOW:032}
	\PP\left( \cA_{i_m} \ \middle| \ (\bb a_{2i})_{i \in F} = c \right) = \frac{\kappa}{n} + O\bra{\abs{F}/n^2}.  
\end{equation}
By the same token, for $m$ and $F$ with $1 \leq m \leq L$ and $F \subset V \setminus \{j_m,j_m'\}$ $c \in [n]^F$ with no repeating entries we have
\begin{equation}\label{eq:LOW:033}
	\PP\left( \cB_{j_m} \ \middle| \ (\bb a_{2i})_{i \in F} = c  \right) = \frac{1-\kappa}{n} + O\bra{\abs{F}/n^2}.  
\end{equation}
 Let $I_0,J_0$ be any sets with $I_0 \subset I$ and $J_0 \subset J$, and let 
$\cD$ be an event that is equal to either $\cA_i$ for some $i \in I \setminus I_0$  or to $\cB_j$ for some $j \in J \setminus J_0$.
Combining the two bounds \eqref{eq:LOW:032} and \eqref{eq:LOW:033}, and 
recalling that each of the events $\cA_i,\cB_j$ is determined by two corresponding entries in $\bb a$, we conclude that
\begin{equation}\label{eq:LOW:034}
	\PP\Big(\cD \ \Big| \ \bigwedge_{i \in I_0} \cA_i \wedge \bigwedge_{j \in J_0} \cB_j \Big) = \PP(\cD) + O_M(1/n^2).
\end{equation}
Equation \eqref{eq:LOW:031} now follows by inductive application of \eqref{eq:LOW:034}.
\end{proof}

We now have all the ingredients necessary to finish the argument.

\begin{proof}[Proof of Proposition \ref{prop:LOW:001}]
	Among all possible choices of the pair of sets $I,J$ with $I \subset [n_1],J\subset [n_2]$ and $\abs{I}=K,\abs{J} = L$, pairs such that $n$ or $n+1$ belongs to $I' \cup J'$ or $I,I',J,J'$ are not disjoint constitute proportion $O_M(1/n)$. The number of all pairs $I,J$ as above is $\bra{ (1-\lambda)n/2 }^{M}/K!L! + O_M(1/n^{M+1})$. Proposition \ref{prop:LOW:001} now follows by estimating all sums in \eqref{eq:LOW:021} by Lemma \ref{lem:LOW:003} if applicable and Lemma \ref{lem:LOW:002} otherwise.
\end{proof} 
\section{Upper bound}\label{section:EXT-up}

\subsection{}
Results from previous sections suggest that distinct consecutive sums $\sum_{i=u}^{v-1} a_i$ tend to be rather numerous, where as usual $n$ denotes a large integer and $a \in \Sym([n])$ denotes a permutation. The trivial upper bound on the number of distinct consecutive sums is $\binom{n+1}{2}$, which happens to be both the upper bound for any single sum and the number of distinct intervals. It is natural to ask if this bound is asymptotically sharp, and it turns out that it is not. In this section we obtain a slight improvement, namely
\begin{equation}
\max_{a \in \Sym([n])} {S(a)} \leq \bra{\frac 14 + \frac{\pi}{16} + o(1)}{n^2},
\label{EXTup::eq:L-def}
\end{equation}
thus proving the upper bound in Theorem \ref{thm:INT:A}.

It will be helpful to consider the set of sums which are above average value, defined for $a \in \Sym([n])$ by
\begin{equation}
	L(a) := \bbra{ (u,v)  \ : \ 1 \leq u < v \leq n+1, \ \sum_{i=u}^{v-1} a_i > \frac{1}{2} \binom{n+1}{2} }.
	\label{EXTup:eq:La-def}
\end{equation}
	Our main idea is to show that $L(a)$ can never have size close to the trivial upper bound of $ \frac{1}{2} \binom{n+1}{2}$. The following proposition easily implies \eqref{EXTup::eq:L-def}.
	
\begin{proposition}\label{EXTup:prop:L(a)-bound}
	Let $n \geq 1$ be an integer and let $a \in \Sym([n])$. Then
	\[
		\abs{L(a)} \leq \bra{\frac{\pi}{16} + o(1)} n^2.
	\]
\end{proposition}
\begin{proof}[Proof of \eqref{EXTup::eq:L-def} assuming Proposition \ref{EXTup:prop:L(a)-bound}]
	Dividing $S(a)$ into the elements that are larger than $\frac{1}{2} \binom{n+1}{2}$ and those that are not, we easily find that
\begin{equation*}
\abs{S(a)} \leq \frac{1}{2} \binom{n+1}{2} + \abs{L(a)} \leq\bra{ \frac{1}{4} + \frac{\pi}{16} + o(1)}n^2.
\end{equation*} 
This is precisely the required bound.
\end{proof}

\begin{remark}
	The constant $\ffrac{\pi}{16}$ in Proposition \ref{EXTup:prop:L(a)-bound} cannot be improved, as shown by the ``tent map'' permutation:
	\[
		a_i = 
		\begin{cases}
		2 i  & \text{ if } i \leq \frac{n}{2}, \\
		2(n-i) + 1 & \text{ if }  i > \frac{n}{2}.
		\end{cases}
	\]
	This is essentially the only possible example, as will become clear in the course of the proof. However, essentially the same arguments as for the identity permutation, one can check that for the permutation $a$ defined above we have $S(a) = o(n^2)$; see also Example \ref{exple:END:bounded-complexity}. This leads us to believe that the upper bound in Theorem \ref{I:prop:upper-bound} is not sharp.
\end{remark}

\subsection{}
We will devote the remainder of this section to proving Proposition \ref{EXTup:prop:L(a)-bound}. To begin with, we reduce the problem to the case when the permutation can be partitioned into two monotonous parts. This part of the argument restricts the domain significantly, and will play an important role in enabling us to pass to a continuous version of the problem.

We will say that a sequence $(b_j)_{j=1}^m$ is \emph{bitonic} if there exists an integer $j$ with $1 \leq j_0 \leq m$ such that $b_j$ is increasing for $1 \leq j \leq j_0$ and decreasing for $j_0 \leq j \leq m$. Likewise, a function $f \colon [0,1] \to \RR$ is bitonic if there exists $x_0 \in [0,1]$ such that $f$ is increasing on $[0,x_0]$ and decreasing on $[x_0,1]$.  We call $j_0$ and $x_0$ \emph{bitonic points} for $b$ and $f$ respectively. Throughout, we take ``increasing''  to mean ``non-strictly increasing'' and likewise for ``decreasing''. The notions of a strictly bitonic sequence and a strictly bitonic sequence are defined analogously.

\begin{lemma}\label{EXTup:obs:perm-mono}
	Let $n \geq 1$ and $a \in \Sym([n])$. Then there exists $a' \in \Sym([n])$ such that  $\abs{L(a)} \leq \abs{L(a')}$ and $a'$ has a bitonic point $k$ with $\sum_{i=1}^k a_i', \sum_{i=k}^n a_i' \geq \frac{1}{2} \binom{n+1}{2}$.
\end{lemma}
\begin{proof}
	There exists unique $k$ with $1 \leq k \leq n$ such that $\sum_{i=1}^{k} a_i > \frac{1}{2} \binom{n+1}{2}$ and $\sum_{i=k}^{n} a_i \geq \frac{1}{2} \binom{n+1}{2}$. Note that $\sum_{i=1}^{k-1} a_i \leq \frac{1}{2} \binom{n+1}{2}$ and $\sum_{i=k+1}^{n} a_i < \frac{1}{2} \binom{n+1}{2}$.

	Consider the permutation $a'$ obtained from $a$ by sorting $a_1,\dots,a_k$ in increasing order and $a_{k+1},\dots,a_n$ in the decreasing order. More precisely, let $a'$ be such that 
\begin{align*}
\{ a_i' \ : \ 1 \leq i \leq k\} &= \{ a_i \ : \ 1 \leq i \leq k\} \text{ and } a_1' < \dots < a_k',\\
\{ a_i' \ : \ k < i \leq n\} &= \{ a_i \ : \ k < i \leq n\} \text{ and } a_{k+1}' > \dots > a_n'.
\end{align*}
Clearly, $k$ is a bitonic point of $a'$.
	
Take any $(u,v) \in L(a)$. By the choice of $k$, we have $u \leq k < v$. Hence,
	\[ 
		\frac{1}{2} \binom{n+1}{2} \leq \sum_{i=u}^{v-1} a_i 
		= \sum_{i=u}^{k} a_i + \sum_{i=k+1}^{v-1} a_i
		\leq \sum_{i=u}^{k} a_i' + \sum_{i=k+1}^{v-1} a_i' = \sum_{i=u}^{v-1} a_i'. 
	\]	
	Thus, $(u,v) \in L(a')$, and since $(u,v) \in L(a)$ was arbitrary, $L(a) \subseteq L(a')$. In particular, $\abs{L(a)} \leq \abs{L(a')}$.
\end{proof}

\subsection{}
We are now ready to introduce the continuous variant of the problem. The analogue of the space of all permutations of $[n]$ obeying the monotonicity condition in Lemma \ref{EXTup:obs:perm-mono} is the family $\cFm$ of measurable functions $f\colon [0,1] \to [0,1]$ obeying the following conditions:
\begin{enumerate}[label={$(\mathrm{F}_{\arabic*})$},ref={$\mathrm{F}_{\arabic*}$}]
	\item\label{EXTup:cond:01} for any measurable set $E \subset [0,1]$, $\int_{E} f(x)dx \geq \frac{1}{2} {\abs{E}^2}$ and $\int_0^1 f(x) dx = \frac{1}{2}$,
	\item\label{EXTup:cond:02} the function $f$ has a bitonic point $\kappa=\kappa_f$ with $\int_{0}^\kappa f(x) dx = \int_{\kappa}^1 f(x) dx = \frac{1}{4}$.
\end{enumerate}	
(Here and elsewhere, if $E \subset \RR$ is measurable then $\abs{E}$ denotes the Lebesgue measure of $E$.) Note that for a permutation $a$ and any index set $I$ we have $\sum_{i \in I} a_i \geq \binom{\abs{I}+1}{2}$, in analogy to condition \eqref{EXTup:cond:01}.

We will also occasionally need to use the larger family $\cF$ of functions $f\colon [0,1] \to [0,1]$ which only satisfy the condition \eqref{EXTup:cond:01} but not necessarily \eqref{EXTup:cond:02}. We note in passing that $\cF$ is convex, and both $\cFm$ and $\cF$ are closed in the $L^1$ topology; from this point, we endow $\cFm$ and $\cF$ with the topology induced from $L^1$. 

Another component needed for the continuous variant of the problem is an analogue of $L(a)$ from \eqref{EXTup:eq:La-def}. For any $f \in \cF$, let
\begin{equation}
	L(f) := \bbra{(x,y) \in [0,1]^2 \ : \ x \leq y,\ \int_x^y f(t) dt \geq \frac{1}{4} }.
\end{equation}
The continuous analogue of Proposition \ref{EXTup:prop:L(a)-bound} is the following statement.

\begin{proposition}\label{EXTup:prop:L(f)-bound}
	Suppose that $f \in \cFm$. Then $\abs{L(f)} \leq \frac{\pi}{16}$.
\end{proposition}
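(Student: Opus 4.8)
The plan is to set up the optimization problem $\max_{f \in \cFm} \abs{L(f)}$ and show that the maximum is $\frac{\pi}{16}$, attained (essentially uniquely) by the ``tent map'' $f(x) = x$ on $[0,\tfrac12]$ and $f(x) = 1-x$ on $[\tfrac12,1]$. First I would record the basic geometry: since $f$ is increasing on $[0,\kappa]$ and decreasing on $[\kappa,1]$, the set $L(f)$ can be described via the ``antiderivative'' $F(x) = \int_0^x f$, which is convex-then-concave; for each $x \le \kappa$ the section $\{y : (x,y) \in L(f)\}$ is an interval $[\phi(x), \psi(x)]$ (or empty), and similarly for $x > \kappa$ one can parametrize by the right endpoint. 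Writing $\abs{L(f)} = \int_0^1 \abs{\{y : \int_x^y f \ge \tfrac14\}}\,dx$ reduces everything to understanding, for each ``left cut'' $x$, how long an interval starting at $x$ is needed to accumulate mass $\tfrac14$, and symmetrically. Condition \eqref{EXTup:cond:01} (the constraint $\int_E f \ge \tfrac12\abs{E}^2$) is what forces $f$ not to be too concentrated, and it should be used to show that intervals cannot be ``too short,'' bounding the length of each section from above.

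The key step is a symmetrization/rearrangement argument. I expect one can show that among all $f \in \cFm$ with a fixed value of $\kappa$, the measure $\abs{L(f)}$ is maximized when the constraint \eqref{EXTup:cond:01} is tight in the relevant range, i.e. when $\int_0^x f(t)\,dt = \tfrac{x^2}{2}$ for $x \le \kappa$ (forcing $f(x) = x$ there by differentiation) and symmetrically $\int_x^1 f = \tfrac{(1-x)^2}{2}$ for $x \ge \kappa$. Here the point is that making $F$ smaller on $[0,\kappa]$ (equivalently, pushing mass of $f$ to the right within $[0,\kappa]$, which the monotonicity permits up to the tent shape) lengthens every section, hence only increases $\abs{L(f)}$; the extremal $f$ subject to $f$ increasing and $f(x) \le x$-type bounds from \eqref{EXTup:cond:01} is exactly $f(x) = x$. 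Then $\kappa = \tfrac12$ is forced by the normalization $\int_0^\kappa f = \tfrac14$. Once $f$ is pinned down to the tent map, computing $\abs{L(f)}$ is a direct, if slightly fiddly, integral: the condition $\int_x^y f \ge \tfrac14$ with $x < \tfrac12 < y$ becomes $\tfrac12 - \tfrac{x^2}{2} - \tfrac{(1-y)^2}{2} \ge \tfrac14$, i.e. $x^2 + (1-y)^2 \le \tfrac12$, whose area in the unit square is a quarter-disk of radius $\tfrac{1}{\sqrt2}$, namely $\tfrac{\pi}{16}$; the contributions from sections lying entirely within $[0,\kappa]$ or $[\kappa,1]$ vanish because there $\int_x^y f < \int_0^\kappa f = \tfrac14$.

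\textbf{Main obstacle.} The delicate part is making the rearrangement/monotonicity step rigorous. The family $\cFm$ is not convex (condition \eqref{EXTup:cond:02} is not preserved under averaging), so I cannot simply invoke convexity and extreme points directly; instead I would argue by an explicit perturbation within $\cFm$. Concretely: suppose $F(x_0) > \tfrac{x_0^2}{2}$ for some $x_0 < \kappa$. I would try to modify $f$ on $[0,\kappa]$ — shifting mass toward $\kappa$ while keeping $f$ increasing, $[0,1]$-valued, and keeping $\int_0^\kappa f = \tfrac14$ — in a way that does not violate \eqref{EXTup:cond:01} and does not decrease $\abs{L(f)}$, then iterate (or take a limit, using that $\cFm$ is $L^1$-closed and $f \mapsto \abs{L(f)}$ is upper semicontinuous) to reach the tent map. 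One must check that the ``hardest'' instance of constraint \eqref{EXTup:cond:01} to satisfy is indeed $E = [0,x]$ (a level-set / bathtub argument: since $f$ is monotone on each side, the worst $E$ for a fixed measure is an initial or terminal segment of $[0,\kappa]$ or $[\kappa,1]$), which reduces \eqref{EXTup:cond:01} to the pointwise inequalities $F(x) \ge \tfrac{x^2}{2}$ on $[0,\kappa]$ and the mirror on $[\kappa,1]$ — and the tent map makes these equalities, hence is feasible and extremal. I would also handle $\kappa \ne \tfrac12$ by noting that the optimal profile on each side is determined independently once the split $\int_0^\kappa f = \int_\kappa^1 f = \tfrac14$ is imposed, and then a short computation shows $\abs{L(f)}$ as a function of $\kappa$ is maximized at $\kappa = \tfrac12$ (or is simply constant, with the area computation going through verbatim after rescaling). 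Modulo these verifications, the bound $\abs{L(f)} \le \frac{\pi}{16}$ follows.
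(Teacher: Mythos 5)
Your plan goes wrong at the reduction of condition \eqref{EXTup:cond:01} to the pointwise inequality $F(x) \geq \frac{x^2}{2}$. The ``bathtub'' argument you invoke does not point to $E = [0,x]$ as the worst set: since $f$ is tent-shaped, the smallest values of $f$ sit near \emph{both} endpoints $0$ and $1$, so the set $E$ of fixed measure minimizing $\int_E f$ is a two-sided level set $[0,x]\cup[y,1]$, not an initial segment. These two-sided constraints are exactly the binding ones. For instance, if one fixes $\kappa=\tfrac12$ and keeps $f(x)=2(1-x)$ on $[\tfrac12,1]$, then taking $E=[0,x]\cup[1-x,1]$ shows that \eqref{EXTup:cond:01} forces $F(x) \geq x^2$ on $[0,\tfrac12]$ (with equality for the tent map), which is strictly stronger than $F(x)\geq x^2/2$; the ``smaller $F$ is always feasible and always better'' step therefore fails at the very first move. (The heuristic ``decreasing $F$ on $[0,\kappa]$ increases $\Lambda$'' is fine with $f|_{[\kappa,1]}$ frozen, but \eqref{EXTup:cond:01} couples the two sides, so the profiles cannot be optimized independently as your last paragraph asserts.)

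This misreading of the constraint propagates to the claimed extremizer. You name the extremal function as $f(x)=x$ on $[0,\tfrac12]$, $f(x)=1-x$ on $[\tfrac12,1]$, but this has $\int_0^1 f = \tfrac14 \neq \tfrac12$, so it is not even in $\cFm$; the correct tent map (stated in the paper's remark) is $f(x) = 2x$ on $[0,\tfrac12]$ and $f(x)=2(1-x)$ on $[\tfrac12,1]$, for which $F(x)=x^2$ rather than $x^2/2$. Relatedly, your final area computation is internally inconsistent: the condition $x^2+(1-y)^2\leq\tfrac12$ describes a quarter-disk of radius $\tfrac1{\sqrt2}$, whose area is $\tfrac\pi8$, not $\tfrac\pi{16}$. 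With the correct tent map one gets $\int_x^y f = \tfrac12 - x^2 - (1-y)^2$, hence $L(f)$ is the quarter-disk $x^2+(1-y)^2\leq\tfrac14$ of radius $\tfrac12$ and area $\tfrac\pi{16}$.

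More fundamentally, the key step of your proposal — showing that among admissible $f$ the tent map minimizes $F$ pointwise, by a mass-rearrangement within $[0,\kappa]$ — is the part that cannot be carried out as stated, and there is no obvious monotone quantity to iterate. The paper's proof takes a genuinely different route: it first establishes compactness of $\cFm$ (so an extremizer exists), then uses a first-variation formula for $\Lambda$ (Lemma~\ref{EXTup:lem:delta-Lambda}) together with swap-type perturbations to show the extremal $f$ is continuous, nowhere constant, measure-preserving in the sense $|f^{-1}(E)|=|E|$ (Lemma~\ref{EXTup:lem:constraints}), satisfies $v\circ\alpha + u\circ\beta \equiv 1$ and hence $\kappa_f=\tfrac12$ (Lemma~\ref{EXTup:obs:at+bt=cons}), and is symmetric $f(x)=f(1-x)$ (Observation~\ref{EXTup:obs:symmetry}); these facts together pin $f$ down to the tent map $2x$, after which the area computation is immediate. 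The variational machinery is precisely what replaces your heuristic rearrangement step, and I do not see a way to avoid something of that nature, since the extremizer is not characterized by making any single-sided constraint tight.
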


This bound is sharp. The (essentially unique) function $f \in \cFm$ with $\Lambda(f) = \frac{\pi}{16}$ will turn out to be the ``tent map'':
\[
		f(x) = 
		\begin{cases}
			2x &  \text{ if } x \leq \frac{1}{2},\\
			2(1-x) &  \text{ if } x \geq \frac{1}{2}.
		\end{cases}
\]

\subsection{}
We defer the proof of  Proposition \ref{EXTup:prop:L(f)-bound}; our immediate goal is to show that it implies Proposition \ref{EXTup:prop:L(a)-bound}. Before we do that, we make some preliminary observations which will be useful in the course of this deduction, as well as in the main body of the argument proving Proposition \ref{EXTup:prop:L(f)-bound}. For $f \in \cF$, define 

\begin{align}
		v_f(u) & := \sup\left\{ v \in [0,1] \ : \ \int_{u}^v f(x) dx \leq \frac{1}{4} \right\}, \label{eq:UP:def-of-v} \\
		u_f(v) & := \inf\left\{ u \in [0,1] \ : \ \int_{u}^v f(x) dx \leq \frac{1}{4} \right\}, \label{eq:UP:def-of-u} \\
		\Lambda(f) &:= \abs{L(f)}. \label{eq:UP:def-of-L}
\end{align}
Hence, if $f \in \cFm$ then $\int_{u}^{v_f(u)} f(x) dx = \frac{1}{4}$ for $u \in [0,\kappa_f]$ and $v_f(u) = 1$ for $u \in [\kappa_f,1]$, and similar relations hold for $u_f$.

\begin{lemma}\label{EXTup:obs:prelim}
	With definitions as above, the following are true.
	\begin{enumerate}
	\item\label{item:53:A} For any $f \in \cF$ we have $v_f, u_f \in L^1([0,1])$ and the maps $ f \mapsto v_f$ and $f \mapsto u_f$ from $\cF$ to $L^1([0,1])$ are continuous.
	\item\label{item:53:B} For any $f \in \cF$ and any $u_0,v_0 \in [0,1]$ with $\int_{u_0}^{v_0} f(x) dx = \frac{1}{4}$ we have
	\[ \Lambda(f) = \int_{0}^{u_0} (1-v_f(x)) dx + \int_{v_0}^1 u_f(x)dx- u_0 (1-v_0).\]
	\item\label{item:53:C} We have the formulas \[\Lambda(f) = \int_{0}^1 (1-v_f(u))du = \int_{0}^1 u_f(v) dv.\]
	\item\label{item:53:D} The map $ f \mapsto \Lambda(f)$ from $\cF$ to $\RR$ is continuous.
	\item\label{item:53:E} For any $f \in \cFm$ the set $L(f)$ is convex.
	\end{enumerate}
\end{lemma}	

\begin{proof}
	We begin with \eqref{item:53:A}. We only prove continuity of $f \mapsto v_f$, the argument for $f \mapsto u_f$ is analogous. Take any $f,f_n \in \cF$ with $f_n \to f$ in $L^1$. Fix $u$ and let $v = v_f(u)$. Let us suppose that $v < 1$, since the case $v = 1$ is easier. For any $\delta > 0$ we have
	\[
		\int_u^{v-\delta} f(x) dx + \frac{1}{4} \delta^2 < 
		\int_u^{v} f(x) dx = \frac{1}{4} < \int_u^{v+\delta} f(x) dx -  \frac{1}{4} \delta^2
	\]
	Thus, there exists $n_0 = n_0(\delta)$ such that for $n > n_0(\delta)$ we have	
	\[
		\int_u^{v-\delta} f_n(x) dx + \frac{1}{8} \delta^2 <   \frac{1}{4} < \int_u^{v+\delta} f_n(x) dx -  \frac{1}{8} \delta^2,
	\]
	and consequently $v-\delta < v_{f_n}(u) < v + \delta$. Taking $\delta \to 0$ we conclude that $v_{f_n}(u) \to v$ as $n \to \infty$. Hence, $v_{f_n} \to v_f$ pointwise, and since all relevant functions are bounded, also $v_{f_n} \to v_f$ in $L^1$ as $n \to \infty$

	 The integral formula \eqref{item:53:B} for $\Lambda(f)$ follows from partitioning $L(f)$ into three parts: $ L_- = L(f) \cap \{ u < u_0,\ v< v_0 \}$, ${L}_+ = {L}(f) \cap \{ u > u_0,\ v > v_0 \}$ and ${L}_* = {L}(f) \cap \{ u \leq u_0,\ v \geq v_0 \} = [0,u_0] \times [v_0,1]$.{ The formulas \eqref{item:53:C} are special cases of \eqref{item:53:B} with where $(u_0,v_0)$ is $(0,\kappa_f)$ or $(\kappa_f,1)$.} Continuity of $f \mapsto \Lambda(f)$ in \eqref{item:53:D} is a direct consequence of the previous points \eqref{item:53:A} and \eqref{item:53:C}. 
	
	Finally, we prove convexity of $L(f)$ in \eqref{item:53:E}. Suppose that $(u_1,v_1), (u_2,v_2) \in L(f)$ and let $u = \ffrac{\bra{u_1+u_2}}{2}, v= \ffrac{\bra{v_1+v_2}}{2}$. We may assume without loss of generality that $\int_{u_1}^{v_1}f(x)dx = \int_{u_2}^{v_2}f(x)dx = \frac{1}{4}$ (which implies that $u_1, u_2 \leq \kappa_f \leq v_1,v_2$) and that $u_1 \leq u_2$ (which implies that $v_1 < v_2$). Then 
	\[
		\int_{u_1}^{u_2}f(x)dx = \int_{v_1}^{v_2}f(x)dx =: I.
	\]
	and because of monotonicity of $f$ on the relevant intervals, we have 
	\[\int_{u_1}^{u}f(x)dx \leq \frac{1}{2} I \leq \int_{u}^{u_2}f(x)dx \quad \text{ and } \quad \int_{v_1}^{v}f(x)dx \geq \frac{1}{2}I \geq \int_{v}^{v_2}f(x)dx.\]
	It follows that
	\[  \int_{u}^{v}f(x)dx =  \int_{u_1}^{v_1}f(x)dx -  \int_{u_1}^{u}f(x)dx +  \int_{v_1}^{v}f(x)dx \geq \frac{1}{4},\]
	and consequently $(u,v) \in L(f)$. Since $L(f)$ is closed and $(u_1,v_1), (u_2,v_2) \in L(f)$ were arbitrary, this proves convexity.
\end{proof}

\begin{proof}[Proof of Proposition \ref{EXTup:prop:L(a)-bound} assuming Proposition \ref{EXTup:prop:L(f)-bound}]
	 For each $n \geq 1$, let $a^{(n)}$ denote the permutation of $[n]$ which maximizes $\abs{L(a^{(n)})}$.  By Lemma \ref{EXTup:obs:perm-mono} we may assume without loss of generality that $a^{(n)}_i$ are bitonic and have bitonic points $k^{(n)}$ with 
	 \begin{equation}\label{eq:EXC:953-a}	 
	 \sum_{i=1}^{k^{(n)}} a_i^{(n)} \geq \frac{1}{2} \binom{n+1}{2} 
	 \text{ and } \sum_{i=k^{(n)}}^n a_i^{(n)} \geq \frac{1}{2} \binom{n+1}{2}.
	 \end{equation} 	
	 	
	For any $n \geq 1$, we associate to $a^{(n)}$ the step function $f_n\colon[0,1] \to \RR$ defined by
\begin{equation}\label{eq:EXC:def-of-f_n}
 	f_n\bra{\frac{i + t}{n}} = \frac{a_i^{(n)}}{n+1} \quad \text{ for } i \in [n] \text{ and } t \in [-1,0), 
\end{equation} 	
and (for completeness) $f(1) = \frac{a_n^{(n)}}{n+1}$. 
	 It follows directly from the definition \eqref{eq:EXC:def-of-f_n} that we have the bounds $0 \leq f_n(x) \leq 1$ and the formula 
\begin{equation}\label{eq:EXC:953}	 
	 \sum_{i=u}^{v-1} a_i^{(n)} = n(n+1) { \int_{(u-1)/n}^{(v-1)/n} f_n(x) dx}
\end{equation} 	
	 for any $u,v$ with $1 \leq u \leq v \leq n+1$. In particular, $\int_0^1 f_n(x) dx = \frac{1}{2}$. 
	 
	 It is not difficult to see that for any measurable set $E \subset [0,1]$ with $\abs{E} = \ffrac{\bra{m + \mu}}{n}$ with $m \in \NN_0,$ and $\mu \in [0,1)$ we have 
	 \[\int_E f_n(x) dx \geq \frac{1 + 2 + \dots + m + (m+1)\mu }{n(n+1)} = \frac{(m+2\mu)(m+1)}{2n(n+1)} \geq \frac{\abs{E}^2}{2},\]
where the last inequality can be checked with elementary methods. It is also clear from the construction that $f_n$ is increasing on $\left[0,\ffrac{k^{(n)}}{n}\right)$ and decreasing on $\left[\ffrac{\bra{k^{(n)}-1}}{n},1\right]$, and in particular any $\kappa$ with $\ffrac{\bra{k^{(n)}-1}}{n} \leq \kappa < \ffrac{k^{(n)}}{n}$ is a bitonic point for $f_n$. It now follows from \eqref{eq:EXC:953-a} and \eqref{eq:EXC:953} that  $f_n \in \cFm$.

	 By Proposition \ref{EXTup:prop:L(f)-bound}, $\Lambda(f_n) \leq \frac{\pi}{16}$. It remains to relate $\Lambda(f_n)$ to $\abs{L(a^{(n)})}$. For any $u,v$ with $1 \leq u < v \leq n+1$ it follows from \eqref{eq:EXC:953} that
	 \[(u,v) \in L\left(a^{(n)}\right) \iff \left(\frac{u-1}{n}, \frac{v-1}{n} \right) \in L(f_n).\]
	 Hence, the number of points in $L(f_n)$ on the lattice $\frac{1}{n}\ZZ \times \frac{1}{n} \ZZ$ is precisely $\abs{L\left(a^{(n)}\right)}$. Since $L(f_n)$ is convex by Lemma \ref{EXTup:obs:prelim}.\eqref{item:53:E}, $ \Lambda(f_n) = \ffrac{\abs{L(a^{(n)})}}{n^2} + O(1/n)$, whence 
	\[
		\max_{a \in \Sym([n])} \abs{L\left(a\right)} = \abs{L\left(a^{(n)}\right)} \leq \Lambda(f_n)n^2 + O(n) \leq \bra{\frac{\pi}{16} + O(1/n)}n^2,
	\]
	which finishes the proof.
\end{proof}
	
\subsection{}
	The rest of this section will be devoted to proving Proposition \ref{EXTup:prop:L(f)-bound}. Our first step in that direction is to show that the supremum of $\Lambda(f)$ for ${f \in \cF}$ is realised by a function $f_{*}$ in $\cFm$. It is convenient to allow $f$ to range over the larger family $\cF$ to simplify perturbation arguments later on. 

\begin{lemma}
	There exists a function $f_{*} \in \cFm$ such that $\displaystyle \Lambda(f_{*}) = \sup_{f \in \cF} \Lambda(f)$.
\end{lemma}
\begin{proof}
	For any $f \in \cF$, there exists $g \in \cFm$ such that $\Lambda(g) \geq \Lambda(f)$. This follows from an argument essentially equivalent to the one in Lemma \ref{EXTup:obs:perm-mono}.
	Hence, it will suffice to show that the supremum $\sup_{f \in \cFm} \Lambda(f)$ is realised by some $f_{*}$, which (since $\Lambda$ is continuous by Lemma \ref{EXTup:obs:prelim}) in turn will follow once we show that  $\cFm$ is compact.

	Compactness of $\cFm$ is a direct consequence of the classical Helly's selection theorem, see e.g.\ \cite{Brunk-1956} for details. For a direct proof, consider any sequence $f_n \in \cFm$. Passing to a subsequence, we may assume that $f_n$ converges pointwise on $\mathbb{Q} \cap [0,1]$. By motonicity, $f_n$ converges pointwise a.e.\ to some function $f$. Thus, by the dominated convergence, $f_n$ converges in $L^1$. It is clear that $f \in \cFm$.
\end{proof}

Now that we know that there exists some $f_{*} \in \cFm$ which maximises $\Lambda$, we may study such $f_{*}$ more closely. Note that $f_{*}$ is only defined up to equality almost everywhere. Since $f_{*}$ is bitonic, we may without loss of generality assume that it is strongly upper semicontinuous, meaning that $f(x) = \limsup_{y \to x} f(y)$ for each $x \in [0,1]$. At this point, there is no guarantee that $f_{*}$ is unique (even up to equality almost everywhere); we fix the choice of $f_{*}$ until the end of this section. It comes as no surprise that the behaviour of $\Lambda(f_{*})$ under small distortions is relevant.

\begin{proposition}\label{EXTup:lem:delta-Lambda}
	Let $f \in \cFm$, and suppose that $h \in L^\infty([0,1])$ is such that $f+\tau h \in \cF$ for sufficiently small $\tau > 0$. For sufficiently small $\tau > 0$, let

\begin{align}
	\Delta^{\tau}_h \Lambda(f) &:= \Lambda\bra{f+\tau h} - \Lambda\bra{f},\\
	\delta_h \Lambda(f) &:= \lim_{\tau \to 0+} \frac{1}{\tau} \Delta^{\tau}_h \Lambda(f).
\end{align}
	Then the limit defining $\delta_h \Lambda(f)$ exists and 
	\begin{equation}\label{eq:UP:def-of-w-1}
		\delta_h \Lambda(f) = \int_{0}^1 h(x) w_f(x) dx, 
	\end{equation}
	where $w_f \colon [0,1] \to [0,\infty)$ is given by:
	\begin{equation}\label{eq:UP:def-of-w}		
	w_f(x) := 
	\begin{cases} 
		\int_{0}^x \ffrac{du}{f(v(u))} & \text{ if } x \leq \kappa_f, \\
		\int_{x}^1 \ffrac{dv}{f(u(v))} & \text{ if } x \geq \kappa_f,
	\end{cases}
	\end{equation}
	where $\kappa_f$ is the bitonic point of $f$ appearing in \eqref{EXTup:cond:02}. In particular, $w \colon [0,1] \to \RR_{\geq 0}$ is continuous and strictly bitonic with bitonic point $\kappa_f$.
\end{proposition}

\begin{proof}
We may assume without loss of generality that $\norm{h}_\infty \leq 1$. Following the convention suggested above, for small $\tau > 0$ we define

\begin{align*}
	(\Delta^{\tau}_h u_f)(x) &:= u_{f+\tau h}(x) - u_{f}(x),\
	&(\Delta^{\tau}_h v_f)(x) &:= v_{f+\tau h}(x) - v_{f}(x),
	\\
	(\delta_h u_f)(x) &:= \lim_{\tau \to 0} \frac{1}{\tau} (\Delta^{\tau}_h u_f)(x), 	
	&(\delta_h v_f)(x) &:= \lim_{\tau \to 0} \frac{1}{\tau} (\Delta^{\tau}_h v_f)(x).
\end{align*}
Since $f$ is fixed, we will suppress dependence on $f$, writing $\Delta^{\tau}_h u$, $\delta_h u$, $w$, etc.\ in place of $\Delta^{\tau}_h u_f$, $\delta_h u_f$, $w_f$ whenever ambiguity does not arise.

We have a trivial estimate $\abs{\Delta^{\tau}_h v (u)} \leq 2 \sqrt{ \tau}$ for each $u \in [0,1]$, which follows directly from the chain of inequalities
\begin{equation}\label{eq:UP:482}
\tau  \norm{h}_{1} \geq \abs{
\int_{v(u)}^{v(u)+\Delta^{\tau}_h v(u)} (f-\tau h)(x) dx }
\geq \frac{(\Delta^{\tau}_h v(u))^2}{2} - \tau \norm{h}_{1}.
\end{equation}
(Here and elsewhere, we use the convention that $\int_a^b \equiv -\int_b^a$ if $a > b$). In the same way, we have $\abs{\Delta^{\tau}_h u (v)} \leq 2 \sqrt{ \tau}$ for each $v \in [0,1]$. 

Let us now fix some $\mu < \kappa$. If $\tau$ is small enough, then for $0 \leq u \leq \mu$ we have that $v(u) + \Delta^{\tau}_h v(u) < 1$. For $u < \mu$ we have a refinement of \eqref{eq:UP:482}:

\begin{align}
	\label{EXTup:eq:360}
	\frac{1}{2} &= \int_{u}^{v(u) + \Delta^{\tau}_h v(u)} (f + \tau h)(x) dx 
	\\ &=
	\label{EXTup:eq:361} 
	\frac{1}{2} + \tau \int_u^{v(u) + \Delta^{\tau}_h v(u)} h(x) dx + 
	 \int_{v(u)}^{v(u) + \Delta^{\tau}_h v(u)} f(x) dx
\end{align}
Estimating the integral of $h$ from $v(u)$ to ${v(u) + \Delta^{\tau}_h v(u)}$ trivially, we conclude that
\begin{equation}\label{EXTup:eq:362}
 \frac{1}{\Delta^{\tau}_h v(u)} \int_{v(u)}^{v(u) + \Delta^{\tau}_h v(u)} f(x) dx
 = - \frac{\tau}{\Delta^{\tau}_h v(u)} \bra{
 \int_u^{v(u)} h(x) dx + O\bra{\sqrt{\tau}}
 }.
\end{equation}
	For a.e.\ $u \in [0,\mu]$, the expression on the left hand side tends to $f(v(u))$ as $\tau \to 0$ by the Lebesgue density theorem. Letting $\tau \to 0$ and $\mu \to \kappa$ we conclude that $\delta_h v(u)$ is well defined for a.e.\ $u \in [0,\kappa]$ and
	\begin{equation}
		\delta_h v(u) = \lim_{\tau \to 0}  \frac{\Delta^{\tau}_h v(u)}{\tau} = -\frac{\int_u^{v(u)} h(x) dx }{f(v(u))}. \label{EXTup:eq:54}
	\end{equation}
By a symmetric argument, for a.e.\ $v \in [\kappa, 1]$ we have
	\begin{equation}
		\delta_h v(u) = \frac{\int_{u(v)}^{v} h(x) dx }{f(u(v))}. \label{EXTup:eq:55}
	\end{equation}
Fix once again $0 < \mu < \kappa $ and put $\nu = v(\mu)$. It follows from Lemma \ref{EXTup:obs:prelim} that
\[
	\Lambda(f) = \int_0^{\mu} (1-v(u)) du + \int_{\nu}^1 u(v) dv - \mu (1-\nu).
\]
 Fix also $\e > 0$, sufficiently small that $\nu + \e < 1$. It follows from \eqref{EXTup:eq:360}-\eqref{EXTup:eq:361} (along similar lines as \eqref{EXTup:eq:362}) that for sufficiently small $\tau > 0$ we have the bound
  \[ \abs{ \Delta^{\tau}_h u(v)} \leq \frac{2\tau \norm{h}_1}{ f( \nu + \e)} \] 
 for $u \leq \mu$, and hence $\ffrac{ \Delta^{\tau}_h v(u)}{\tau} $ is uniformly bounded (for a given choice of of $\mu$; the bound does not depend on $\e$). Likewise,  $\ffrac{ \Delta^{\tau}_h u(v)}{\tau} $ if uniformly bounded for $v \geq \nu$.
We may now compute that

\begin{align*}
	\delta_h \Lambda(f) 
	&= \lim_{\tau \to 0} \frac 1\tau \Delta^{\tau}_h \Lambda(f) 
	\\ &= \lim_{\tau \to 0} 
	 \int_0^{\mu} - \frac{ \Delta^{\tau}_h v(u)}{\tau} du + \int_{\nu}^1 \frac{ \Delta^{\tau}_h u(v)}{\tau} dv + \mu \frac{ \Delta^{\tau}_h v(\mu) }{\tau} + O(\tau)
	\\&= -\int_0^{\mu} \delta_h v(u) du + \int_{\nu}^1 \delta_h u(v) dv + \mu \delta_h  v(\mu),
\end{align*}
where the last equality uses the dominated convergence theorem. Passing to the limit $\mu \to 0$ or $\mu \to \kappa$ we find simpler expressions:
\begin{equation}\label{EXTup:eq:56}
	\delta_h \Lambda(f) 
	= -\int_0^{\kappa} \delta_h v(u) du = \int_{\kappa}^1 \delta_h u(v) dv.
\end{equation}

Inserting \eqref{EXTup:eq:54} into the first equation of \eqref{EXTup:eq:56} and exchanging the order of integration, we conclude that
\begin{equation}\label{EXTup:eq:56-a}
	\delta_h \Lambda(f) = \int_0^\kappa \frac{\int_u^{v(u)} h(x) dx }{f(v(u))} du
	= \int_{0}^{1} h(x) \int_{u(x)}^{\min(x,\kappa)} \frac{du}{f(v(u))}dx.
\end{equation}
	Let $w(x)$ denote the value of the inner integral $\int_{u(x)}^x \ffrac{du}{f(v(u))}$. If $x \leq \kappa$ then $u(x) = 0$ so we obtain the sought formula $w(x) = \int_{0}^x \ffrac{du}{f(v(u))}$. The formula $w(x) = \int_{x}^1 \ffrac{dv}{f(u(v))}$ for $x \geq \kappa$ follows by a symmetric argument. Alternatively, the formula also follows from a change of variables $v = v( u)$ in \eqref{EXTup:eq:56-a} together with the observation that $\frac{dv}{du} = \frac{f(u)}{f(v)}$. 
\end{proof}

\subsection{}
	Using standard techniques, we can extract from Proposition \ref{EXTup:lem:delta-Lambda} above some strong structural information about the function $f_{*}$ minimising $\Lambda$. The argument is complicated by the fact that we need to account for a variety of pathological behaviour that $f_{*}$ may a priori exhibit. However, the key idea is simply to relate to each undesirable behaviour of $f_{*}$ a perturbation of $f_{*}$ which increases $\Lambda$.
	
	To avoid cluttering the notation, whenever $f_{*}$ appears in subscript we replace it with $*$; in particular, $\kappa_{*}$ denotes the bitonic point appearing in \eqref{EXTup:cond:02} and $w_{*}$ us the function given by \eqref{eq:UP:def-of-w} for $f=f_{*}$. When this does not lead to ambiguity, we dispense with the subscripts altogether.

\begin{lemma}\label{EXTup:lem:constraints}
	Let $f_{*}$ and $w_{*}$ be as introduced above. Then $f_{*}$ is continuous, strictly bitonic, measure preserving (in the sense that $\abs{f^{-1}_*(E)} = \abs{E}$ for every measurable $E \subset [0,1]$) and $f_{*}(0) = f_{*}(1) = 0$. Moreover, for each $x, y \in [0,1]$  we have $f_{*}(x) \geq f_{*}(y)$ if and only if $w_{*}(x) \geq w_{*}(y)$.
\end{lemma}
\begin{proof}
 The argument is separated into several steps.
	
	\textbf{Step 1.} \textit{If $f(x) > f(y)$ then $w(x) \geq w(y)$.}
	
	Suppose for the sake of contradiction that $w(x) < w(y)$. Since $w$ is continuous, there exists some $\e > 0$ such that $w(x') < w(y')$ for any $x',y' \in [0,1]$ with $\abs{x'-x}, \abs{y'-y} < \e$. Suppose for concreteness that $x < \kappa < y$ (the remaining cases being either fully analogous or considerably simpler), so that $f$ is increasing in a neighbourhood of $x$ and decreasing in a neighbourhood of $y$. We may assume that $\e$ is small enough that $x + \e < \kappa < y-\e$.
	
	Consider a function $g$ obtained by ``swapping'' the intervals $[x,x+\e]$ and $[y,y+\e]$ in $f$, defined by 
	
	\begin{align*}
		g(x + t) &= f(y + t) && \text{ for all } t \in [0,\e],& \\
		g(y + t) &= f(x + t)  &&\text{ for all } t \in [0,\e],& \\
		g(z) &= f(z) \qquad && \text{ for } z \in [0,x)\cup(x+\e,y)\cup(y+\e,1].& 
	\end{align*}
	
	It is clear that $g \in \cF$, since membership in $\cF$ is ``invariant under rearrangement'' in the sense that the condition $g \in \cF$ can be phrased purely in terms of the values $\abs{g^{-1}(E)}$ for $E \subset [0,1]$, measurable.
	Put $h = g - f$, so that $f + \tau h \in \cF$ for $\tau \in [0,1]$ because of convexity. Using Proposition \ref{EXTup:lem:delta-Lambda} (and the notation therein) we obtain:
\[
	\delta_h \Lambda(f) = \int_0^1 h(z)w(z) dz
	= \int_0^\e ( w(y+t)-w(x + t)) (  f(x+t) - f(y+t)) dt > 0,
\]
	where the last inequality holds since the expression under the integral is strictly positive. Hence, $\Lambda( f + \tau h) > \Lambda(f)$ for sufficiently small $\tau >0$, contradicting the choice of $f$.
	
	\textbf{Step 2.} \textit{The function $f$ is nowhere constant.}
	
	 Suppose for the sake of contradiction that there exists some $y_0 \in [0,1]$ such that $U := f^{-1}(\{y_0\})$ has positive measure, $|U| = s_0 > 0$. Condition \eqref{EXTup:cond:01} implies that $y_0 > 0$. Moreover, if $y_0$ were equal to $1$ then the same condition would imply
	 \[\frac{1}{2} = \int_{0}^1 f(x) dx \geq \frac{1}{2}(1-s)^2 + s = \frac{1}{2} + \frac{1}{2}s^2,\]
	  which is clearly impossible, whence $y_0 < 1$. 
	  
	  Let $h$ be any smooth test function with $\operatorname{supp} h \subset U$, $\norm{h}_{\infty} \leq 1$ and $\int_0^1 h(x) dx = 0 $. (Recall that $\operatorname{supp} h = \operatorname{cl}\{ x \in [0,1] \ : \ h(x) \neq 0 \}$.) 
	 We claim that $f + \tau h \in \cF$ for sufficiently small $\tau > 0$; in fact, it is enough that $\tau \leq \tau_0 := \min(s_0^2/100,y_0,1-y_0)$. It is clear that $0 \leq f + \tau h \leq 1$ (since $\tau \leq y_0,1-y_0$ and $\norm{h}_{\infty} \leq 1$). It remains to show that for any measurable $E \subset [0,1]$ with $\abs{E} > 0$ we have 
\begin{equation}\label{eq:UP:760}
	 \int_{E} f(x) dx + \tau \int_E h(x) dx \geq \frac{1}{2} \abs{E}^2.
\end{equation}
Take any $E$ and put $E_0 = E \setminus U$ and $E_1 = E \cap U$, $t_0 = \abs{E_0}$, $t_1 = \abs{E_1} \in [0,s_0]$. Since $f \in \cF$, for each $s \in [0,s_0]$ we have
\begin{equation}\label{eq:UP:761}
	P(s) := \int_{E_0} f(x) dx + s y_0 - \frac{1}{2}\bra{s+t_0}^2 \geq 0
\end{equation}
The discriminant $\Delta_P$ of the quadratic polynomial $P$ is positive and $\Delta_P \geq \frac{1}{4}s_0^2$ (since $P$ has two real roots roots, which are $\geq s_0$ apart). It is also elementary to verify that $P(s) + \frac{1}{2}P'(s)^2 = \frac{1}{2}\Delta_P$. Define $H(s) = \inf\left\{ \int_F h(x) dx \ : \ \abs{F} = t,\ F \subset U\right\}$ for $s \in [0,s_0]$. Then $H$ is smooth, convex, $H(0) = H(s_0) = 0$, $\norm{H'}_{\infty} \leq 1$ and \eqref{eq:UP:760} will follow once we show that 
\begin{equation}\label{eq:UP:762}
P(s) + \tau H(s) \geq 0 \text{ for all } s \in [0,s_0].
\end{equation}
(in fact, we only need the case $s = t_1$). Let $s_1 \in [0,s_0]$ be a point where $P(s) + \tau H(s)$ takes the minimal value. Since \eqref{eq:UP:762} clearly holds for $s \in \{0,s_0\}$, we may assume that $s_1 \in (0,s_0)$. Then $s_1$ is a local minimum, so $P'(s_1) + \tau H'(s_1) = 0$, whence $\abs{P'(s_1)} \leq \tau$, which in turn implies that
\begin{equation*}\label{eq:UP:763}
P(s_1) + \tau H(s_1) \geq \frac{1}{8} s_0^2 - \frac{3}{2} \tau \geq 0.
\end{equation*}

This finishes the proof that $f+\tau h \in \cF$ for sufficiently small $\tau > 0$. Note that also $f-\tau h \in \cF$, since we may replace $h$ with $-h$. Letting $\tau \to 0$ and using the fact that $f$ maximises $\Lambda$, we conclude that $\delta_h \Lambda(f) = 0$. Hence, by Proposition \ref{EXTup:lem:delta-Lambda} we have $\int_U h(z)w(z) dz = 0$. Since $h$ is arbitrary (subject to the constraint mentioned above) and $w$ is continuous, this is only possible if $w$ is constant on $U$. On the other hand, we also know that $w$ is strictly bitonic, which leads to contradiction and finishes the argument.

\textbf{Step 3.} \textit{For any $t \in [0,1]$, we have $\displaystyle \inf_{\abs{E}=t} \int_{E} f(x) dx = \frac{1}{2} {t^2}$, where the infimum is taken over all measurable sets $E \subset [0,1]$ of the specified measure.}
				
	Note that inequality in one direction follows directly from the fact that $f \in \cF$. It remains to prove that for each $t \in [0,1]$ there exists a measurable set $E \subset [0,1]$ such that $\abs{E} = t$ and $\int_{E} f(x) dx \leq \frac{1}{2}t^2$. 

 	Suppose for the sake of contradiction that $\int_{E_0} f(x) dx > \frac{1}{2}t_0^2$ for some $t_0 \in [0,1]$, where $E_0$ is chosen so that the integral $\int_{E_0} f(x) dx$ is minimised subject to $\abs{E_0} = t_0$. From bitonicity of $f$ it follows that $E_0$ takes the form $E_0 = [0,x_0] \cup [y_0,1]$ for some $x_0,y_0$ such that $0 \leq x_0 \leq \kappa \leq y_0 \leq 1$. Because $f$ is nowhere constant by Step 2, the choice of $x_0,y_0$ is unique.
 	
 	Let $t_1 > t_0$ be such that $\int_{E_0} f(x) dx = \frac{1}{2}t_1^2$, and let $E_1$ be the set minimising $\int_{E_1} f(x) dx$ subject to $\abs{E_1} = t_1$. For the same reasons as above, there are unique $x_1,y_1$ such that $E_1 = [0,x_1] \cup [y_1,1]$. Put $F = E_1 \setminus E_0 = (x_0,x_1] \cup [y_1, y_0)$, and let $U$ be a non-empty open set with $U \subset F$. Since (by Step 2) for any $x \in U$ and $x' \in E_0$ we have $f(x) > f(x')$, one can show using the same techniques as in previous steps that for any smooth test function $h$ with $\operatorname{supp} h \subset U$, $\norm{h}_\infty \leq 1$ and $\int_0^1 h(x)dx = 0$ we have we have $f + \tau h \in \cF$ for all $\tau$ sufficiently small in absolute value. Hence, $\int_0^1 h(x) w(x)dx = 0$, and since $w$ is nowhere constant and $h$ is arbitrary, we reach a contradiction.
 
\textbf{Step 4.} \textit{The function $f$ is measure preserving.} 
	
	Let $g \colon [0,1] \to [0,1]$ be the increasing rearrangement of $f$, i.e., the unique (up to equality almost everywhere) increasing function such that $g^{-1}(E) = f^{-1}(E)$ for any measurable $E \subset [0,1]$. Then $f$ is measure preserving if and only if $g$ is measure preserving. By Step 3, for any $t \geq 0$ we have 
	\[
		\int_{0}^t g(x) dx = \inf_{\abs{E} = t} \int_E f(x) dx = \frac{1}{2} t^2.
	\]
	Differentiating and using the fact that $g$ is increasing, we conclude that $g(x) = x$ for all $x \in [0,1]$; in particular, $g$ is measure preserving.

\textbf{Step 5.} \textit{The function $f$ is continuous.}
	
	Suppose for the sake of contradiction that $f$ were not continuous. Because $f$ is bitonic and chosen so that for each $x \in [0,1]$ we have $f(x) = \limsup_{y \to x} f(y)$, it follows that $f$ has a jump discontinuity at some point $x_0$. Assume without loss of generality that $x_0 \leq \kappa$, and let 
	\[t^+ = f(x_0) = \limsup_{x \to x_0} f(x) >  \liminf_{x \to x_0} f(x) = t^-.\]
Since $f$ is measure preserving by Step 4 and $f$ is bitonic, $f^{-1}((t^-,t^+))$ in an interval of length $t^{+}-t^{-}$ contained in $[\kappa,1]$. Because $w$ is continuous, it follows from Step 1 that $w(y) = w(x_0)$ for all $y \in [0,1]$ with $t^- \leq y \leq t^{+}$. This is a contradiction, since we know that $w$ is nowhere constant.

\textbf{Step 6.} \textit{The lemma holds true.}

	The proof is essentially finished. We have shown that $f$ is continuous and measure preserving in Steps 5 and 4 respectively. We already know that $f$ is bitonic so it is strictly bitonic by Step 2. By Step 1, the condition
\begin{equation}\label{eq:UP:054}
	f(x) \geq f(y) \iff w(x) \geq w(y)
\end{equation}	
	holds for all pairs $(x,y) \in [0,1]^2$ with $f(x) \neq f(y)$. Because $f$ is nowhere constant, \eqref{eq:UP:054} holds for almost all pairs $(x,y) \in [0,1]^2$. Because $f$ and $w$ are continuous, the set of such pairs $(x,y)$ satisfying \eqref{eq:UP:054} is closed. Combining these two facts we conclude that \eqref{eq:UP:054} holds for all $(x,y) \in [0,1]^2$. Finally, $f(0) = f(1) = 0$ follows from the previous considerations because $w(0)=w(1) = 0$.
\end{proof}

\subsection{}
For continuous, nowhere constant $f \in \cFm$ with $f([0,1]) = [0,1]$ and $f(0) = f(1) = 0$, we introduce local inverse functions $\alpha_f \colon [0,1] \to [0,\kappa_f]$ and $\beta_f\colon [0,1] \to [\kappa_f,1]$ so that 
\begin{equation}\label{eq:UP:def-of-a-b}
	f(\a_f(t)) = f(\b_f(t)) = t.
\end{equation}	
	When $f = f_{*}$, the corresponding functions are denoted simply by $\a_{*}$ and $\b_{*}$. Additionally, let $u_{*}$ and $v_{*}$ be the functions defined by \eqref{eq:UP:def-of-u} and \eqref{eq:UP:def-of-v} respectively with $f = f_*$. We have the following, somewhat unexpected, relation.
\newcommand{\til}{\tilde}

\begin{lemma}\label{EXTup:obs:at+bt=cons}
For any $t \in [0,1]$ we have $v_{*} \circ \a_{*}(t) + u_{*} \circ \b_{*}(t) = 1$. In particular, $\kappa_{*} = \frac{1}{2}$.
\end{lemma}
\begin{proof}
	Like before, we omit $*$ in the subscript. 
	 The claim is clearly true for $t = 1$, and the value of $\kappa$ follows from the remaining part of the statement by taking $t = 0$.
	
	It is a direct consequence of the fact that $f$ is measure preserving by Lemma \ref{EXTup:lem:constraints} that $\a$ and $\b$ are Lipschitz continuous with Lipschitz constant at most $1$. Moreover, it follows directly from \eqref{eq:UP:def-of-u} and the fact that $f$ is bitonic that $u$ is Lipschitz continuous on $[0,\kappa-\e]$ for any $\e > 0$ (with Lipschitz constant $\leq 1/f(v(\kappa-\e))$, dependent on $\e$). Likewise, $v$ is Lipschitz continuous on $[\kappa+\e,1]$.
	 Thus, $v \circ \a + u \circ \b$ is Lipschitz continuous on $[0,1 -\e]$ for any $\e > 0$. In particular, $v \circ \a + u \circ \b$ is absolutely continuous on $[0,1 -\e]$ for any $\e > 0$. 
	 
	 Letting $\e \to 0$ we see that to finish the proof it will suffice to show that 
\begin{equation}\label{EXTup:eq:43-z}
	 (v \circ \a + u \circ \b)'(t)=0
\end{equation}
for (Lebesgue-)almost all $t \in [0,1]$. 
A standard computation yields:
	\begin{equation}
		(u \circ \b)'(t) = \b'(t) \frac{ f \circ \b(t) }{f \circ u \circ \b(t)} = \frac{t \b'(t) }{f \circ u \circ \b(t)}, \qquad
		(v \circ \a)'(t) = - \frac{t \a'(t) }{f\circ v \circ \a(t)} \label{EXTup:eq:43-a}
	\end{equation}
	almost everywhere (where $\a'(t)$ and $\b'(t)$ are defined).	
An application of Lemma \ref{EXTup:lem:constraints} gives $w(\a(t)) = w(\b(t))$ (with $w$ defined in Proposition \ref{EXTup:lem:delta-Lambda}). Differentiating this equality leads to
	\begin{equation}
		\frac{\b'(t)}{f \circ u \circ \b(t)} = \frac{\a'(t)}{f\circ v \circ \a(t)} .
	\label{EXTup:eq:43-b}
	\end{equation}
	Combining \eqref{EXTup:eq:43-a} and \eqref{EXTup:eq:43-b} gives \eqref{EXTup:eq:43-z} and finishes the proof.
\end{proof}

	We are now ready to obtain the final piece of information we need about $f_*$, namely the symmetry.
	
\begin{proposition}\label{EXTup:obs:symmetry}
For any $s, t \in [0,1]$, if $\a_{*}(s) = {u_{*} \circ \b_{*}}(t)$ then also $\a_{*}(t) = {u_{*} \circ \b_{*}}(s)$. In particular, the function $f_{*}$ is symmetric: $f_{*}(x) = f_{*}(1-x)$ for all $x \in [0,1]$. 
\end{proposition}
\begin{proof}
	Like before, we omit $*$ in the subscript. 	
	We begin by proving the symmetry of $f$, assuming the former part of the claim. It will be enough to show that $\a(t) + \b(t) = 1$ for any $t \in [0,1]$. Take any $t \in [0,1]$, and let $s = f \circ u \circ \b(t)$, so that $\a(s) = {u \circ \b}(t)$. By assumption, $\a(t) = {u \circ \b}(s)$. We also have ${v \circ \a}(s) = v \circ u \circ \b(t) = \b(t)$. Hence, $\a(t) + \b(t) = {u \circ \b}(s) + {v \circ \a}(s) = 1$ by Lemma \ref{EXTup:obs:at+bt=cons}.
	
	For the remaining part of the argument, it will be convenient to introduce a pair of transformations $T_\a$ and $T_\b$ on $[0,1]$ given by $T_\a = f \circ {u \circ \b}$ and $T_\b = f \circ {v \circ \a}$. With this notation, it will suffice to show that $T_\alpha^2 = \id$, where $T_\a^2 := T_a \circ T_\a$ and $\id$ denotes the identity map on $[0,1]$. Indeed, if $t$ and $s$ are such that $\alpha(s) = u \circ \beta(t)$ then $s = T_\alpha(t)$ and $u \circ \beta (s) = \alpha \circ T^2_\alpha(t)$. We note several properties of these transformations.
	\begin{enumerate}
	\item\label{EXTup:cond:T1} $T_\a \circ T_\b = \id$ and $T_\b \circ T_\a = \id$;
	\item\label{EXTup:cond:T2} $\a \circ T_\a = u \circ \b$ and $\b \circ T_\b = v \circ \a$;
	\item\label{EXTup:cond:T2-a} $v \circ \a \circ T_\a = \b$ and $u \circ \b \circ T_\b = \a$; 
	\item\label{EXTup:cond:T4} $T_\a$ and $T_\beta$ are decreasing;
		
	\item\label{EXTup:cond:T3} ${ \a \circ T_\a^2(t) - \a \circ T_\a^2(t') } = { \b(t') - \b(t)}$  for any $t,t' \in [0,1]$.
	\end{enumerate}
	Assertions \eqref{EXTup:cond:T1}, \eqref{EXTup:cond:T2} and \eqref{EXTup:cond:T2-a} follow directly by substitution. For instance, 
	\[
		T_\a \circ T_\b = f \circ {u \circ \b} \circ f \circ {v \circ \a}
		=  f \circ {u} \circ {v \circ \a} =   f \circ{ \a} = t,
	\]
	where we use the facts that $\beta \circ f \circ v = v$, $u \circ v \circ \alpha = \alpha$ and $f \circ \alpha = \id$. The remaining equalities follow along similar lines.
	
	Assertion \eqref{EXTup:cond:T4} follows from known monotonicity properties of $f$, $\a$, $\b$ and $u$, $v$. If $t < t'$ then $\b(t) > \b(t')$, hence $u \circ \beta(t) > u \circ \beta(t')$ and $f\circ u \circ \beta(t) > f \circ u \circ \beta(t')$ (note that $f$ is strictly increasing in the relevant range). Hence, $T_\a$ is decreasing, and $T_\b$ is decreasing by \eqref{EXTup:cond:T1}.
	
	Assertion \eqref{EXTup:cond:T3} follows from Lemma \ref{EXTup:obs:at+bt=cons} and previously established properties, since for any $t,t' \in [0,1]$ we have
	
	\begin{align*}
	{ \a \circ T_\a^2(t) - \a \circ T_\a^2(t') } & \overset{\eqref{EXTup:cond:T2}}{=}
	{ u \circ \b \circ T_\a(t) - u \circ \b \circ T_\a(t') } 
	\\ &\overset{\eqref{EXTup:obs:at+bt=cons}}{=} { v \circ \a \circ T_\a(t') - v \circ \a \circ T_\a(t) }
	 \overset{\eqref{EXTup:cond:T2-a}}{=} { \b (t') - \b(t) },
	\end{align*}	
	
	 Suppose for the sake of contradiction that for some $t_0$ we have $T_\a^2(t_0) \neq t_0$. For concreteness, we may suppose that $T_\a^2(t_0) > t_0$, the other case being fully analogous (it is enough to run the same argument with $\a$ and $\b$ interchanged). 	Let us consider the consecutive iterates $t_n := T_\a^n(t_0)$ for $n \in \ZZ$ (for $n < 0$ we use $T_\beta = T_\a^{-1}$). 
	 
	It follows from \eqref{EXTup:cond:T4} and a straightforward induction that the sequence $(t_{2n})_{n \in \ZZ}$ is strictly increasing and $(t_{2n+1})_{n \in \ZZ}$ is strictly decreasing
	
	As a consequence of \eqref{EXTup:cond:T2}, for any $t,t' \in [0,1]$ we have 
\begin{equation}\label{eq:UP:734}
	0 = \int_{\a(t)}^{v\circ \a(t)}f(x)dx - \int_{\a(t')}^{v\circ \a(t')} f(x)dx 
	=  \int_{\a(t)}^{\a(t')}f(x)dx - \int_{\b \circ T_\b(t)}^{\b \circ T_\b(t')} f(x)dx,
\end{equation}
where we use the previously introduced convention that $\int_{a}^{b} = -\int_{b}^{a}$.
In particular, applying \eqref{eq:UP:734} with $t=t_n$ and $t' = t_{n+2}$ we may define
	\begin{equation}\label{eq:UP:def-of-I}
		I_n := { \int_{\a(t_n)}^{\a(t_{n+2})} f(x) dx } = { \int_{\b(t_{n-1})}^{\b(t_{n+1})} f(x) dx }.
	\end{equation}
Similarly, applying \eqref{EXTup:cond:T3} $t=t_{n-2}$ and $t' = t_{n}$ may define
\begin{equation}\label{eq:UP:def-of-l}
		l_n := {\a(t_{n+2}) - \a(t_{n}) } =  {\b(t_{n-2}) - \b(t_{n}) } > 0.
\end{equation}

Estimating the integrals in \eqref{eq:UP:def-of-I} and exploiting known monotonicity properties of $f$ and $t_n$ we obtain for any $n \in \ZZ$ the following system of inequalities: 
	
	\begin{align}
	l_{2n} t_{2n+2}  &\geq I_{2n} \geq l_{2n} t_{2n}, \label{eq:UP:131}
	&l_{2n+1} t_{2n+1}  &\geq I_{2n+1} \geq l_{2n+1} t_{2n+3}, \\
	l_{2n+1} t_{2n-1}  &\geq I_{2n} \geq l_{2n+1} t_{2n+1}, \label{eq:UP:132}
	&l_{2n+2} t_{2n+2}  &\geq I_{2n+1} \geq l_{2n+2} t_{2n}.
	\end{align}
Combining the inequalities in \eqref{eq:UP:131} and \eqref{eq:UP:132} in such a way as to eliminate the appearances of $I_n$ we obtain estimates for the ratios $l_n/l_{n+1}$; namely, for any $n \in \ZZ$,

	\begin{align}\label{eq:UP:140}
\frac{t_{2n-1}}{t_{2n}} &\geq \frac{l_{2n}}{l_{2n+1}} \geq \frac{t_{2n+1}}{t_{2n+2}},
& \frac{t_{2n+2}}{t_{2n+3}} &\geq \frac{l_{2n+1}}{l_{2n+2}} \geq \frac{t_{2n}}{t_{2n+1}}.
	\end{align}
From \eqref{eq:UP:140} we can in turn obtain bounds for the ratios $l_n/l_{n+2}$, namely

	\begin{align}\label{eq:UP:141}
	\frac{t_{2n-1}t_{2n+2}}{t_{2n}t_{2n+3}} &\geq \frac{l_{2n}}{l_{2n+2}} \geq \frac{t_{2n}}{t_{2n+2}},
& \frac{t_{2n-1}}{t_{2n+1}} \geq \frac{l_{2n-1}}{l_{2n+1}} \geq \frac{t_{2n-2}t_{2n+1}}{t_{2n-1}t_{2n+2}}.
	\end{align}	
In particular, $l_{2n}/t_{2n}$ is decreasing and $l_{2n+1}/t_{2n+1}$ is increasing with $n$. Hence, $l_{2n+1}/t_{2n+1}$ converges to a strictly positive limit (possibly equal to $+\infty$) as $n \to \infty$, while $l_{2n+1} \to 0$ (in fact, $\sum_{n = -\infty}^{n=+\infty} l_n < \infty$). It follows that 
	\begin{equation}\label{eq:UP:151}
	t_{2n+1} \to 0 \text{ as } n \to \infty.
	\end{equation}	
Following a symmetric argument, we also conclude that 
	\begin{equation}\label{eq:UP:152}
	t_{2n} \to 0 \text{ as } n \to -\infty.
	\end{equation}	

Relations \eqref{eq:UP:151} and \eqref{eq:UP:152} are impossible to reconcile with the monotonicity properties of $t_n$. To see this, note fist that there is no $n \in \ZZ$ such that $t_{n+2}$ lies between $t_{n}$ and $t_{n+1}$ (here and elsewhere, $x$ lies between $y$ and $z$ if either $y < x < z$ or $z < x < y$); indeed, if such $n$ existed then as simple inductive argument would show that $t_m$ lies between $t_{n}$ and $t_{n+1}$ for all $m \geq n+2$, which contradicts \eqref{eq:UP:151}. Secondly, there is no $n \in \ZZ$ such that $t_{n}$ lies between $t_{n+1}$ and $t_{n+2}$, since that would lead to a contradiction with \eqref{eq:UP:152}. Thus, for any $n \in \ZZ$, either $t_{n} < t_{n+1} < t_{n+2}$ or $t_{n} > t_{n+1} > t_{n+2}$. By induction of $n$ we conclude that $(t_{n})_{n \in \ZZ}$ is a monotonous sequence. This contradicts the previous observation that $t_0 < t_2$ while $t_1 > t_3$, and finishes the argument.
\end{proof}

\subsection{} 
The proof of the main result is now essentially finished.

\begin{corollary}\label{cor:UP:final}
	The function $f_{*} \in \cFm$ is the unique (up to equality a.e.) function maximising $\Lambda$ and is given by:
	\[
		f_{*}(x) = 
		\begin{cases}
			2x &  \text{ if } x \leq \frac{1}{2},\\
			2(1-x) &  \text{ if } x \geq \frac{1}{2}.
		\end{cases}
	\]
\end{corollary}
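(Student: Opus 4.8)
The plan is to feed all the structural constraints already extracted about a maximiser of $\Lambda$ into one short computation pinning $f$ down, and then evaluate $\Lambda$ on the resulting function. Recall that a maximiser exists, and --- since modifying a function on a null set changes neither membership in $\cFm$ nor the value of $\Lambda$, and since, by Steps~2 and~4 in the proof of Lemma~\ref{EXTup:lem:constraints}, any maximiser is already nowhere constant and continuous away from $\kappa_f$ --- we may fix a \emph{continuous}, nowhere-constant $f \in \cFm$ with $\Lambda(f) = \sup_{g \in \cF}\Lambda(g)$. We then have three ingredients at our disposal: first, $\abs{f^{-1}(E)} = \abs{E}$ for every measurable $E$ (condition \eqref{EXTup:cond:13meas@lem:cons} of Lemma~\ref{EXTup:lem:constraints}); second, $\kappa_f = \tfrac12$ (Lemma~\ref{EXTup:obs:at+bt=cons}); and third, $f(x) = f(1-x)$ for all $x$, equivalently $\beta_f(t) = 1 - \alpha_f(t)$ (Observation~\ref{EXTup:obs:symmetry}).

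The first step is to identify $f$. Fix $t \in [0,1]$. Since $f$ increases on $[0,\tfrac12]$, decreases on $[\tfrac12,1]$, and is nowhere constant, the sublevel set is $f^{-1}([0,t]) = [0,\alpha_f(t)] \cup [\beta_f(t),1]$, whence
\[
	t = \abs{f^{-1}([0,t])} = \alpha_f(t) + \bra{1 - \beta_f(t)} = 2\alpha_f(t),
\]
the last equality by symmetry. Thus $\alpha_f(t) = t/2$ for every $t$, i.e. $f(t/2) = t$, so $f(x) = 2x$ on $[0,\tfrac12]$; applying $f(x) = f(1-x)$ gives $f(x) = 2(1-x)$ on $[\tfrac12,1]$, which is exactly the tent map. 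For the uniqueness clause, the same structural results apply verbatim to \emph{any} maximiser $g \in \cFm$, so the identical computation forces $g$ to agree with the tent map outside a null set.

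The final step is the value $\Lambda(f) = \tfrac{\pi}{16}$, which simultaneously closes Proposition~\ref{EXTup:prop:L(f)-bound}. Writing $F(x) = \int_0^x f$, one has $F(x) = x^2$ for $x \le \tfrac12$ and $F(x) = \tfrac12 - (1-x)^2$ for $x \ge \tfrac12$; solving $F(v_f(u)) = F(u) + \tfrac14$ gives $v_f(u) = 1 - \sqrt{\tfrac14 - u^2}$ for $u \le \tfrac12$ and $v_f(u) = 1$ for $u > \tfrac12$. Inserting this into the formula $\Lambda(f) = \int_0^1 (1 - v_f(u))\,du$ from Observation~\ref{EXTup:obs:prelim} gives $\Lambda(f) = \int_0^{1/2}\sqrt{\tfrac14 - u^2}\,du = \tfrac{\pi}{16}$, the integral being a quarter of the disc of radius $\tfrac12$. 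I expect no serious obstacle: the heavy lifting sits in the earlier lemmas, and the only genuinely new content is the two-line deduction $\alpha_f(t) = t/2$ --- precisely where the rearrangement identity, the value $\kappa_f = \tfrac12$, and the symmetry are all used at once --- together with the routine quarter-circle integral. The one point to handle with care is that ``$\beta_f(t) = 1 - \alpha_f(t)$'' really is equivalent to the pointwise symmetry $f(x) = f(1-x)$, and that the structural lemmas are available for an arbitrary maximiser and not merely a conveniently chosen one.
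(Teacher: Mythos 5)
Your argument is correct, and it fills in a derivation the paper leaves implicit: after Lemma~\ref{EXTup:lem:constraints} (measure preservation and continuity away from $\kappa_f$), Lemma~\ref{EXTup:obs:at+bt=cons} ($\kappa_f=\tfrac12$), and Observation~\ref{EXTup:obs:symmetry} (pointwise symmetry), the paper simply states the corollary and then computes $\Lambda$ of the tent map, without spelling out the step that nails the formula for $f$. Your two-line deduction --- $t = \abs{f^{-1}([0,t])} = \alpha_f(t) + (1-\beta_f(t)) = 2\alpha_f(t)$, using measure preservation, strict monotonicity on each half, and $\beta_f = 1-\alpha_f$ from symmetry --- is exactly the intended closure, and the uniqueness clause is handled as you say by applying the same structural lemmas to an arbitrary maximiser. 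Two minor remarks: first, the quarter-circle evaluation of $\Lambda(f)=\tfrac{\pi}{16}$ is really the step \emph{after} the corollary in the paper's organisation, so it is extra rather than necessary for the statement you were asked to prove (though it is of course what the corollary is for); second, it is worth a clause noting that $f(0)=f(1)=0$ and $f(\kappa_f)=1$ follow from measure preservation together with continuity and strict monotonicity, since these facts are needed for $\alpha_f,\beta_f$ to be defined on all of $[0,1]$ and for the sublevel-set identity you use.
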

\begin{proof}
	For any $t \in [0,1]$ we have $\alpha_{*}(t) = 1-\beta_{*}(t)$ by Proposition \ref{EXTup:obs:symmetry} and $\alpha_*(t) + 1 - \beta_{*}(t) = t$ by Lemma \ref{EXTup:lem:constraints}. Hence, $\alpha_*(t) = t/2$ and $\beta_{*}(t) = 1-t/2$, meaning that $f(t/2) = f(1-t/2) = t$.
\end{proof}

\begin{proof}[Proof of Proposition \ref{EXTup:prop:L(f)-bound}]
The region ${L}(f_{*})$ is the quarter-circle given by 
\[{L}(f_{*}) = \bbra{ (x, y) \in [0,1]^2 \ : \ x^2 + (1-y)^2 \leq \frac{1}{4} }.\] 
This allows us to compute that $\Lambda(f_{*}) = \frac{\pi}{16}$, which is precisely the stated bound.
\end{proof}
 
\section{Closing remarks}\label{section:END}

\subsection{}
In the previous sections, we have obtained a fairly satisfactory understanding of $\abs{S(a)}$ for a randomly selected permutation $a$, as well as in the ``best case scenario'' where $a$ is chosen to maximize $\abs{S(a)}$. It is natural to also ask about the behaviour of $\abs{S(a)}$ in the ``worst case scenario'', when $a$ is chosen so as to minimize $\abs{S(a)}$. We now address this problem, but we ask more questions than we answer.

The best lower bound we are aware of can be obtained by an argument in \cite{Solymosi-2005} (also present in \cite{Balog-2014}), a variant of which we sketch below for the convenience of the reader.

\begin{proposition}\label{prop:END:001}
	For any $n \geq 1$ and any $a \in \Sym([n])$ it holds that $\abs{S(a)} \geq \ffrac{n^{3/2}}{4\sqrt{2}}$.
\end{proposition}

\begin{proof}
	For any integer $k$, consider the set $S^k(a)$ of the sums $s \in S(a)$ with $kn < s \leq (k+1)n$. Clearly, $S(a) = \bigcup_{k = 0}^{\infty} S^k(a)$, and the union is disjoint.
	
	Take any $k$ with $1 \leq k \leq \ffrac{(n+1)}{4}$. For any $u$ with $1 \leq u \leq n$, at least one of the sums $\sum_{i=u}^n a_i$ or $\sum_{i=1}^u a_i$ exceeds $ \frac{1}{2} \binom{n+1}{2} \geq k n $. For concreteness, suppose $\sum_{i=u}^n a_i > kn$ and let $v \leq n+1$ be the smallest integer such that $\sum_{i=u}^{v-1} a_i > k n$; the remaining case is entirely analogous. 
	
	By the choice of $v$, we have $\sum_{i=u}^{v-1} a_i \in S^{k}(a)$ and $\sum_{i=u+1}^{v-1} a_i  \in S^{k}(a)\cup S^{k-1}(a)$. Note that for any set $R \subset \NN$ we have $\abs{(R-R) \cap \NN} \leq \abs{R}^2/2$. Applying this with $R = S^k(a) \cup S^{k-1}(a)$ and recalling that the choice of $u$ with $1 \leq u \leq n$ was arbitrary, we conclude that 
	\begin{equation}\label{eq:END:001}
	\abs{S^k(a)} + \abs{S^{k-1}(a)} \geq \sqrt{2n}.
	\end{equation}
	Summing \eqref{eq:END:001} over $1 \leq k \leq \frac{n+1}{4}$ and using the fact that $\abs{S^0(a)} = n$ we obtain
	\[
	\abs{S(a)} 
	\geq \frac{n}{2} + \frac{1}{2} \sum_{k=1}^{\floor{\frac{n+1}4}} \bra{ \abs{S^k(a)} + \abs{S^{k-1}(a)}} 
	\geq \frac{n}{2} + \frac{1}{2} \floor{\frac{n+1}{4}} \sqrt{2n} \geq \frac{n^{3/2}}{4\sqrt{2}}.\qedhere 
	\]
\end{proof}

\subsection{}

In contrast to Proposition \ref{prop:END:001} essentially the best available upper bound for the least possible size of $\abs{S(n)}$ for $a \in \Sym([n])$ corresponds to the trivial permutation, for which we have 
\[\abs{S(\id_n)} = n^{2-o(1)}.\]
Slightly more generally, we have a similar result for permutations of ``bounded complexity''. For an integer $M \geq 1$, let us say that a permutation $a \in \Sym([n])$ has complexity at most $ M$ if there exists a partition $[n] = \bigcup_{j=1}^M I_j$ into disjoint intervals $I_j$ such that for each $1 \leq j \leq M$ there exist integers $b_j,c_j$ with $\abs{b_j} \leq M$  such that $a_i = i b_j + c_j$ and each $i \in I_j$. Because the result is rather standard, we only sketch the argument.

\begin{example}\label{exple:END:bounded-complexity}
	Fix an integer $M$. Let $a \in \Sym([n])$ be a permutation with complexity $\leq M$. 	
	Then
	\begin{equation}\label{eq:END:002}
	\abs{S(a)} = o(n^2) \text{ and} \abs{S(a)} = n^{2-o(1)},
	\end{equation} 
	where he implicit rates of convergence are allowed to depend on $M$.
\end{example}
\begin{proof}[Sketch of the proof]
	Pick a partition $[n] = \bigcup_{j=1}^M I_j$ and integers $b_j, c_j$ ($1 \leq j \leq M$) as in the definition of complexity above.
	
	For the lower bound, it is enough to notice that one of the intervals $I_j$ ($1 \leq j \leq M$) has length $\geq n/M$. For any $u,v$ with $\min I_j \leq u < v \leq \max I_j+1$, $S(a)$ contains the sum
	\begin{equation}\label{eq:END:202}
	  s = \sum_{i=u}^{v-1} a_i = (v-u)\bra{c_j + (u+v-1)b_j/2 }.
	\end{equation}
	There are at least $n^2/2M^2$ choices of $u,v$ as above. Conversely, for each sum $s \in S(a|_{I_j})$, each pair $u,v$ such that \eqref{eq:END:202} holds corresponds uniquely to a decomposition of $s$ as a product of two integers, whence there are at most $d(s)$ such pairs $u,v$, where $d(s)$ is the divisor function. Since it is a well known fact that $\max_{s \leq n} d(s) =  n^{o(1)}$, we conclude that $\abs{S(a)} \geq n^{2-o(1)}$.
	
	For the upper bound, it suffices to show that for any $j,k$ with $1 \leq j,k \leq M$ the number of distinct sums $\sum_{i=u}^{v-1} a_i$ with $u \in I_j$ and $v-1 \in I_k$ is $o(n^2)$. Fix a choice of $j,k$ and put $u_0 = \max I_j+1$, $v_0 = \min I_k$, $u = u_0 - x$, $v = v_0 + y$ then 
	\begin{equation}\label{eq:END:004}
	 \sum_{i=u}^{v-1} a_i = Ax^2 + By^2 + Cx + Dy + E,
	\end{equation} 
where $A = \ffrac{b_j}{2}$, $B = \ffrac{b_k}{2}$, and $C,D,E$ are some constants with $C,D = O_M(n)$. Hence, the problem reduces to showing that for any polynomial $P(x,y) = Ax^2 + By^2 + Cx + Dy + E$ subject to the above constraints we have
\begin{equation}\label{eq:END:003}
	\abs{\bbra{ P(x,y) \ : \ 1 \leq x,y \leq n }} = o(n^2).
	\end{equation}
It is not difficult to reduce further to the case $C = D = E = 0$, $A,B$ are integers and $\gcd(A,B) = 1$. There are now two cases to consider, depending on whether $-AB$ is a square of an integer.

If $-AB$ is a square then $P(x,y)$ factors as $(A'x + B'y)(A'x -B'y)$ and the bound follows from the theorem of Erd\H{o}s on the multiplication table problem cited in the introduction.

Suppose now that $-AB$ is not a square. By Chebotarev's density theorem, there exists a family $\cP_0$ of primes with positive relative density in the set of all primes $\cP$ (i.e., $\liminf_{N \to \infty} \ffrac{\abs{\cP_0 \cap [0,N)}}{\abs{\cP \cap [0,N)}} > 0$) such that $-AB$ is not a square modulo $p$ for any $p \in \cP_0$. For any $p \in \cP_0$ we then have \[P(x,y) \not \equiv p,2p,\dots,(p-1) p \pmod{p^2}, \quad \text{ for all } x, y \in \ZZ.\]
Letting $w(n)$ be a sufficiently slowly increasing function and putting $K = \abs{A}+\abs{B}$ we conclude that

\begin{align*}
\abs{\bbra{ P(x,y) \ : \  x,y \in [n] }} &\leq \abs{\bbra{ P(x,y) \ : \ x, y \in \ZZ } \cap [-K n^2, K n^2] }  
\\  &\leq (2K+1) n^2 \prod_{\substack{ p \in \cP_0 \\ p \leq w(n) }} \bra{1-\frac{p-1}{p^2}} 
+ 2 \prod_{\substack{ p \in \cP_0 \\ p \leq w(n) }} p^2.
\end{align*}
As long $w(n) \to \infty$ and $w(n) = o(\log n)$, both summands above are $o(n^2)$.
\end{proof}

\subsection{}
Example \ref{exple:END:bounded-complexity} and Theorem \ref{thm:INT:B} show that the estimate $\abs{S(a)} = n^{2-o(1)}$ holds both for generic and highly structured permutations $a \in \Sym([n])$. This prompts the following question.

\begin{question}
	Is it true that $\min_{a \in \Sym([n])} \abs{ S(n)} = n^{2-o(1)}$? That is, is it true that for any $\delta > 0$ there exists $c_\delta > 0$ such that for any $n \geq 1$ the bound $\abs{S(a)} \geq c_\delta n^{2-\delta}$ holds for all $a \in \Sym([n])$?
\end{question}

In fact, all examples of $a \in \Sym([n])$ with $\abs{S(a)} = o(n^2)$ we are aware of exhibit some algebraic structure, much as in Example \ref{exple:END:bounded-complexity}. It is not the case that $\abs{S(a)}$ is minimised for the trivial permutation $\id_n$, but none of the examples known to the author are significantly worse. Hence, we ask a bolder question.

\begin{question}
	Does there exist an absolute constant $c > 0$ such that for any $n \geq 1$ the bound $\abs{S(a)} \geq c \abs{S(\mathrm{id}_n)}$ holds for all $a \in \Sym([n])$?
\end{question}

\subsection{} In a similar spirit, we may also ask if the only way for $S(a)$ to be small is if $a$ has some algebraic structure. To give an indication of just how much structure one may hope to find, we give the following examples. We omit the proofs, which are rather standard.

\begin{example}
	Fix a constant $M$, let $m \geq 1$ be an integer, and put $n = Mm$. Consider the permutation 
	\[a = (1,m+1,\dots,(M-1)m+1, 2,m+2,\dots,(M-1)m+2,3,\dots),\] that is, $a_{kM+l} = m(l-1) + k + 1$ where $0 \leq k < m$ and $1 \leq l \leq M$. Then $\abs{S(a)} = o(n^2)$ (the speed of convergence depends on $M$). 
\end{example}

Recall, however, that a similar-looking permutation considered in Proposition \ref{I:prop:cexple-basic} has at least $n^2/4$ distinct consecutive sums.

\begin{example}
	Take any permutation $a$ with $\abs{S(a)} = o(n^2)$, and let $m = o(n)$. Consider a permutation $b$ obtained from $a$ by choosing $m$ pairs of consecutive indices $i,i+1$ and swapping $a_i$ with $a_{i+1}$. More precisely, pick any set $I$ with $\abs{I} = m$ and such that $i \in I$ implies $i+1 \not \in I$, and define:
\[
	b_i = 
	\begin{cases}
		a_{i+1} & \text{ if } i \in I,\\
		a_{i-1} & \text{ if } i+1 \in I,\\
		a_{i} & \text{ otherwise}.\\		
	\end{cases}
\]
Then $\abs{S(b)} = \abs{S(a)} + O(mn) = o(n^2)$.
\end{example}

The examples above eliminate several conjectures one could make about permutations with few distinct consecutive sums. For instance, one cannot hope to show that such a permutation agrees with an affine sequence on an interval of length comparable to $n$. We can, however, hope that the following should have a positive answer. Recall that $\omega(1)$ denotes a quantity which tends to $\infty$ as $n \to \infty$.

\begin{question}
	Does there exist $\e > 0$ such that the following is true?
Let $n \geq 1$ and $a \in \Sym([n])$ with $\abs{S(a)} \leq \e n^2$. Then there exists an index set $I \subset [n]$ with $\abs{I} = \omega(1)$ and integers $b,c$ such that $a_i = b i + c$ for all $i \in I$.
\end{question}

\subsection{}
One may also ask a similar questions in a more general context. Let $a = (a_i)_{i=1}^n$ be an ordering of a set $A \subset \NN$ of size $\abs{A} = n$, not necessarily equal to $[n]$. Let $S(a)$ be, as introduced before, the set of sums of consecutive terms of $a$, that is
\[S(a) = \bbra{ \sum_{i=u}^{v-1} a_i \ : \ 1 \leq u < v \leq n+1}.\] How small can $\abs{S(a)}$ be?

It is perhaps more natural to phrase this question in different terms. For a set $B = \{b_i \ : \ 1 \leq i\leq m \}$ with $b_1 < b_2 < \dots < b_m$, define (following the terminology of \cite{Solymosi-2005} and \cite{Balog-2014}) the set of gaps 
\[D(B) = \{ b_{i+1} - b_{i} \ : \ 1 \leq i < m\}.\]
Note that setting 
$B = \left\{ \sum_{i=1}^{v-1} a_i \ : \ 1 < v \leq n+1 \right\} \cup \{0\}$ we can recover $S(a) = (B-B) \cap \NN$ and $D(B) = \{a_i \ : \ 1 \leq i \leq n \} = A$. 

\begin{question} For which $\delta > 0$ does there exist $c_\delta > 0$ such that the following holds?
	Let $n \geq 1$ and let $B \subset \NN$ with $\abs{B} = n$ and $\abs{D(B)} = n-1$.  Then $\abs{B-B} \geq c_\delta n^{2-\delta}$.
\end{question}

This question is already alluded to in \cite{Solymosi-2005}, and resolved positively in the case $\delta  = \frac{1}{2}$. 
However, for $\delta \in (0,\frac{1}{2})$, to the best of our knowledge, the answer is not known.  
\bibliographystyle{alphaabbr}

\bibliography{9-bibliography}

\end{document}